\theoremstyle{plain}
\newtheorem{theorem}{Theorem}
\newtheorem{fact}{Fact}
\newtheorem{lemma}{Lemma}
\newtheorem{corollary}{Corollary}
\theoremstyle{definition}
\newtheorem{definition}{Definition}
\newtheorem{remark}{Remark}
\newcommand{\bigcircle}{\begin{picture}(8,5)
\put(3.5,2.5){\circle{6.5}}
\end{picture}}
\newcommand{\h}{\begin{picture}(8,5)
\put(3.5,2.5){\circle{6.5}}
\qbezier(2,-0.4)(2,2.9)(2,5.4)
\qbezier(0,2.5)(3.4,2.5)(6.8,2.5)
\qbezier(5,-0.4)(5,2.9)(5,5.4)
\end{picture}}
\newcommand{\tr}{\begin{picture}(8,5)
\put(3,2.5){\circle{6.5}}
\qbezier(-0.6,2.5)(2.9,2.5)(6.4,2.5)
\qbezier(1.8,-0.6)(3.3,2.9)(4.8,5.4)
\qbezier(4.5,-0.6)(3,2.9)(1.5,5.4)
\end{picture}
}
\begin{document}
\title[Triple chords and strong (1, 2) homotopy]{Triple chords and strong (1, 2) homotopy}
\author{Noboru Ito}
\author{Yusuke Takimura}
\address{Waseda Institute for Advanced Study, 1-6-1 Nishi-Waseda Shinjuku-ku, Tokyo, 169-8050, Japan}
\email{noboru@moegi.waseda.jp}
\address{Gakushuin Boy's Junior High School, 1-5-1 Mejiro Toshima-ku Tokyo 171-0031 Japan}
\email{Yusuke.Takimura@gakushuin.ac.jp}
%\date{\today}
\keywords{Triple chords; knot projections; spherical curves; strong (1, 2) homotopy}
\thanks{MSC2010: Primary 57M25; Secondary 57Q35.}
\thanks{The work of N. Ito was partly supported by a Program of WIAS in the Program for the Enhancement of Research Universities, a Waseda University Grant for Special Research Projects (Project number: 2014K-6292) and the JSPS Japanese-German Graduate Externship.}
\maketitle
\begin{abstract}
A triple chord $\tr$ is a sub-diagram of a chord diagram that consists of a circle and finitely many chords connecting the preimages for every double point on a spherical curve.  This paper describes some relationships between the number of triple chords and an equivalence relation called strong (1, 2) homotopy, which consists of the first and one kind of the second Reidemeister moves involving inverse self-tangency if the curve is given any orientation.  We show that a prime knot projection is trivialized by strong (1, 2) homotopy, if it is a simple closed curve or a prime knot projection without $1$- and $2$-gons whose chord diagram does not contain any triple chords.  
%We also show that the number of triple chords of a curve is equivalent to $0$ (modulo $4$) if the curve is trivialized by strong (1, 2) homotopy.  
We also discuss the relation between Shimizu's reductivity and triple chords.  
\end{abstract}

%$\double$

\section{Introduction}
Sakamoto and Taniyama \cite{ST} characterized the sub-chord diagrams $\otimes$ (cross chord) and $\h$ ($H$ chord), embedded in a chord diagram associated with a generic plane curve, where a {\it{chord diagram}} is a circle with the preimages of each double point of the curve connected by a chord.  For example, a chord diagram of a plane curve contains $\h$, if and only if the plane curve is not equivalent to any connected sum of plane curves, each of which is either the simple closed curve $\bigcircle$, the curve that appears similar to $\infty$, or a standard torus knot projection \cite[Theorem 3.2]{ST}.  

This paper aims to obtain a similar characterization of the {\it{triple chord}} $\tr$, stated in Theorem \ref{main1}.  A {\it{knot projection}} is a generic spherical curve that is a regular projection image on $S^{2}$ of a knot.  For a knot projection $P$, a chord diagram $CD_P$ is defined as a circle with the preimages of each double point of the knot projection connected by a chord.  A knot projection is called {\it{prime}}, if it is not the connected sum of two knot projections.  
%A {\it{reduced}} knot projection is a knot projection without any reducible crossing as $d$ of Fig.~\ref{ych4}.  
%\begin{theorem}\label{main1}
Let $P^{r}$ be a unique knot projection with no $1$- or $2$-gons obtained by a finite sequence of the first and second Reidemeister moves always decreasing double points in an arbitrary manner for an arbitrary knot projection $P$ (for the uniqueness of $P^r$, see \cite{khovanov, IT}).  
%It is known that $P^r$ is uniquely determined for any $P$
\begin{theorem}\label{main1}
If the chord diagram $CD_P$ of a knot projection $P$ has no triple chord $\tr$, and $P^r$ is a prime knot projection or a simple closed curve, then there exists a finite sequence consisting of local replacements $1a$ and $s2a$ shown in Fig.~\ref{ch11} from a simple closed curve $\bigcircle$ to $P$.  
\begin{figure}[h!]
\includegraphics[width=4cm]{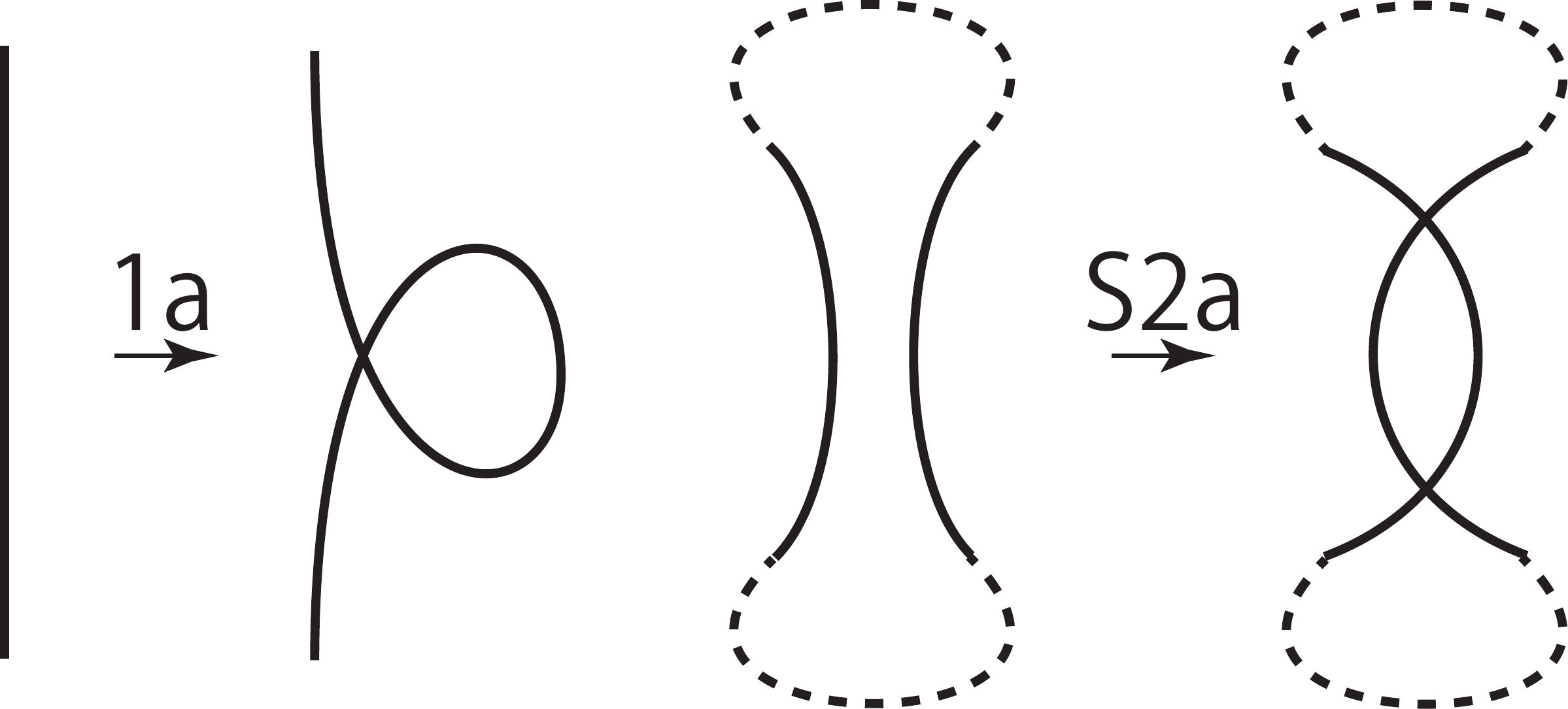}
\caption{Local replacements $1a$ (left) and $s2a$ (right).  Dotted arcs show the connections of non-dotted arcs.}\label{ch11}
\end{figure}
\end{theorem}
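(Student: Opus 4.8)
The strategy is to reduce the statement to the case $P^{r}=\bigcircle$ by classifying the possibilities for $P^{r}$, and then to build the required sequence of $1a$- and $s2a$-replacements by an induction in which the number of double points is allowed to grow. The two principal obstacles will be (i) the classification of prime, $1$- and $2$-gon-free projections whose chord diagram carries no triple chord, and (ii) the termination of that induction.

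First I would analyse $P^{r}$. By the definition and uniqueness of $P^{r}$ \cite{khovanov,IT}, $P$ is obtained from $P^{r}$ by a finite sequence of first and second Reidemeister moves. A subtlety is that a crossing-decreasing such sequence need not consist only of $1a$- and $s2a$-type moves, because reducing moves delete chords and re-glue the circle of the chord diagram, and this can turn a triple-chord-free pattern into one containing three mutually crossing chords (this already happens for small projections: a trefoil projection can appear en route when one reduces a figure-eight projection). So one cannot simply read the desired sequence off a minimal reduction. Instead I would (a) deduce from the hypothesis on $P^{r}$ that $CD_{P^{r}}$ is itself free of $\tr$, either because no single crossing-decreasing first or second Reidemeister move creates a triple chord, or by making non-minimal choices along the reduction, and (b) classify the $\tr$-free chord diagrams realisable by a prime spherical curve with no $1$- or $2$-gons. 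When such a diagram contains no $\h$, Sakamoto--Taniyama \cite[Theorem 3.2]{ST} identifies $P^{r}$ with a connected sum of copies of $\bigcircle$, the $\infty$-curve, and standard torus projections; primeness and the absence of $1$- and $2$-gons then leave only $P^{r}=\bigcircle$, since a standard $(2,k)$-torus projection with $k\ge 3$ has all of its chords pairwise crossing and hence contains $\tr$, while the $\infty$-curve has $1$-gons. When such a diagram contains an $\h$ but no $\tr$, a separate combinatorial argument is needed: its chord-intersection graph is triangle-free, while $1$- and $2$-gon-freeness forces every chord to cross at least two others and forbids the ``parallel'' patterns associated with removable crossings, and one must show that any such projection, if it occurs at all, is strongly $(1,2)$ homotopic to $\bigcircle$ by an explicit sequence. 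I expect this classification to be the first main obstacle.

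Next, granted that the problem is reduced to exhibiting a sequence of $1a$- and $s2a$-replacements from $\bigcircle$ to a $\tr$-free $P$, I would argue by induction on the number of double points of $P$. The base case $P=\bigcircle$ is trivial. For the inductive step: if $P$ has a $1$-gon, remove it by a replacement of type $1a$, which merely deletes an isolated chord of $CD_{P}$ and preserves $\tr$-freeness; if $P$ has a bigon, seek a strong ($s2a$-type) bigon whose removal creates no triple chord; and when no such move is available --- the figure-eight phenomenon noted above --- first perform one or more $s2a$-increases to manufacture such a bigon, the absence of $\tr$ in $CD_{P}$ guaranteeing there is enough room to do so, and only then reduce. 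The hard part is to make this ``go up, then come down'' step precise and to equip it with a complexity that strictly decreases, so that the induction terminates: the number of double points alone will not do, since it may be forced upward, so one needs an auxiliary quantity --- plausibly a lexicographic pair built from a count of $\tr$-free obstructions or from a reductivity-type invariant of the sort discussed later in the paper --- which the combined move strictly lowers. This termination is the second main obstacle.

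Finally I would concatenate the pieces: a sequence from $\bigcircle$ to $P^{r}$ (from the first step, and vacuous when $P^{r}=\bigcircle$), followed by a sequence from $P^{r}$ to $P$ (from the inductive construction, with each reduction performed in the $\tr$-preserving manner arranged in the first step), yielding a single sequence of $1a$- and $s2a$-replacements from $\bigcircle$ to $P$. The concatenation here, and the $1$-gon cases throughout, are routine; the substance lies in the two obstacles flagged above.
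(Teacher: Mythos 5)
Your proposal has genuine gaps at exactly the two points you flag, and the first one is fatal. The entire substance of Theorem \ref{main1} is the statement that a prime non-trivial knot projection with no $1$- or $2$-gons must contain a triple chord (the paper's Theorem \ref{main3}); this is what forces $P^{r}=\bigcircle$. Your reduction via Sakamoto--Taniyama disposes only of the case where the chord diagram contains no $\h$, and you explicitly leave the case ``contains $\h$ but no $\tr$'' as needing ``a separate combinatorial argument'' --- but that case is where all the work lies. The paper proves it by combining Shimizu's unavoidable-set result (Fact \ref{unavoidable}), a classification of $3$-gon types in which types $A$, $B$, $C$ immediately force a triple chord (Lemma \ref{basic_lem}, whose part (a) is the Euler-characteristic count of triangles), and an explicit case analysis of the remaining unavoidable configurations with a $D$-type $3$-gon (Cases A--D and Cases 1--32), repeatedly using Lemma \ref{lem_reduced} (primeness and no $1$-gons imply reducedness) to force the dotted arcs to intersect. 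Observations such as ``the chord-intersection graph is triangle-free and every chord crosses at least two others'' do not substitute for this; without an argument of this kind your conclusion $P^{r}=\bigcircle$ is unproven.

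Your second obstacle, by contrast, is a non-issue, and the worry that creates it is incorrect. A crossing-decreasing first or second Reidemeister move only deletes chords: the removed $1$-gon or $2$-gon edges contain no other preimages, so the cyclic order of the surviving endpoints is unchanged and $CD_{P^{r}}$ is a sub-chord diagram of $CD_{P}$. Hence reductions can never create a triple chord (your ``figure-eight phenomenon'' does not occur), triple-chord-freeness passes to every intermediate projection for free, and no ``non-minimal choices'' are needed. Consequently the ``go up, then come down'' induction with an auxiliary complexity --- whose termination you admit you cannot prove --- is unnecessary. The correct (and the paper's) step is simply to reverse any reduction of $P$ to $P^{r}=\bigcircle$, obtaining an increasing sequence of $1a$ and $2a$ moves from $\bigcircle$ to $P$: if some $2a$ were of type $w2$, the $2$-gon it creates has crossing chords and, by realizability on $S^{2}$, the two complementary dotted arcs must intersect (Fig.~\ref{tr3}), producing three pairwise crossing chords whose double points persist under all later increasing moves, contradicting the hypothesis that $CD_{P}$ has no $\tr$; hence every $2a$ is $s2a$. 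With that replacement the second half is routine, but the first gap still requires the paper's case analysis or an equivalent proof of Theorem \ref{main3}.
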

%\begin{theorem}\label{main2}
%If there exists a finite sequence consisting of local replacements $1$ and $s2$ shown in Fig.~\ref{ch12} between a knot projection $P$ and a simple closed curve $\bigcircle$, the number of triple chord $\tr$ in $CD_P$ is  equivalent to $0$ modulo $4$.  
We define {\it{strong (1, 2) homotopy equivalence}} as follows: two knot projections $P$ and $P'$ are strong (1, 2) homotopy equivalent, if and only if $P$ is related to $P'$ by a finite sequence consisting of local replacements $1$ and $s2$, as shown in Fig.~\ref{ch12}.  Corollary \ref{cor_main1} from Theorem \ref{main1} helps in understanding the relation between the triple chords and strong (1, 2) homotopy.  
\begin{corollary}\label{cor_main1}
If the chord diagram of an arbitrary prime knot projection $P$ with no $1$- or $2$-gons has no triple chord $\tr$ or $P$ is a simple closed curve, then $P$ is equivalent to a simple closed curve $\bigcircle$ under strong (1, 2) homotopy.  
\begin{figure}[htbp]
\includegraphics[width=4cm]{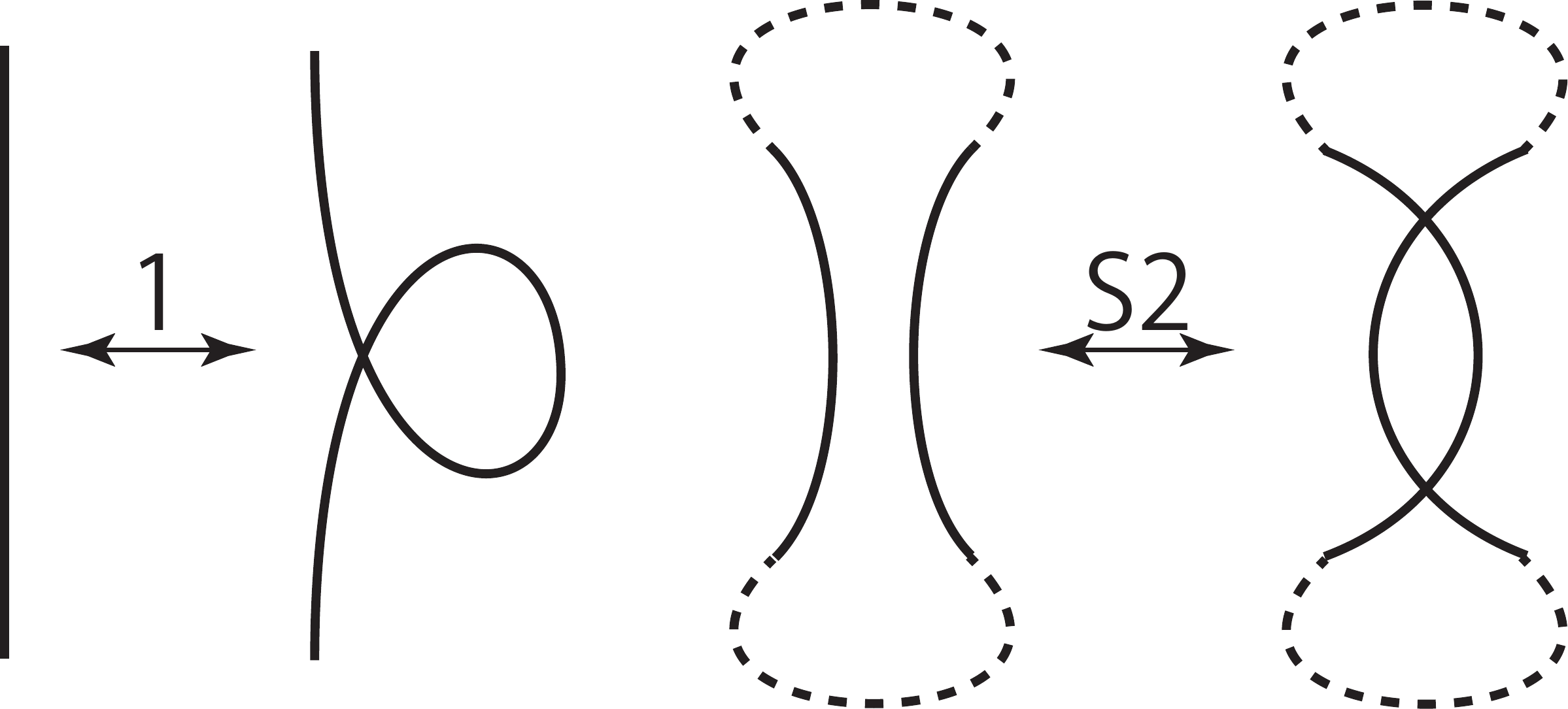}
\caption{Local replacements $1$ (left) and $s2$ (right).}\label{ch12}
\end{figure}
\end{corollary}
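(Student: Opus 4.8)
The plan is to obtain Corollary~\ref{cor_main1} as a formal consequence of Theorem~\ref{main1}; essentially no new geometry is needed. First I would treat the degenerate alternative: if $P$ is a simple closed curve, then $P=\bigcircle$ as a spherical curve and there is nothing to prove. So from now on assume $P$ is a prime knot projection having no $1$- or $2$-gons whose chord diagram $CD_P$ contains no triple chord $\tr$.

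Next I would identify $P^r$. Since $P$ has no $1$- or $2$-gons, no Reidemeister move that decreases the number of double points applies to $P$, so the sequence in the definition of $P^r$ is empty and $P^r=P$; in particular $P^r$ is a prime knot projection. Hence $P$ satisfies the hypotheses of Theorem~\ref{main1}, which yields a finite sequence of local replacements $1a$ and $s2a$ of Fig.~\ref{ch11} carrying $\bigcircle$ to $P$.

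Now I would compare Fig.~\ref{ch11} with Fig.~\ref{ch12}: each local replacement $1a$ is a particular instance of the local replacement $1$, and each local replacement $s2a$ is a particular instance of $s2$ (the replacement $s2a$ being $s2$ executed in the direction that creates the inverse-self-tangency $2$-gon). Reading the sequence produced by Theorem~\ref{main1} step by step therefore exhibits it as a finite sequence of local replacements $1$ and $s2$ from $\bigcircle$ to $P$. Since strong (1, 2) homotopy equivalence is, by definition, an equivalence relation --- in particular symmetric --- reversing this sequence gives a finite sequence of local replacements $1$ and $s2$ from $P$ to $\bigcircle$, so $P$ is strong (1, 2) homotopy equivalent to $\bigcircle$, as claimed.

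In this argument all the substance lies in Theorem~\ref{main1}; the remaining ingredients --- the equality $P^r=P$, the fact that the moves $1a$ and $s2a$ are instances of $1$ and $s2$, and the symmetry of the relation --- are routine. Thus the genuine obstacle is the proof of Theorem~\ref{main1} itself, not the passage to the corollary; the only point in the deduction deserving a second look is the verification, directly from the two figures, that $s2a$ is indeed an instance of the ``strong'' second Reidemeister move $s2$ appearing in Fig.~\ref{ch12}.
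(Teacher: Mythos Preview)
Your proposal is correct and matches the paper's approach: the paper offers no separate proof of Corollary~\ref{cor_main1}, presenting it simply as an immediate consequence of Theorem~\ref{main1}, and you have spelled out exactly that deduction (the trivial case, the identification $P^r=P$, and the observation that $1a$ and $s2a$ are instances of the moves $1$ and $s2$).
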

%\end{theorem}

The reminder of this paper contains the following sections.  Sec.~\ref{sec1.4} states our conventions.  Sec.~\ref{sec2} and Sec.~\ref{sec_main3} provide proofs of Theorems \ref{main1} and \ref{main3}, respectively.  
%Sec.~\ref{sec3} obtains a proof of Theorem \ref{main2}.  
%Sec.~\ref{sec4} givens a comparison between the triple chord and other types of chords.  
Sec.~\ref{sec4.1} mentions a relation between Shimizu's reductivity of knot projections and the triple chord.

\section{Preliminary}\label{sec1.4}
{\it{Reidemeister moves}}, which are three local replacements on an arbitrary knot projection, are defined by Fig.~\ref{tr1}.  It is known that there exists a finite sequence of Reidemeister moves between any two knot projections.  
\begin{figure}[h!]
\includegraphics[width=10cm]{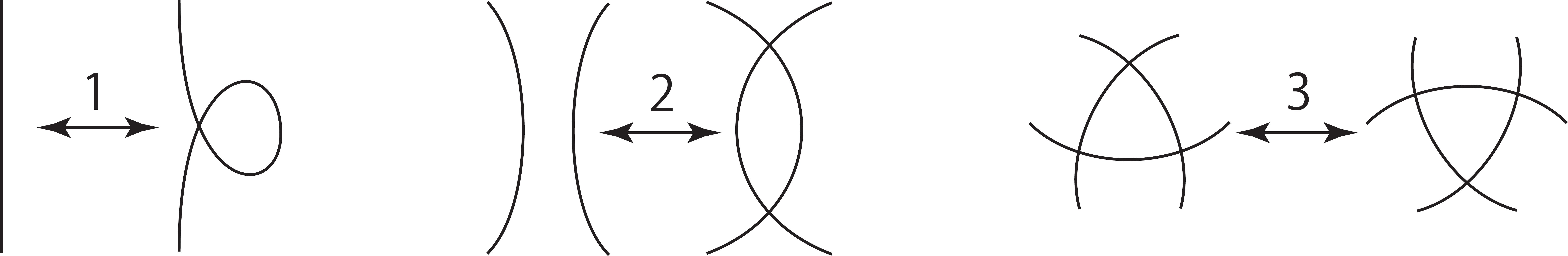}
\caption{First (left), second (center), and third (right) Reidemeister moves.}\label{tr1}
\end{figure}
Shown left to right in Fig.~\ref{tr1} are the first, second, and third Reidemeister moves.  
%This paper focus on the first and
There are two types of the second Reidemeister moves, local replacement, $s2$, shown in Fig.~\ref{ch12}, and $w2$, shown in Fig.~\ref{tr2}.  
\begin{figure}[h!]
\includegraphics[width=4cm]{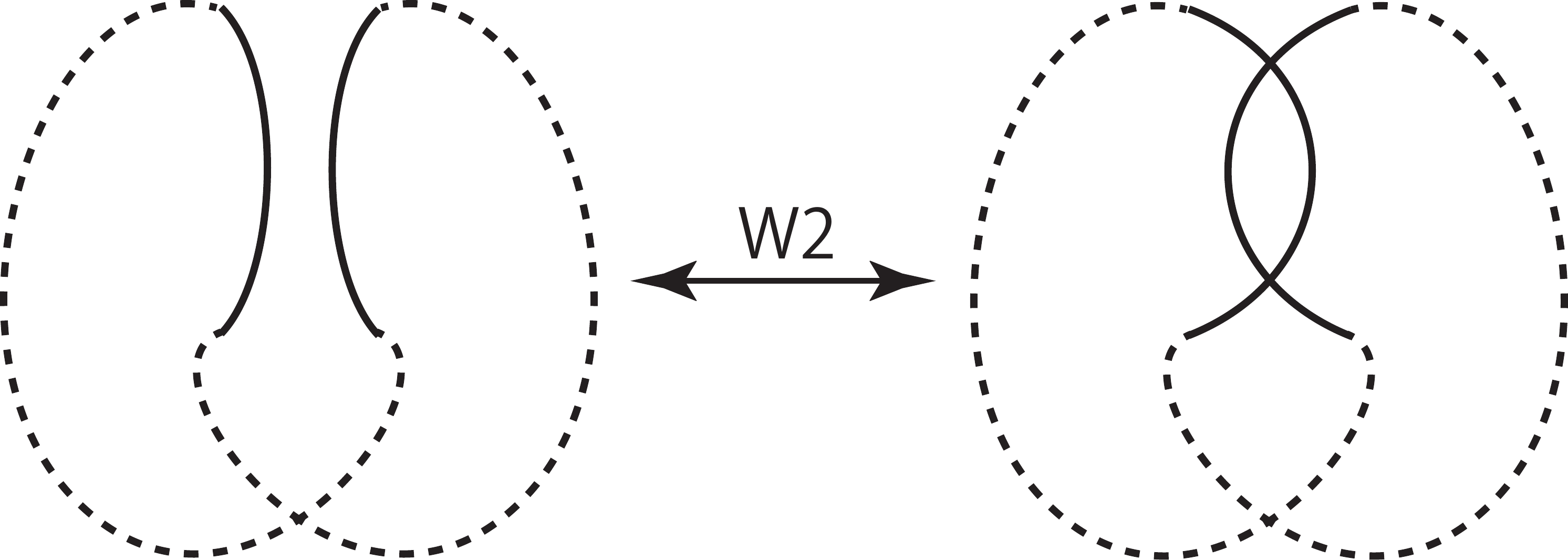}
\caption{Local replacement $w2$.  Dotted arcs show the connections of non-dotted arcs.}\label{tr2}
\end{figure}
%: Triviality of curves without triple chords under strong (1, 2) homotopy
%}\label{sec2}
Now, we define the notion of {\it{reducible}} and {\it{reduced}} knot projection.  
\begin{definition}[Reducible and reduced knot projection]
A knot projection $P$ is {\it{reducible}}, if there is a double point $d$, called a {\it{reducible crossing}}, in $P$, as shown in Fig.~\ref{ych4}.  If a knot projection is not reducible, it is called a {\it{reduced knot projection}}.  
\begin{figure}[h!]
\includegraphics[width=3cm]{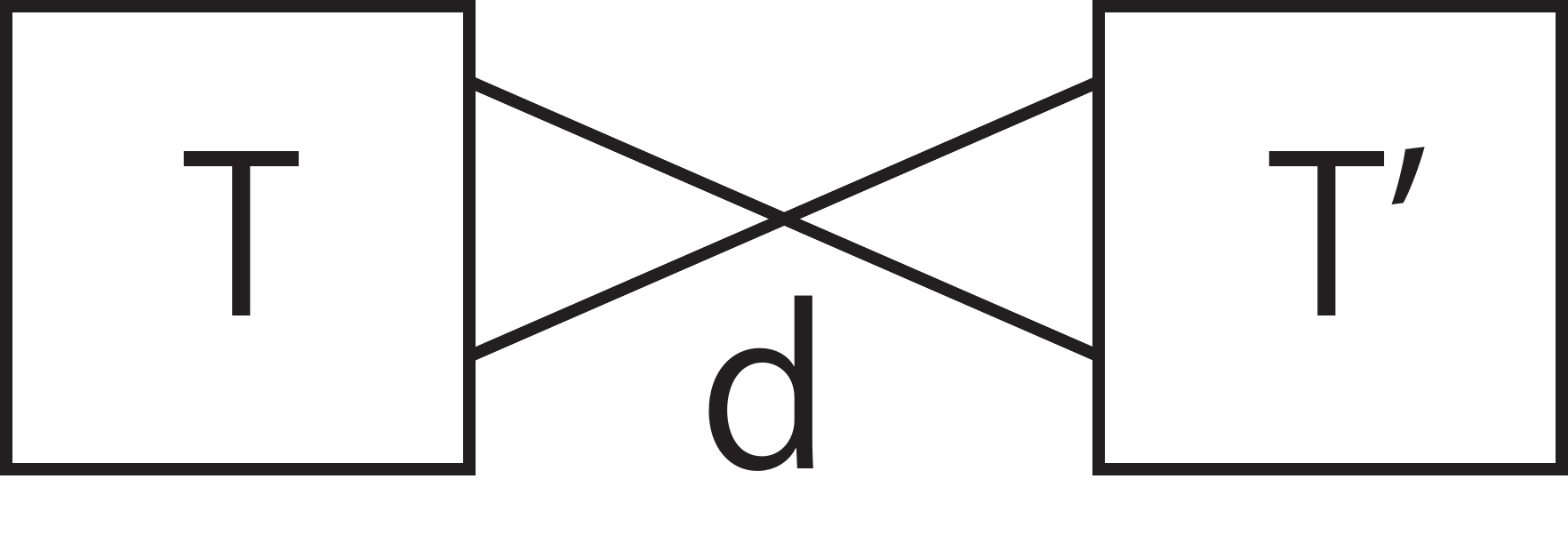}
\caption{Reducible crossing $d$.}\label{ych4}
\end{figure}
\end{definition}
From this definition, we obtain Lemma \ref{lem_reduced}, which is easy to prove and is used often throughout this paper.  
\begin{lemma}\label{lem_reduced}
An arbitrary prime knot projection with no $1$-gons is a reduced knot projection.  
\end{lemma}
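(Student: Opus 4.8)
The plan is to prove the contrapositive: if a knot projection $P$ is not reduced, then either $P$ has a $1$-gon or $P$ is not prime. So I assume $P$ has a reducible crossing $d$ as in Fig.~\ref{ych4}. Reading off that figure, away from a small neighbourhood of $d$ the two local branches at $d$ close up into two loops; that is, $P = A \cup B$ where $A$ and $B$ are closed sub-curves of $P$ with $A \cap B = \{d\}$, or equivalently there is a simple closed curve $C$ on $S^{2}$ with $C \cap P = \{d\}$ separating $A$ from $B$. Since a knot projection is a connected curve, $A$ and $B$ are exactly the closures of the two arcs of $P \setminus \{d\}$, and each double point of $P$ other than $d$ is a self-intersection of $A$, a self-intersection of $B$, or an intersection point of $A$ with $B$; the last possibility is excluded because $A \cap B = \{d\}$. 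Hence every double point of $P$ other than $d$ lies on exactly one of $A$, $B$.

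First I would dispose of the degenerate case in which one of the loops, say $A$, carries no double point. Then $A$ is a simple closed curve on $S^{2}$ whose only vertex coming from $P$ is $d$, and the closed disk bounded by $A$ on the side not containing $B$ meets $P$ only in its boundary $A$. That disk is therefore a $1$-gon of $P$ (one edge, one vertex $d$), so $P$ has a $1$-gon.

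In the remaining case both $A$ and $B$ carry at least one double point. Here I would push $C$ off the crossing: replacing $C$ near $d$ by a short detour around $d$ produces a simple closed curve $C'$ meeting $P$ transversally in exactly two points, neither a double point, and still separating the bulk of $A$ from the bulk of $B$. Because $P$ is connected, $C'$ cuts $P$ into exactly two arcs, one on each side; capping each arc off along $C'$ yields knot projections $P_{A}$ and $P_{B}$ with $P = P_{A} \# P_{B}$, where the double points of $P_{A}$ (resp.\ $P_{B}$) are precisely those of $P$ lying on $A$ (resp.\ $B$). By the case hypothesis each of $P_{A}$, $P_{B}$ has a double point, so neither is the simple closed curve $\bigcircle$; hence this connected-sum decomposition is non-trivial and $P$ is not prime. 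Combining the two cases, a non-reduced knot projection always has a $1$-gon or fails to be prime, which is exactly the contrapositive of the lemma.

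The content here is light, and essentially everything is a direct reading of the definition of a reducible crossing. The only genuinely delicate point is the second case: one must check that the detour $C'$ really meets $P$ in just two non-singular points (using that $C$ passed through $d$ separating two \emph{adjacent} local branches) and that the two pieces obtained by capping along $C'$ are honest knot projections realizing a non-trivial connected sum in the sense used in the Introduction.
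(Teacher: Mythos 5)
Your proposal is correct and takes essentially the same route as the paper: both argue the contrapositive, splitting the curve at the reducible crossing $d$ into two loops and observing that either one loop is simple (forcing a $1$-gon at $d$) or both carry double points (forcing a non-trivial connected-sum decomposition, hence non-primeness). Your write-up is merely a more detailed version of the paper's brief argument, with the capping-off step made explicit.
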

\begin{proof}
To establish the claim, it is sufficient to show that ($\star$) if an arbitrary knot projection with no $1$-gons is reducible, then knot projection is non-prime.  We will now show ($\star$).  %The reason is explained as follows.  
Let $P$ be an arbitrary knot projection with no $1$-gons.  Assume that $P$ is reducible.  Then, $P$ can be presented in Fig.~\ref{ych4}.  If the two faces having the point $d$ of $P$, as in Fig.~\ref{ych4}, are not $1$-gons, then $T$ and $T'$ are not simple arcs.  Thus, $P$ is non-prime.  This completes the proof.  
%Thus, an arbitrary prime knot projection with no $1$-gons and $2$-gons is a reduced knot projection.  
\end{proof}
%By the definition, there 
\section{Proof of Theorem \ref{main1}.}\label{sec2}
To establish Theorem \ref{main1}, we prove Theorem \ref{main3}.  If a knot projection is not a simple closed curve $\bigcircle$, we call it a {\it{non-trivial}} knot projection.  
\begin{theorem}\label{main3}
A prime non-trivial knot projection with no $1$- or $2$-gons contains at least one triple chord.  
\end{theorem}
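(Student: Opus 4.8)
The plan is to argue by contradiction. Suppose $P$ is prime and non-trivial, has no $1$- or $2$-gons, and yet $CD_{P}$ contains no triple chord $\tr$. Two observations come for free. By Lemma~\ref{lem_reduced}, $P$ is a reduced knot projection; equivalently, every chord of $CD_{P}$ crosses at least one other chord, since a chord crossing no other chord bounds either a $1$-gon or a reducible crossing. Also, an Euler-characteristic count is available: if $P$ has $v$ crossings, then as a $4$-valent graph on $S^{2}$ it has $2v$ edges and $v+2$ faces, so the absence of $1$- and $2$-gons gives $4v=\sum_{F}(\#\text{sides of }F)\ge 3(v+2)$, whence $v\ge 6$, and in fact at least $8$ faces are triangles. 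Thus $CD_{P}$ has at least six chords, none of them isolated, and the curve is triangle-rich.

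The first real step is to reduce to the case that $CD_{P}$ contains the $H$ chord $\h$. Indeed, if $CD_{P}$ contained no $\h$, then by Sakamoto--Taniyama \cite[Theorem~3.2]{ST} the curve $P$ would be a connected sum of copies of the simple closed curve $\bigcircle$, the curve resembling $\infty$, and standard $(2,n)$-torus knot projections; since $P$ is prime, $P$ would be exactly one of these. But $\bigcircle$ is trivial, the curve $\infty$ and the smallest torus projections have $1$-gons, and for $n\ge 3$ the standard $(2,n)$-torus knot projection has the chord diagram in which every pair of chords crosses, which contains $\tr$. Each possibility contradicts a hypothesis on $P$ or the standing assumption, so $CD_{P}$ contains an $\h$: there are chords $a,b,c$ with $b$ crossing both $a$ and $c$, with $a$ and $c$ disjoint, and with the six endpoints occurring on the circle in the cyclic order $b_{1},a_{1},c_{1},b_{2},c_{2},a_{2}$ (this cyclic order being forced by the crossing data).

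It remains to upgrade an $\h$ to a triple chord, and this is where the weight of the theorem sits. The strategy I would pursue is an extremal one: among all sub-chord-diagrams of $CD_{P}$ of type $\h$, pick one for which the arc of the circle between $a_{1}$ and $c_{1}$ avoiding the other four distinguished endpoints is innermost (contains as few chord endpoints as possible), and then show that if no chord of $CD_{P}$ crosses two chords that themselves cross---which is precisely what the absence of a triple chord means---then the crossings $A,B,C$ together with the arcs of $P$ joining them would either bound a $2$-gon of $P$, contradicting the hypothesis, or produce a strictly tighter $\h$, contradicting the choice. The same mechanism can be run through the triangular faces guaranteed by the Euler count: a triangular face on crossings $X,Y,Z$ yields three short arcs of $CD_{P}$ pairing the endpoints of the chords $x,y,z$ in a constrained way, and either those three chords already pairwise cross (a triple chord) or they are pairwise disjoint on their six endpoints, in which case one uses a neighbouring face and the no-$2$-gon hypothesis to pass to an adjacent, better triangular face. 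The main obstacle is exactly this last step: pinning down and excluding the bad configurations, and arranging the extremal choice or the walk across faces so that the absence of $2$-gons is genuinely used at each stage. By contrast, everything up to ``$CD_{P}$ contains an $\h$'' is soft.
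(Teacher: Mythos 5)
There is a genuine gap, and you essentially acknowledge it yourself: everything you actually establish is the soft part, and the core of the theorem is left as a strategy sketch rather than a proof. Your reduction steps are fine --- reducedness of $P$ (this is Lemma~\ref{lem_reduced} in the paper), the Euler count giving $p_3\ge 8$ (this is part (a) of Lemma~\ref{basic_lem}), and the appeal to Sakamoto--Taniyama to produce an $\h$ in $CD_P$ (a reduction the paper does not use, but which is correct: in the no-$\h$ case primality forces $P$ to be $\bigcircle$, $\infty$, or a $(2,n)$-torus projection, and each branch contradicts a hypothesis or already yields $\tr$). But the implication ``$CD_P$ contains an $\h$ (or: $P$ has many triangular faces) and $P$ has no $1$- or $2$-gons $\Rightarrow$ $CD_P$ contains $\tr$'' is exactly the substance of the theorem, and your innermost-$\h$ / face-walking argument is only described in the conditional (``the strategy I would pursue''), with the decisive step --- excluding the bad configurations so that either a $2$-gon or a tighter $\h$ appears --- explicitly flagged as the unresolved obstacle. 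An $\h$ is a much weaker piece of structure than a triple chord, and nothing in your sketch shows why the extremal choice terminates in a contradiction rather than in a configuration where the chord crossing both $a$ and $c$ simply fails to cross any crossing pair.

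For comparison, the paper does not try to bootstrap from an $\h$ at all. It imports Shimizu's unavoidable-set theorem (Fact~\ref{unavoidable}), which gives far more refined local data than ``there is a triangle'': one of four specific configurations (a $2$-gon, two adjacent $3$-gons, or a $3$-gon adjacent to $4$-gons in prescribed ways) must occur in a reduced projection. It then classifies $3$-gons into types $A$--$D$, shows types $A$, $B$, $C$ force $\tr$ directly (Lemma~\ref{basic_lem}(b)), and disposes of the remaining $D$-type configurations by an explicit connection-by-connection case analysis (Cases A--D and Cases 1--32), repeatedly invoking reducedness and primality to force extra intersections of the dotted arcs. That case analysis is where the no-$1$-gon and no-$2$-gon hypotheses are genuinely consumed, and it is precisely the work your proposal defers; without carrying out either that analysis or a worked-out version of your extremal argument, the proof is incomplete.
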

%Theorem \ref{main3} implies that ``a knot projection $P$ is the simple closed curve $\bigcircle$ if an arbitrary reduced prime knot projection $P$ without $1$- and $2$-gons has no triple chord in $CD_P$ ($\star$)''.  
%Now, assume that we have Theorem \ref{main3}.  Then, ``an arbitrary prime knot projection $P$ without $1$- and $2$-gons with no triple chord in $CD_P$ is reducible or $\bigcircle$.  If $P$ is reducible, there is a double point $d$ as in Fig.~\ref{ych4}.  Let us consider the operation smoothing $d$ shown in Fig.~\ref{ch13}.  
%\begin{figure}[h!]
%\includegraphics[width=5cm]{ych1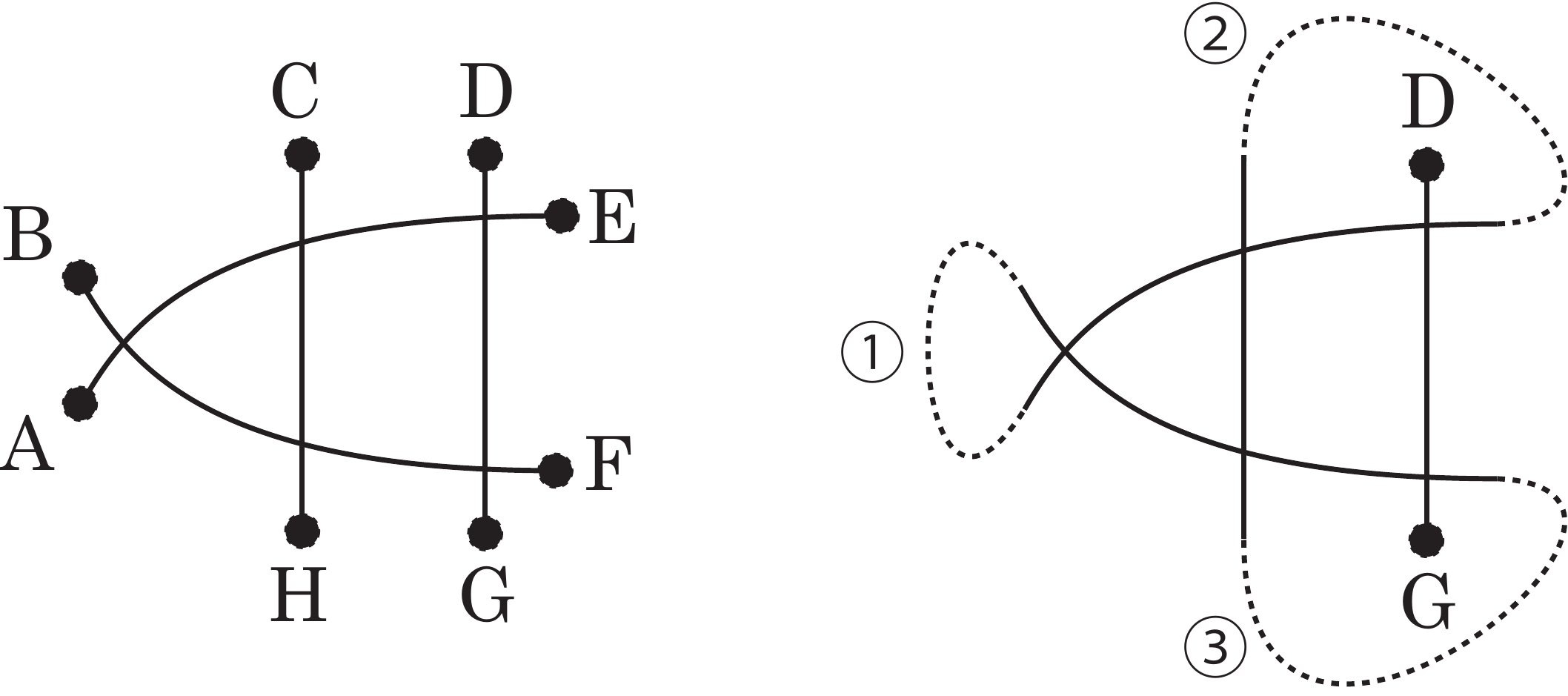}
%\caption{Smoothing $d$}\label{ch13}
%\end{figure}
%Using the induction with respect to the number of double points, we show that any knot projection is available from finite number of reduced knot projections by only using the operation as Fig.~\ref{ch13}.  Then by ($\star$), an arbitrary prime knot projection $P$ without $1$- and $2$-gons with no triple chords in $CD_P$ should be $\bigcircle$ or the knot projection by applying the first Reidemeister moves to $\bigcircle$.  However, the assumption this knot projection has no $1$-gons and then $P$ is $\bigcircle$.  As a result, an arbitrary prime knot projection without $1$- and $2$-gons with no triple chords in $CD_P$ is $\bigcircle$ ($\ast$).  
Now, we deduce Theorem \ref{main1} from Theorem \ref{main3}.  
\begin{proof}
Based on our assumption in Theorem \ref{main1}, a knot projection $P$ has no triple chords in $CD_P$.  
For $P$, we can consider $P^{r}$, the unique knot projection with no $1$- or $2$-gons by a finite sequence consisting of the first and second Reidemeister moves decreasing the number of double points \cite[Theorem 2.2]{khovanov} or \cite[Theorem 2.2]{IT}.  By the assumption of Theorem \ref{main1}, $P^{r}$ is a prime knot projection with no $1$- and $2$-gons or a simple closed curve $\bigcircle$.  Thus, by Theorem \ref{main3}, $P^{r} = \bigcircle$.  

Recover $P$ from $P^r$ using the sequence consisting of $1a$ and $2a$, where $1a$ (resp.~$2a$) is the first (resp.~second) Reidemeister move always increasing a double point (resp.~double points).  If at least one $2a$ in the sequence is $w2$, then there exists at least one triple chord $\tr$ in $CD_P$.  This is because a $2$-gon raised by $w2$ can be represented as shown in Fig.~\ref{tr3}, and the corresponding chord diagram is shown at the left of Fig.~\ref{tr3}.  
\begin{figure}[h!]
\includegraphics[width=5cm]{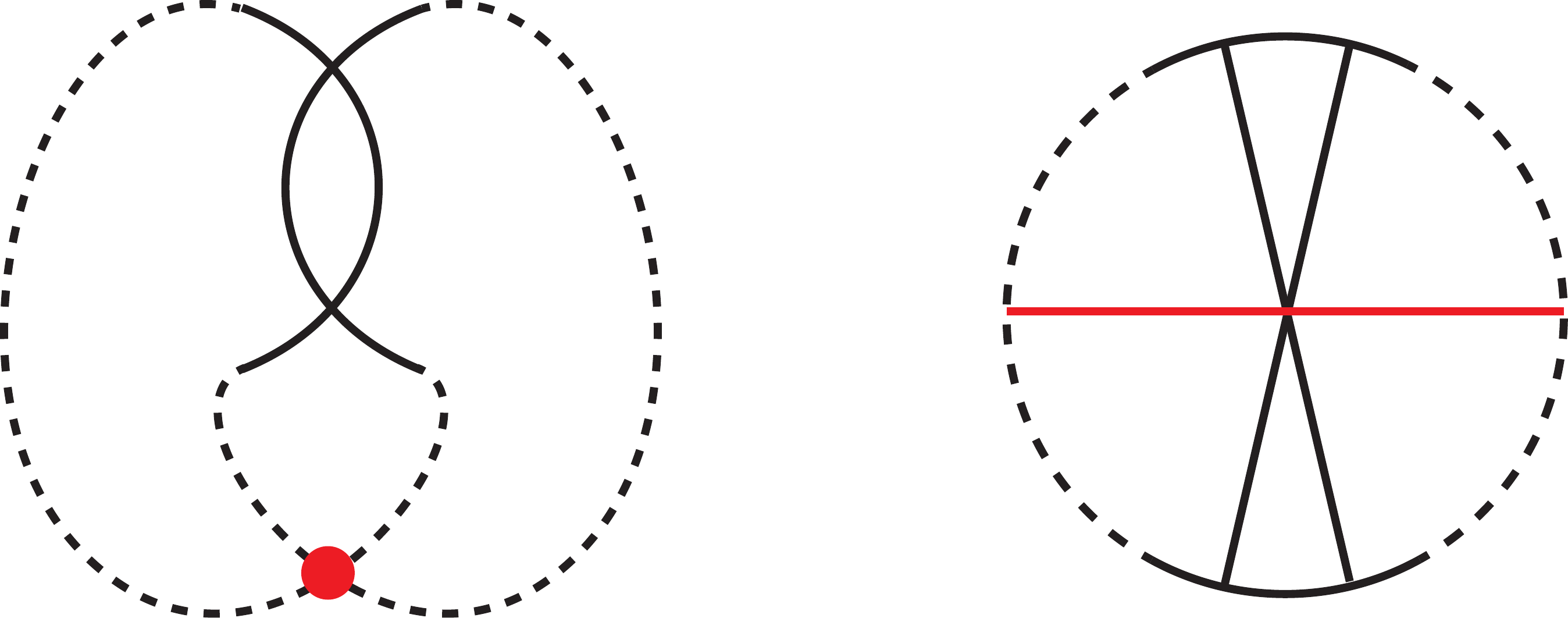}
\caption{$2$-gon appearing in $w2$ (left) and its chord diagram (right).}\label{tr3}
\end{figure}
We can see that the point contained in both dotted arcs exists.  Thus, we can find $\tr$ in $CD_P$.  

However, the existence of $\tr$ contradicts the assumption that $P$ has no triple chords.  Thus, the sequence consisting of $1a$ and $2a$ must consist of $1a$ and $s2a$.  We conclude that Theorem \ref{main3} implies Theorem \ref{main1}.  
\end{proof}
%Therefore, what we should to do is showing Theorem \ref{main3}.  
In the next section, we present the proof of Theorem \ref{main3}.

\section{Proof of Theorem \ref{main3}.}\label{sec_main3}
To prove Theorem \ref{main3}, we first recall Fact~ \ref{unavoidable}.  Fact~\ref{unavoidable} and its proof were obtained by A. Shimizu \cite[Proof of Prop.~3.1]{shimizu2014}.  
\begin{fact}[\cite{shimizu2014}]\label{unavoidable}
A reduced knot projection $P$ contains at least one element of the following set: 
\begin{center}
\includegraphics[width=8cm]{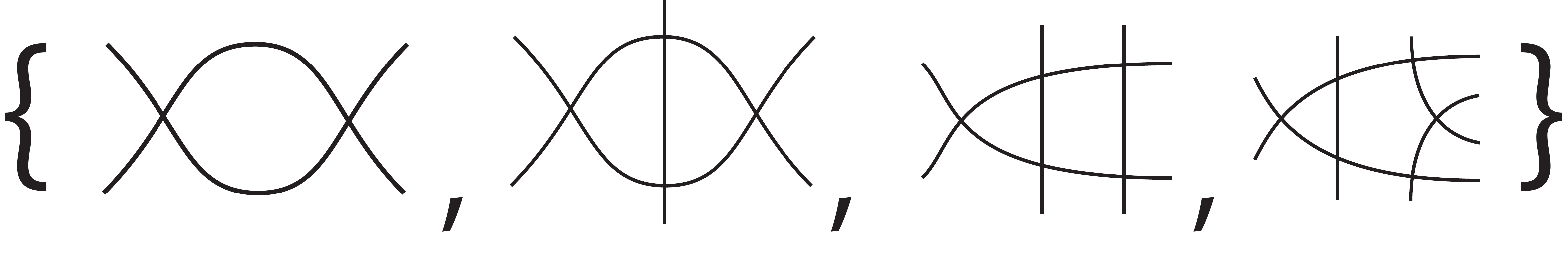}~.
\end{center}  
\end{fact}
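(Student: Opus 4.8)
The plan is to derive Fact~\ref{unavoidable} from Euler's formula for the $4$-valent graph underlying $P$, followed by a short analysis of its smallest faces; this is the route Shimizu takes, and for the very last step I would ultimately appeal to his enumeration. First I dispose of the trivial case: if $P$ has no double point it is a simple closed curve and the conclusion is immediate, so assume $P$ has $V\ge 1$ double points. Regarding $P$ as a connected $4$-valent graph cellularly embedded in $S^{2}$, with $V$ vertices, $E$ edges, and $F$ faces, four-valence gives $2E=4V$, hence $E=2V$, while Euler's formula $V-E+F=2$ yields $F=V+2$. Writing $F_{k}$ for the number of faces bounded by exactly $k$ edges, one has $\sum_{k}F_{k}=V+2$ and $\sum_{k}kF_{k}=2E=4V$. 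Combining these two relations yields the identity
\begin{equation*}
\sum_{k}(4-k)F_{k}=4F-2E=4(V+2)-4V=8 ,
\end{equation*}
so at least one face with $k\le 3$ must occur.

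Next I use the hypothesis that $P$ is reduced to exclude $1$-gons. A $1$-gon is a loop based at a single double point $d$, and a small circle through $d$ enclosing that loop meets $P$ only at $d$; it therefore exhibits $d$ as a reducible crossing in the sense of Fig.~\ref{ych4}, contradicting reducedness. Hence $F_{1}=0$, and the displayed identity collapses to $2F_{2}+F_{3}\ge 8>0$, so $P$ contains at least one $2$-gon or $3$-gon.

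The remaining and hardest step is to match such a smallest face to the displayed configurations. Here I would fix a $2$-gon or $3$-gon $\delta$ and read off the local picture formed by the arcs bounding $\delta$ together with the arcs leaving its corners; since $P$ is generic and $4$-valent, the ways these outgoing arcs can be paired at the corners of $\delta$ fall into finitely many patterns. Running through these patterns and discarding those in which a corner of $\delta$ is a reducible crossing (forbidden by reducedness) or in which two corners of $\delta$ coincide (which forces a $1$-gon or a reducible crossing, and is therefore excluded as well) should leave precisely the local shapes listed in the set. The counting argument cheaply guarantees a small face, but this final enumeration (checking that every admissible bigon and trigon neighborhood of a reduced curve reduces to one of the listed elements) is the delicate point; since Fact~\ref{unavoidable} is exactly Shimizu's Proposition~3.1, I would complete it by invoking his case analysis rather than reproving it.
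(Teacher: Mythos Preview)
The paper does not actually prove Fact~\ref{unavoidable}; it simply quotes it from Shimizu~\cite[Proof of Prop.~3.1]{shimizu2014}, so there is no in-paper argument for you to match. Your Euler-formula computation and the exclusion of $1$-gons via reducedness are both correct (indeed the same computation is reproduced later in the paper as Lemma~\ref{basic_lem}(a)).

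Where your sketch goes wrong is the final step. The unavoidable set is not a list of possible local shapes of a single $2$- or $3$-gon; beyond the bigon, its elements are \emph{pairs of adjacent small faces}---two $3$-gons sharing an edge, or a $3$-gon sharing an edge with a $4$-gon in one of two ways (this is visible in how the paper later splits into the ``second, third, fourth element'' cases). Your Euler count only hands you a single $3$-gon $\delta$, and there is no a~priori bound on the sizes of the faces neighbouring $\delta$: a $3$-gon can perfectly well be flanked by three $5$-gons, which matches none of the listed configurations. So the plan ``fix $\delta$ and enumerate the finitely many ways the outgoing arcs at its corners can be paired'' does not produce a finite case list that forces one of the pictured adjacencies. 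What is actually required is a global discharging-type argument on top of the identity $\sum_k(4-k)F_k=8$: one must show that if no $2$-gon exists and no $3$-gon borders a $3$- or $4$-gon, the positive contribution from the $3$-gons cannot compensate the negative contributions from the large faces. That step is genuinely the content of Shimizu's proposition, and citing it (as both you and the paper do) is appropriate---but your paragraph suggests it could instead be done by a local enumeration around one face, which it cannot.

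A minor point: the trivial case is not quite ``immediate.'' A simple closed curve contains none of the listed configurations, so Fact~\ref{unavoidable} is presumably stated (and is only used in the paper) for reduced projections with at least one double point.
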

We must also check the following Lemma \ref{basic_lem}.  
%\begin{lemma}\label{basic_lem}
Recall that a knot projection that is not a simple closed curve $\bigcircle$ is called a {\it{non-trivial}} knot projection.  
\begin{lemma}\label{basic_lem}

\begin{description}
\item[(a)] A non-trivial knot projection with no $1$- or $2$-gons has at least eight $3$-gons.  \label{L1} $($Cf.~\cite[Theorem 2.2]{EHK}$.)$
\item[(b)] If a non-trivial prime knot projection $P$ with no $1$- or $2$-gons has at least one $3$-gon in $\{A, B, C\}$ in the following, then $P$ has a triple chord in $CD_P$. \label{L2}  
\end{description}
\begin{figure}[h!]
\includegraphics[width=8cm]{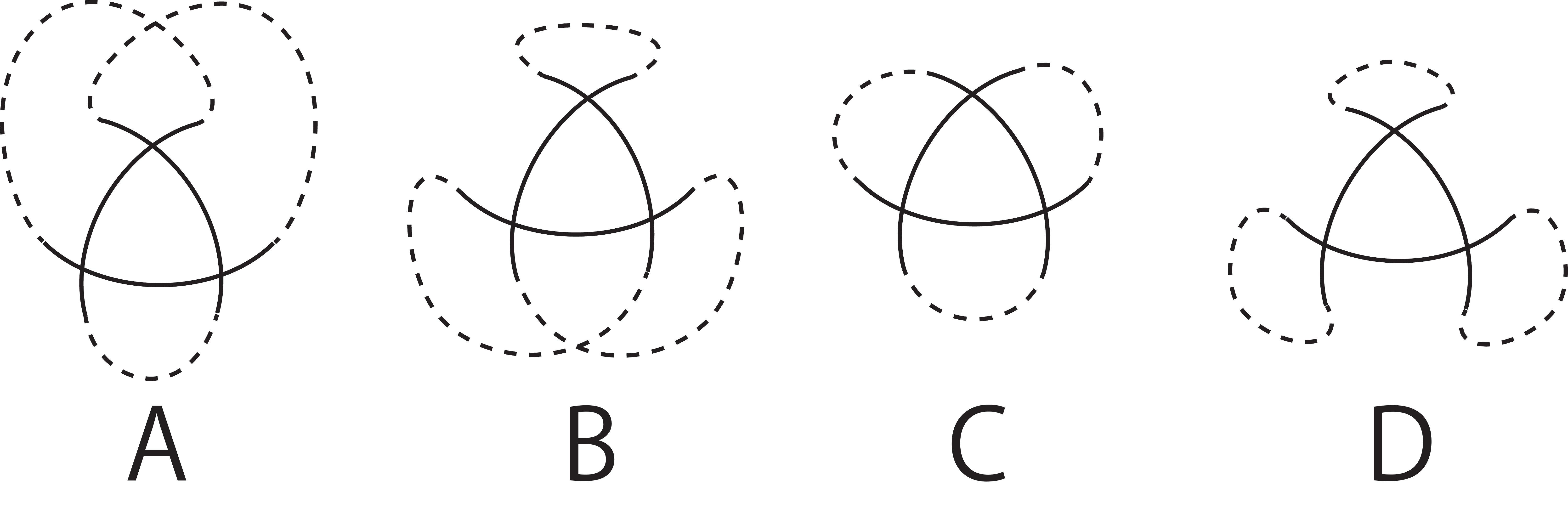}
\caption{All types of $3$-gons.  Dotted arcs show the connections of arcs.}\label{3hen}
\end{figure}
\end{lemma}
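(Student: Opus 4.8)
The plan is to treat parts (a) and (b) separately, each by a counting/combinatorial argument on the cellular decomposition of $S^2$ induced by the knot projection, together with a direct inspection of the chord diagram in part (b).

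For part (a), I would apply Euler's formula to the $4$-valent graph on $S^2$ given by $P$. Write $v$, $e$, $f$ for the numbers of vertices (double points), edges, and faces. Since every vertex is $4$-valent, $e = 2v$, so $v - e + f = 2$ gives $f = v + 2$. Let $f_k$ denote the number of $k$-gons; by hypothesis $f_1 = f_2 = 0$, and double-counting edge–face incidences gives $\sum_{k \ge 3} k f_k = 2e = 4v = 4(f - 2) = 4\sum_{k\ge 3} f_k - 8$. Rearranging yields $\sum_{k \ge 3}(4-k) f_k = 8$, i.e. $f_3 = 8 + \sum_{k \ge 5}(k-4) f_k \ge 8$. (The non-triviality hypothesis guarantees $v \ge 1$ so that this decomposition is genuinely cellular and the faces are disks; one should remark that a non-trivial knot projection with no $1$- or $2$-gons has at least one double point, hence the formula applies.) This is the easy part, and it is essentially \cite[Theorem 2.2]{EHK}.

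For part (b), I would argue by cases according to which of the configurations $A$, $B$, $C$ in Fig.~\ref{3hen} occurs, reading off from each picture the three chords corresponding to the three double points on the boundary of the $3$-gon and checking that they form a triple chord $\tr$ in $CD_P$ — that is, that along the circle of the chord diagram the three chords are pairwise linked (mutually crossing). The key point is that for a $3$-gon of type $A$, $B$, or $C$, the cyclic order in which the six preimages of the three boundary double points appear on $CD_P$ is forced by the local picture together with the fact that $P$ has no $1$- or $2$-gons, so that the three chords genuinely pairwise cross; the $2$-gon-free hypothesis rules out the degenerate adjacencies of preimages that would otherwise occur, and primeness rules out the separating configurations that would let a chord fail to link the other two. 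Fig.~\ref{3hen} presumably classifies $3$-gons precisely by how the strands through the three corners reconnect outside the $3$-gon, and types $A$, $B$, $C$ are exactly those for which this reconnection forces the triple-chord pattern, while the remaining types do not.

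The main obstacle will be part (b): it requires a careful, case-by-case verification that the combinatorial type of the $3$-gon (together with the absence of small gons and primeness) forces the pairwise-linking condition on the three associated chords, and one must be sure the enumeration of $3$-gon types in Fig.~\ref{3hen} is exhaustive and that $\{A,B,C\}$ is correctly identified as the sub-collection yielding a triple chord. Part (a) is routine Euler-characteristic bookkeeping; the care there is only in noting that the hypotheses make the face decomposition cellular with disk faces.
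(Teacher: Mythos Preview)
Your treatment of part (a) is correct and matches the paper's proof exactly: Euler's formula plus $4$-valence gives $\sum_{k\ge 3}(4-k)p_k=8$, hence $p_3\ge 8$.

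For part (b), however, there is a genuine gap. Your plan is to take the three double points on the boundary of the $3$-gon and verify that the three corresponding chords pairwise cross in $CD_P$. This works only for the $C$-type $3$-gon. The types $A,B,C,D$ in Fig.~\ref{3hen} record precisely the external connection pattern of the six outgoing branches, and this pattern alone determines the cyclic order of the six preimages on the circle; for types $A$ and $B$ that cyclic order does \emph{not} make the three corner chords mutually crossing. No appeal to primeness or to the absence of $1$- and $2$-gons can change this, because those hypotheses concern the rest of $P$, not the already-fixed relative positions of the three corner chords.

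The paper's argument for types $A$ and $B$ instead produces an \emph{additional} double point and uses it together with two of the corner chords. For the $B$-type, two of the external (dotted) arcs have their endpoints interleaved on the sphere, so they are forced to intersect; that intersection point, taken with two suitable corners, yields $\tr$. For the $A$-type, one of the external arcs (called $\alpha$) would, if it met no other arc, exhibit $P$ as a connected sum; primeness therefore forces $\alpha$ to intersect one of the other dotted arcs, and again the resulting extra double point together with two corners gives $\tr$. Only for the $C$-type do the three corners already form a triple chord. So your case analysis needs to be redirected: the goal is not to show that the three corner chords are mutually linked, but to locate---using the topology of the connections for $B$ and primeness for $A$---a fourth crossing whose chord completes a triple with two of the corner chords.
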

\begin{proof}
\begin{description}
\item[(a)] Let $V$ be the number of double points (i.e., vertices), $E$ the number of edges, and $F$ the number of faces.  Let $p_k$ be the number of $k$-gons.  For a non-trivial knot projection $P$ with no $1$- or $2$-gons, 
\begin{equation}
\begin{split}
&\sum_{k \ge 3} k p_k = 2E, \\
%\end{equation}
%\begin{equation}
&\sum_{k \ge 3} p_k = F.  \label{e1}
\end{split}
\end{equation}
Now, we consider knot projections that are graphs on $S^2$ such that every vertex has four edges.  Thus, 
\begin{equation}
\begin{split}
&4V=2E, \\
%\end{equation}
%\begin{equation}
&V - E + F =2.  \label{e2}
\end{split}
\end{equation}
Formula (\ref{e2}) implies $4F-2E=8$.  Substituting formula (\ref{e1}) into $2E$ and $F$ of $4F-2E=8$, we have 
\begin{equation*}
p_3 + \sum_{k \ge 4} (4-k) p_k = 8.  
\end{equation*}
Then, we have $p_3 \ge 8$.  This completes the proof.  
\item[(b)] 
\begin{itemize}
\item A-type $3$-gon.  Observe the figure of the spherical curve that contains dotted arcs and an A-type $3$-gon shown in Fig.~\ref{3hen}.  
\begin{figure}[h!]
\includegraphics[width=6cm]{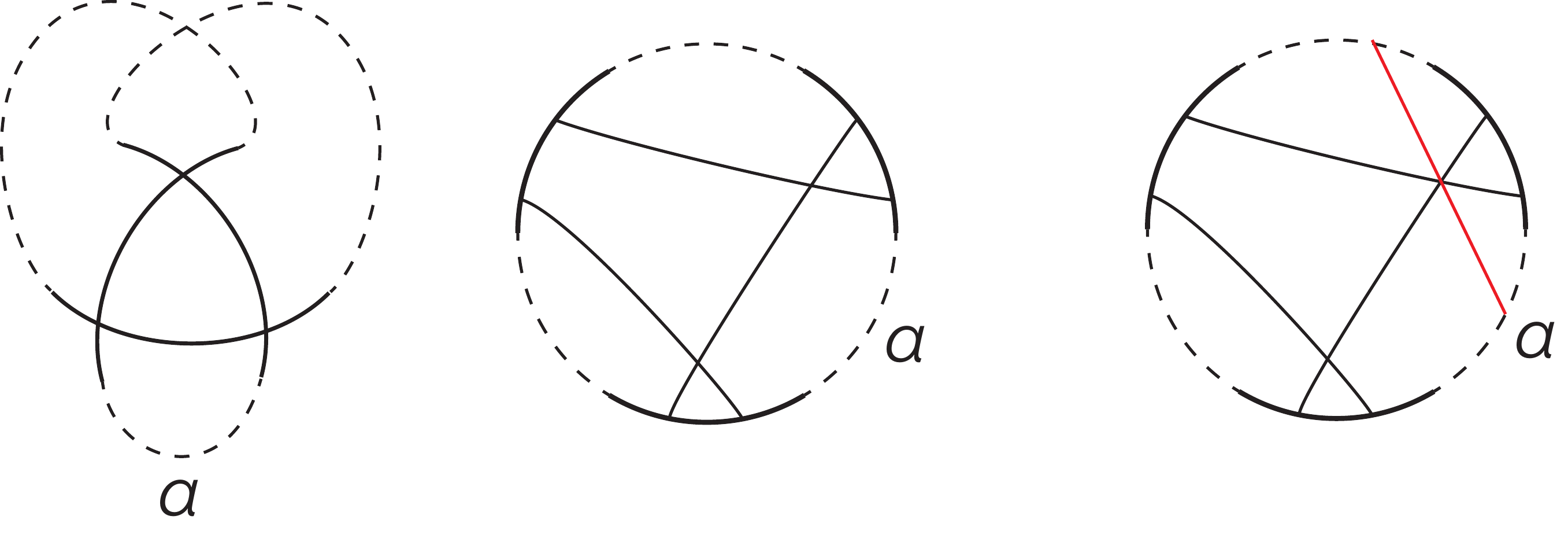}
\caption{A-type $3$-gon having the dotted arc labeled $\alpha$ (left), the corresponding chord diagram (center), and chord diagram with a triple chord (right).}\label{tr4}
\end{figure}
From the assumption, a knot projection $P$ containing an $A$-type $3$-gon is prime.  Then, the $\alpha$-part of Fig.~\ref{tr4} must intersect at least one of the other dotted arcs.  Similar to Fig.~\ref{tr4}, there exists $\tr$ in $CD_P$.   
%we note that there exists two dotted arcs which should mutually intersect and thus, there exists at least one intersection point $x$ by the two dotted arcs.  Drawing the chord diagram $CD_P$ with dotted parts corresponding to the spherical curve $P$ of Fig.~\ref{3hen} which contains dotted arcs and A-type $3$-gon and put one chord by the intersection point $x$, $P$ has a triple chord in $CD_P$.  
\item B-type $3$-gon.  Note the spherical curve $P$ that contains dotted arcs and a B-type $3$-gon shown in Fig.~\ref{3hen}.  The corresponding chord diagram $CD_P$ is shown at the right of Fig.~\ref{tr5}.  
\begin{figure}[h!]
\includegraphics[width=5cm]{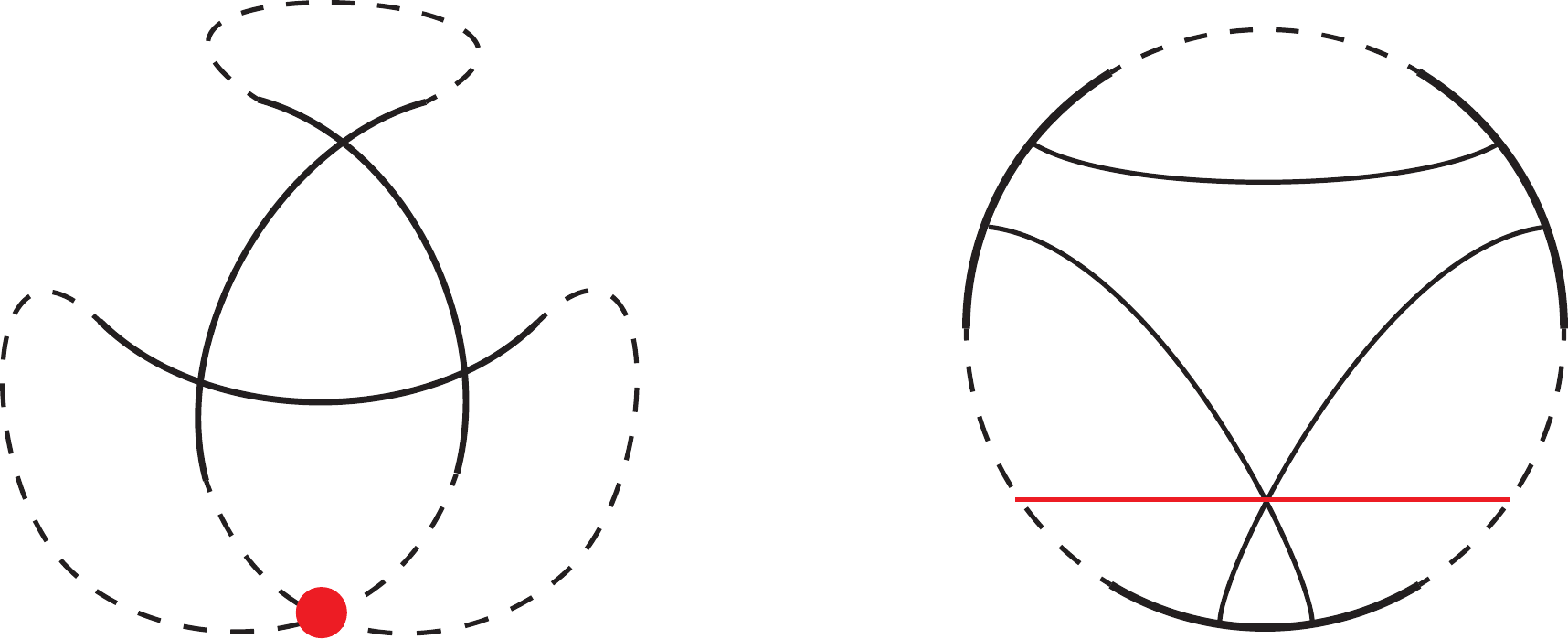}
\caption{B-type $3$-gon, which must have a red double point, and its chord diagram.}\label{tr5}
\end{figure}
In $CD_P$, we can find $\tr$, since there are two dotted arcs in a $B$-type $3$-gon that must intersect (Fig.~\ref{tr5}, left).  
%In $CD_P$, there are two dotted parts between $1$ and $2$.  
%By the assumption, $P$ is prime and thus the isolated dotted arc in the upper part of the figure of Fig.~\ref{3hen} should intersect at least one of the other dotted arcs.  This implies $P$ has a triple chord in $CD_P$.  
\item C-type $3$-gon.  If a knot projection $P$ contains a C-type $3$-gon, then $CD_P$ immediately has a triple chord.  
\end{itemize}
\end{description}
The consideration of the three cases completes the proof.  
%{\color{red}{(1)\UTF{0082}\UTF{00CD}\UTF{0083}I\UTF{0083}C\UTF{0083}\UTF{0089}\UTF{0081}[\UTF{0090}\UTF{0094}\UTF{0081}A(2)\UTF{0082}\UTF{00CD}\UTF{0082}±\UTF{0082}\UTF{00CC}\UTF{0090}}\UTF{0082}\UTF{00E6}\UTF{0082}\UTF{00E8}\UTF{0083}_\UTF{0083}C\UTF{0083}\UTF{008C}\UTF{0083}N\UTF{0083}g\UTF{0082}\UTF{00C9}\UTF{008C}\UTF{0178}\UTF{0082}\UTF{0160}\UTF{0082}\UTF{00E9}\UTF{0081}B\UTF{008C}\UTF{00E3}\UTF{0082}\UTF{00C5}\UTF{008F}\UTF{00D8}\UTF{0096}\UTF{0178}\UTF{0082}\UTF{00F0}\UTF{008F}\UTF{0091}\UTF{0082}\UTF{00AD}}}
\end{proof}

Now, we prove Theorem \ref{main3}.  
\begin{figure}[htbp]
\includegraphics[width=6cm]{}
\caption{Third element (left) of the set of Fact~\ref{unavoidable} and the case having a D-type $3$-gon (right).  Dotted arcs show the connections of non-dotted arcs.}\label{3}
\end{figure}
\begin{proof}
By Fact \ref{unavoidable}, a knot projection $P$, which we have considered, contains at least one of the elements mentioned in Fact \ref{unavoidable}.  Thus, we consider the possibilities that $P$ contains the first, second, third, or fourth of those elements.  In what follows, checking the possibility of the first (resp.~second, third, or fourth) element is called {\it{the first (resp.~second, third, or fourth) element case}}.  

\noindent{\bf{The first element case.}}

By assumption, $P$ has no $2$-gon.  Thus, there is no possibility of the existence of the first element of the set of Fact \ref{unavoidable}.  
%Therefore, we consider the case that $$P$ has the second element of the set of Fact \ref{unavoidable}.  

\noindent{\bf{The second element case.}}

If $P$ has the second element (i.e., two neighboring $3$-gons) from the left-hand side of the set shown in Fact \ref{unavoidable}, assume that one of the two neighboring $3$-gons is $D$ type.  
%of Lemma \ref{basic_lem} in neighbored trigons by Fact \ref{unavoidable}.  
In this case, another $3$-gon in the two neighborhood $3$-gons is type $B$.  This implies that $P$ has at least a type $A$, $B$, or $C$ $3$-gon, from which we conclude that $P$ has triple chords in $CD_P$ by Lemma \ref{basic_lem}.  Thus, it is sufficient to consider the two cases of the third or the fourth figure from the left-hand side in the set of Fact \ref{unavoidable}.  Below, we consider these figures.  

\noindent {\bf{The third element case.}} 

By Lemma \ref{basic_lem}, if a knot projection contains the part shown in Fig.~\ref{3}, we can assume that the $3$-gon is type $D$ from that figure.  Since the $3$-gon is type $D$, dotted arcs arise as shown at the right-hand side.  Thus, we distinguish the following cases in which a dotted arc contains the arc DG shown in the figure:
\begin{itemize}
\item Arc number 1 contains DG (Case A, B), 
\item Arc number 2 (or 3) contains DG (Case C, D).  
\end{itemize}
In the remainder of the proof, the symbol (X, Y) (resp.~($x \sim y$)) means we connect a point X with a point Y (resp.~ a vertex $x$ with a vertex $y$) via a route outside the fixed part of a knot projection, e.g., as seen below, Case A and Fig.~\ref{casea}.  
\begin{itemize}
\item Case A is defined by (A, G), (B, D), (C, E), and (F, H).  See Fig.~\ref{casea}.  
\begin{figure}[h!]
\includegraphics[width=5cm]{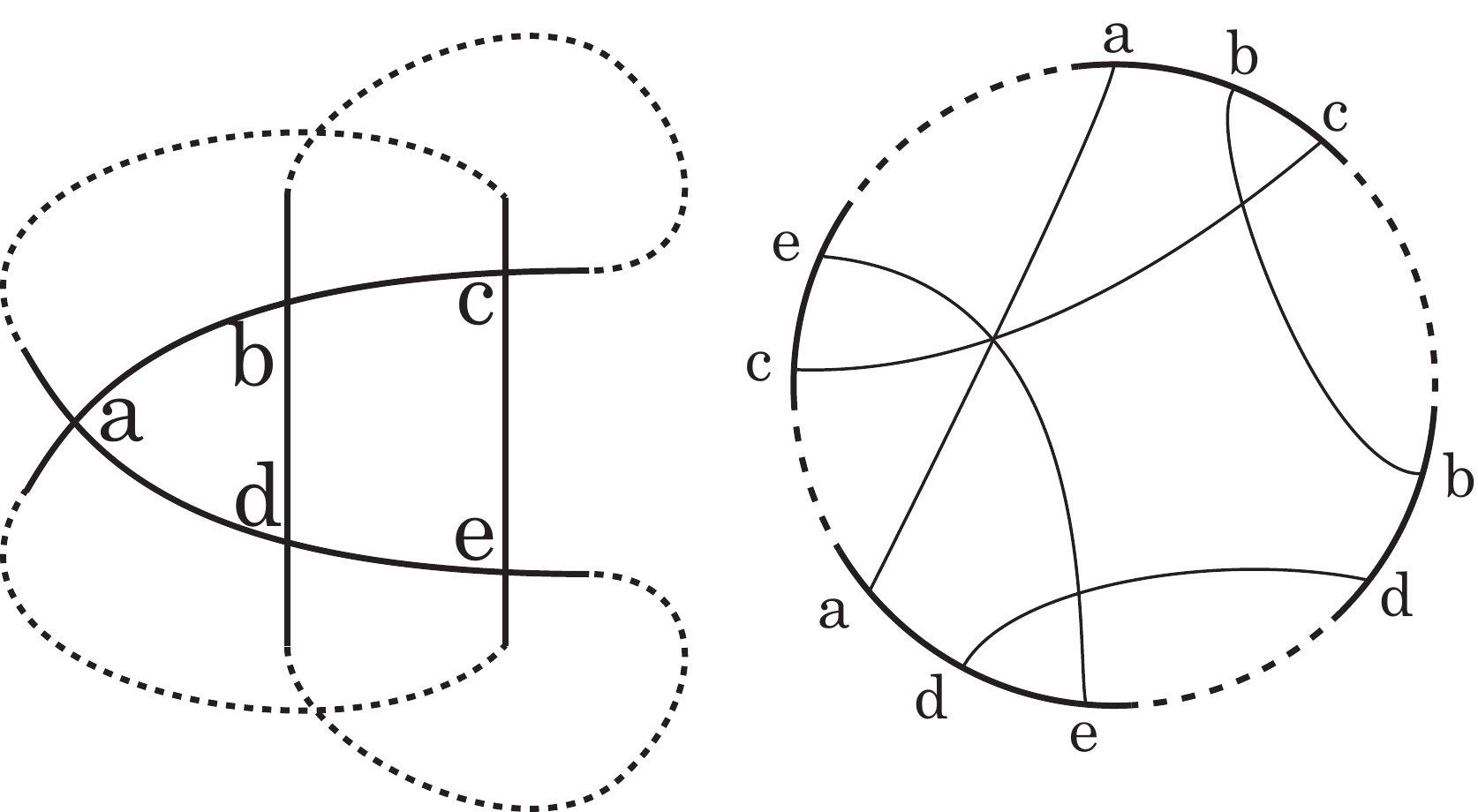}
\caption{Case A.}\label{casea}
\end{figure}
\item Case B is defined by (A, D), (B, G), (C, E), and (F, H).  See Fig.~\ref{caseb}.  
\begin{figure}[h!]
\includegraphics[width=5cm]{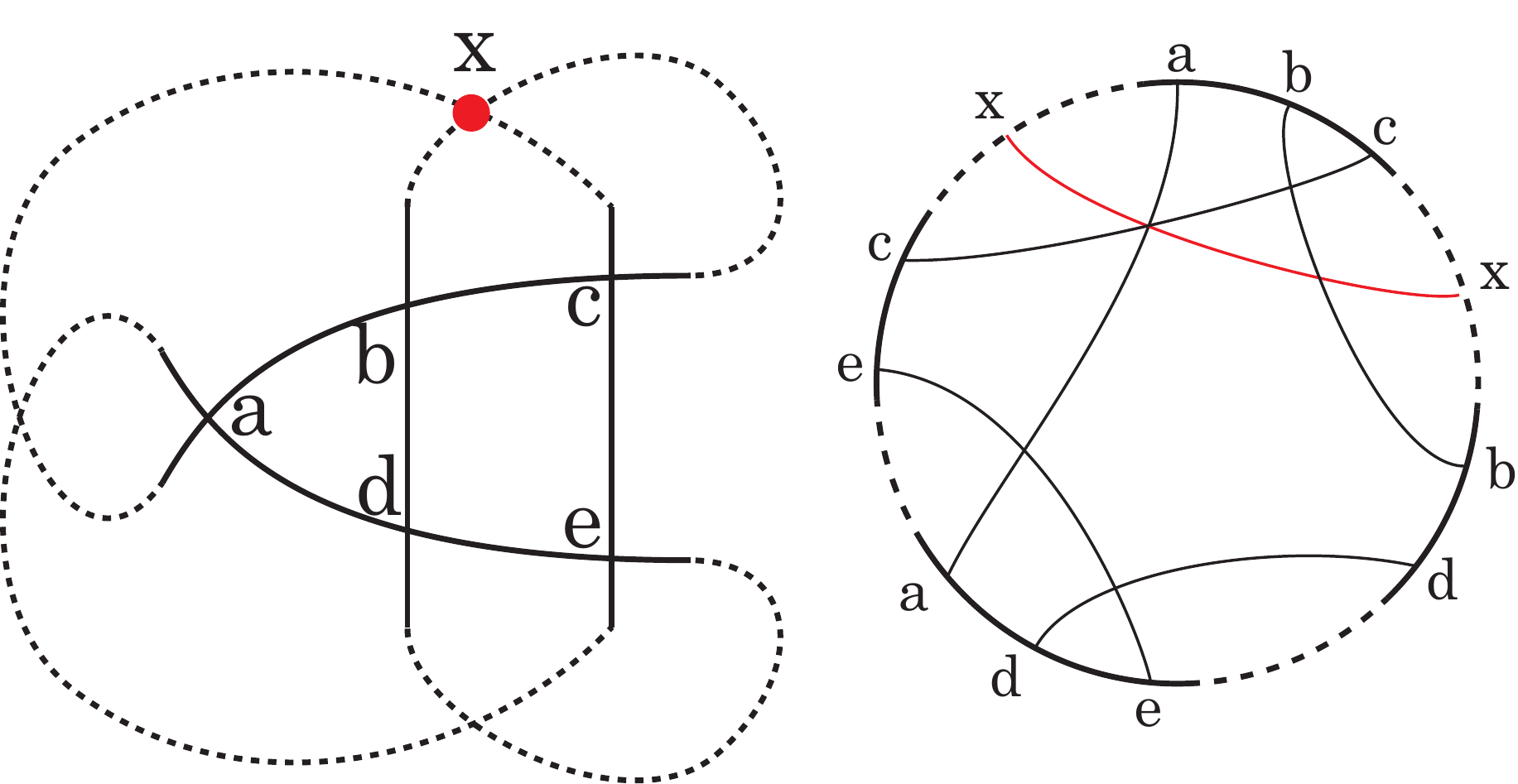}
\caption{Case B.}\label{caseb}
\end{figure}
\item Case C is defined by (A, B), (C, D), (E, G), and (F, H).  See Fig.~\ref{casec}.  
\begin{figure}[h!]
\includegraphics[width=5cm]{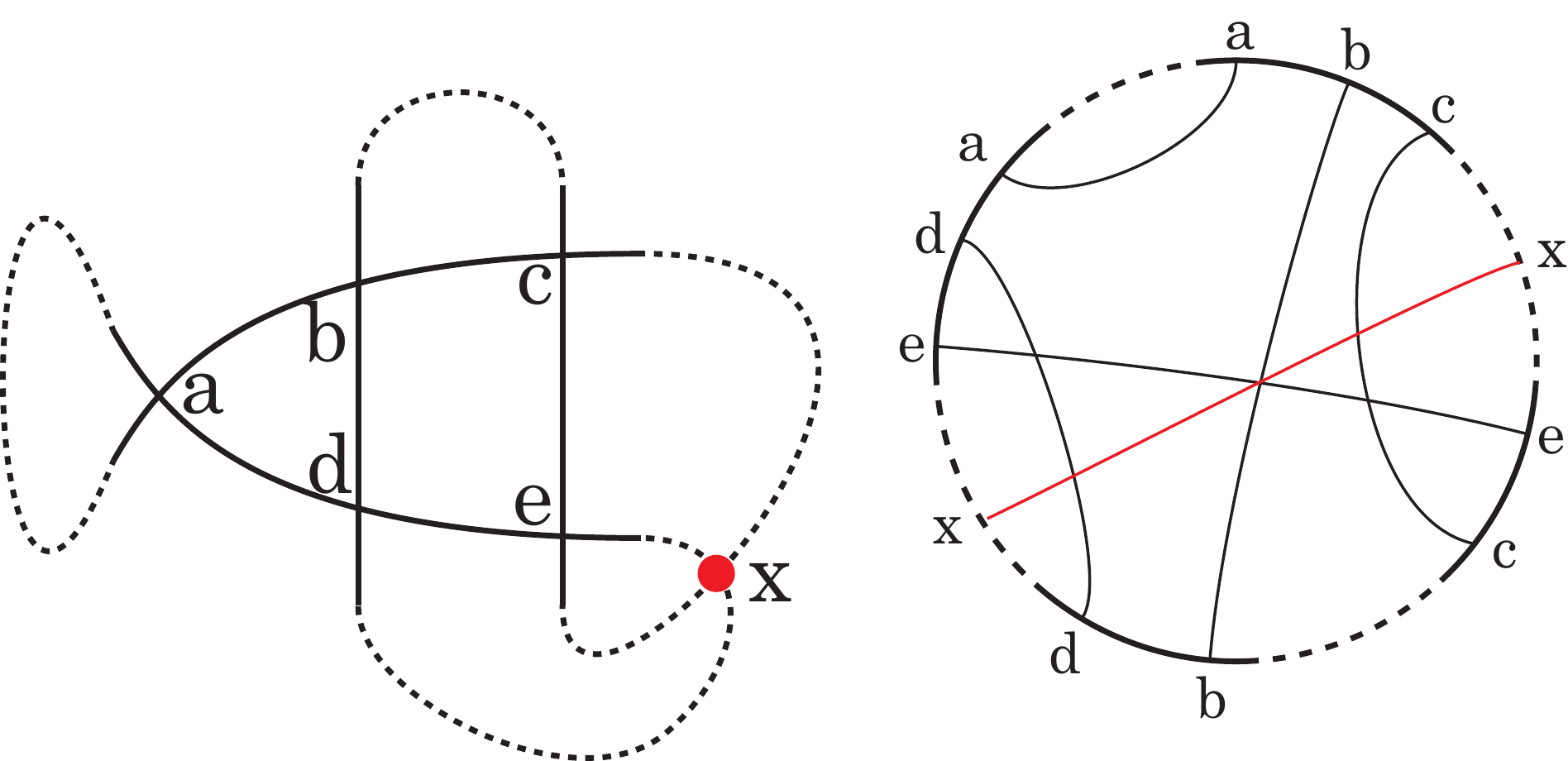}
\caption{Case C.}\label{casec}
\end{figure}
\item Case D is defined by (A, B), (C, G), (D, E), and (F, H) as shown in Fig.~\ref{cased}.  
\begin{figure}[h!]
\includegraphics[width=5cm]{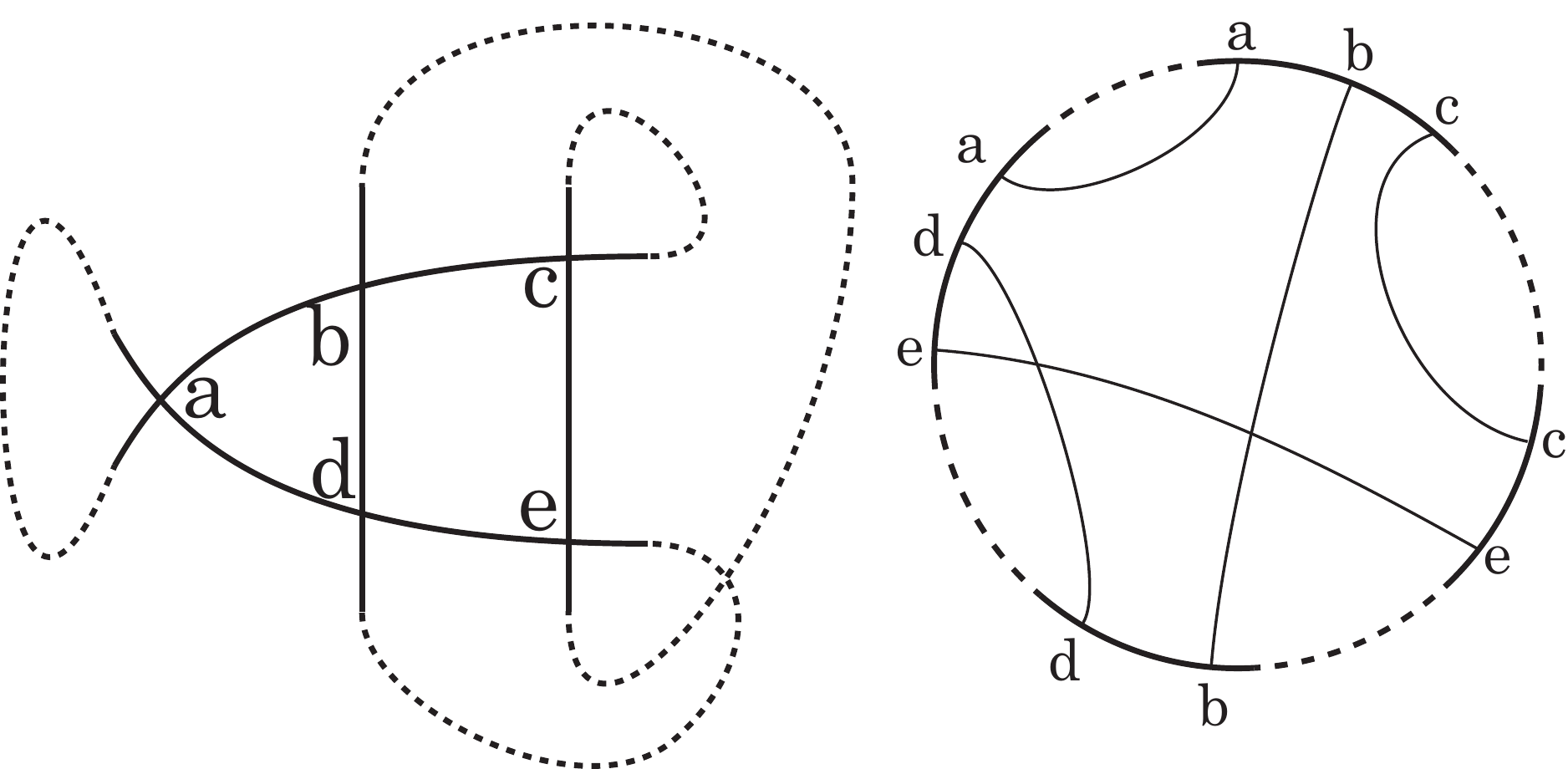}
\caption{Case D.}\label{cased}
\end{figure}
This knot projection $P$ is a prime knot projection with no $1$- or $2$-gons; hence, $P$ is reduced (Lemma \ref{lem_reduced}).  Thus, (a$\sim$a) intersects another dotted arc ($\ast$).  If ($\ast$) is (b$\sim$e) or (d$\sim$e), $P$ has a triple chord in $CD_P$.  If ($\ast$) is neither (b$\sim$e) nor (d$\sim$e), but is (c$\sim$c), the knot projection $P$ and its $CD_P$ appears as Fig.~\ref{cased1}, and thus, there exists a triple chord in $CD_P$. 
\begin{figure}[h!]
\includegraphics[width=5cm]{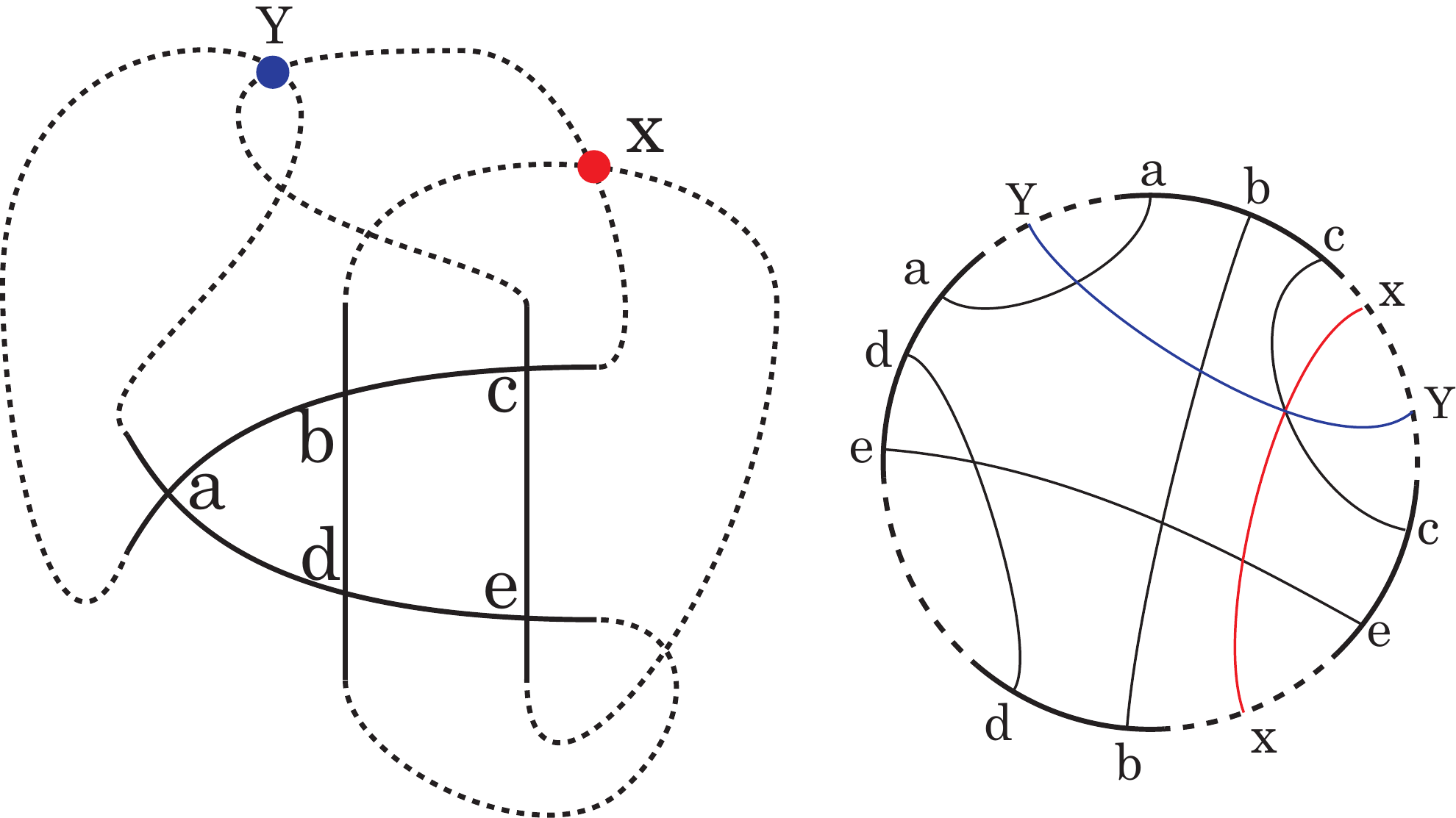}
\caption{Instance of Case D.}\label{cased1}
\end{figure} 
\end{itemize}
In summary, if a knot projection $P$ has the third element of the set of Fig.~\ref{unavoidable}, then $P$ has a triple chord in $CD_P$.  Then, we are left with only the case of the fourth element of the set of Fact \ref{unavoidable}.  

\noindent {\bf{The fourth element case.}}
Start by setting the symbols for points to be connected and vertices as in Fig.~\ref{4}.  
\begin{figure}[htbp]
 \begin{center}
 \includegraphics[width=8cm]{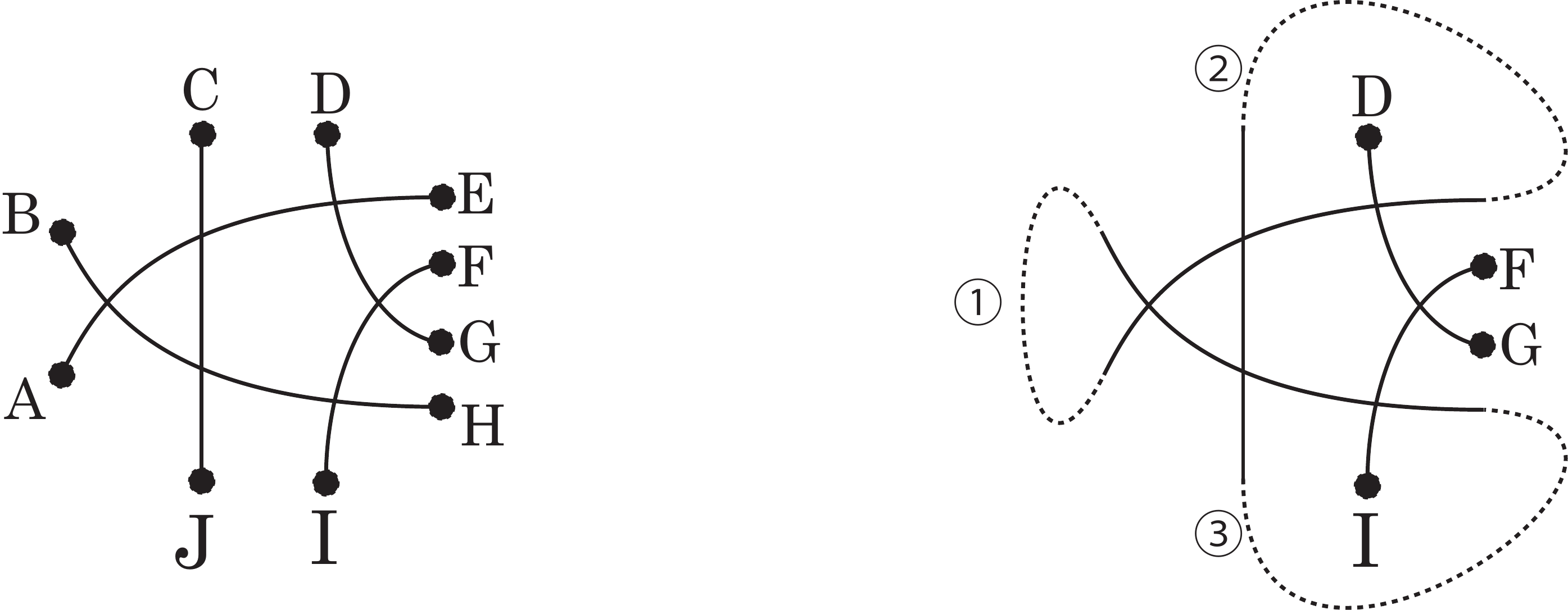}
 \end{center}
 \caption{Fourth element (left) of the set of Fact~\ref{unavoidable} and the case having a D-type $3$-gon (right).  Dotted arcs show the connections of non-dotted arcs.}\label{4}
\end{figure}
By Lemma \ref{unavoidable}, we can fix the $3$-gon in Fig.~\ref{4} as type $D$.  Then, we can draw dotted arcs as in the figure.  Next, we consider the dotted arcs that contain the non-dotted arcs DG and FI.  Based on this consideration, we prove the claim case by case.  Since there is the symmetry between arc numbers $2$ and $3$, it is sufficient to consider the following four groups, each of which contains eight cases (in total, 32 cases).  
\begin{table}[htbp]
\caption{Each of four groups having eight cases.  Dotted arcs show the connections of arcs.}\label{4a}
\begin{tabular}{|c|c|c|c|}\hline
%Cases 1--8 & Cases 9--16 \\ \hline
\includegraphics[width=2.5cm]{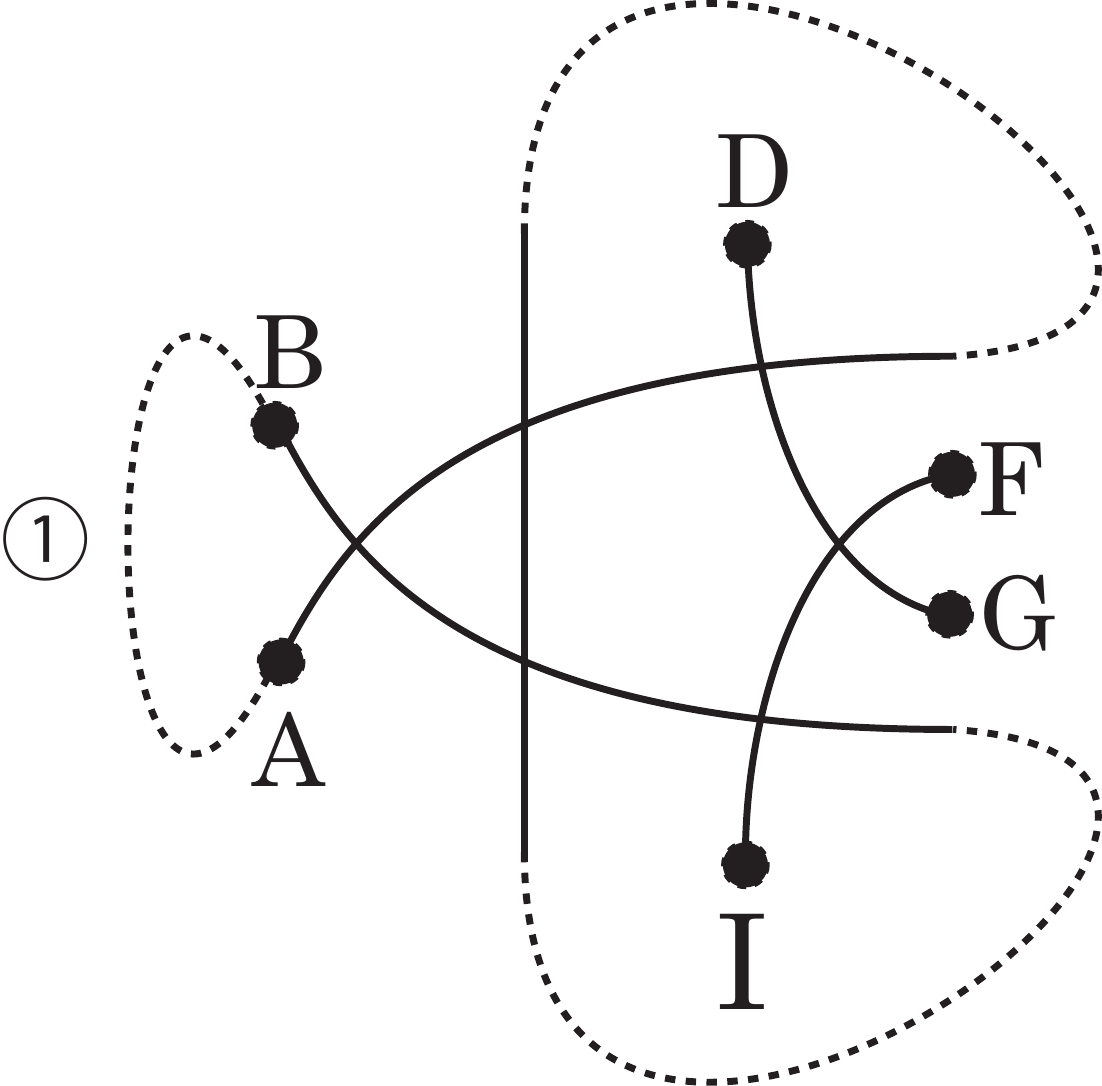} &
\includegraphics[width=2.5cm]{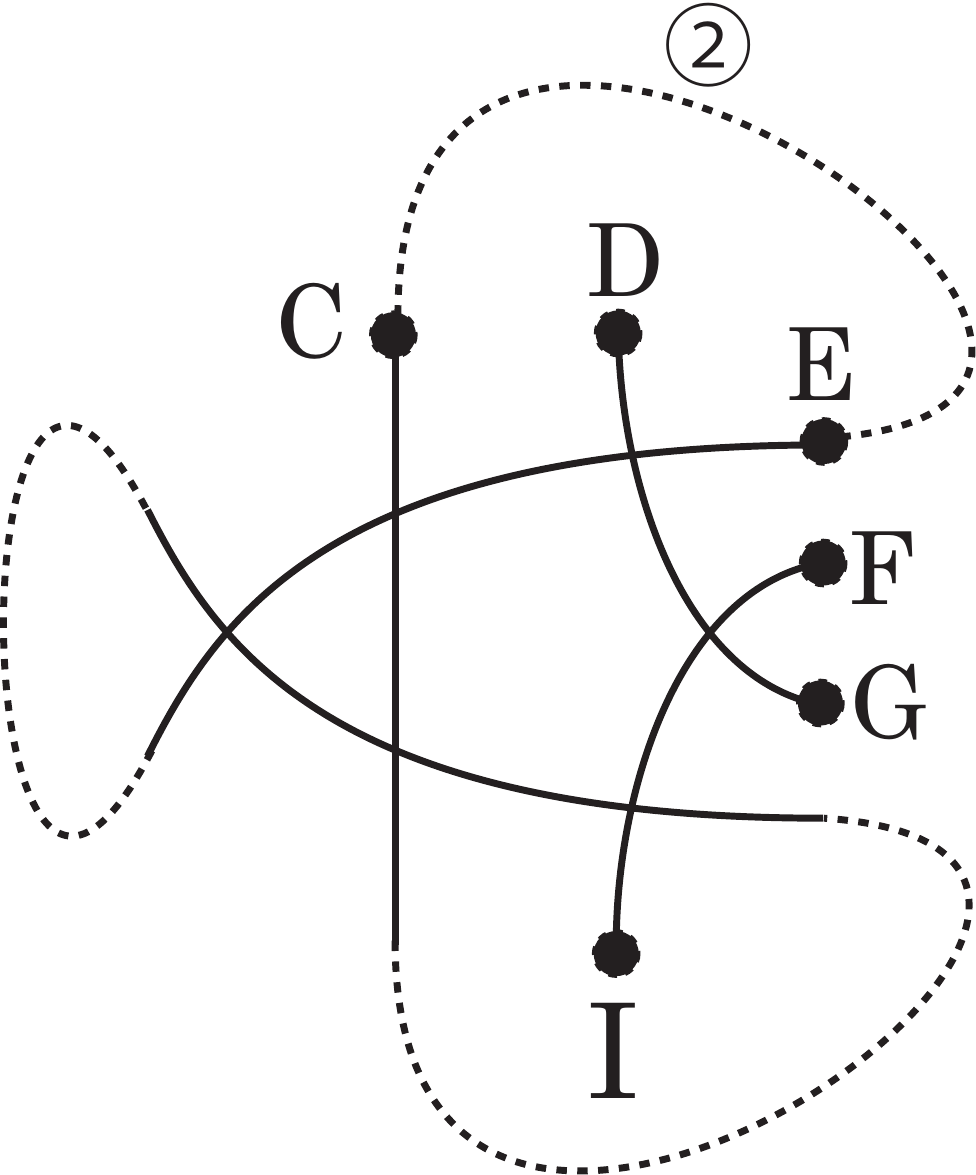} &
%\\ \hline
%Cases 16--24 & Cases 25--32 \\ \hline
\includegraphics[width=2.5cm]{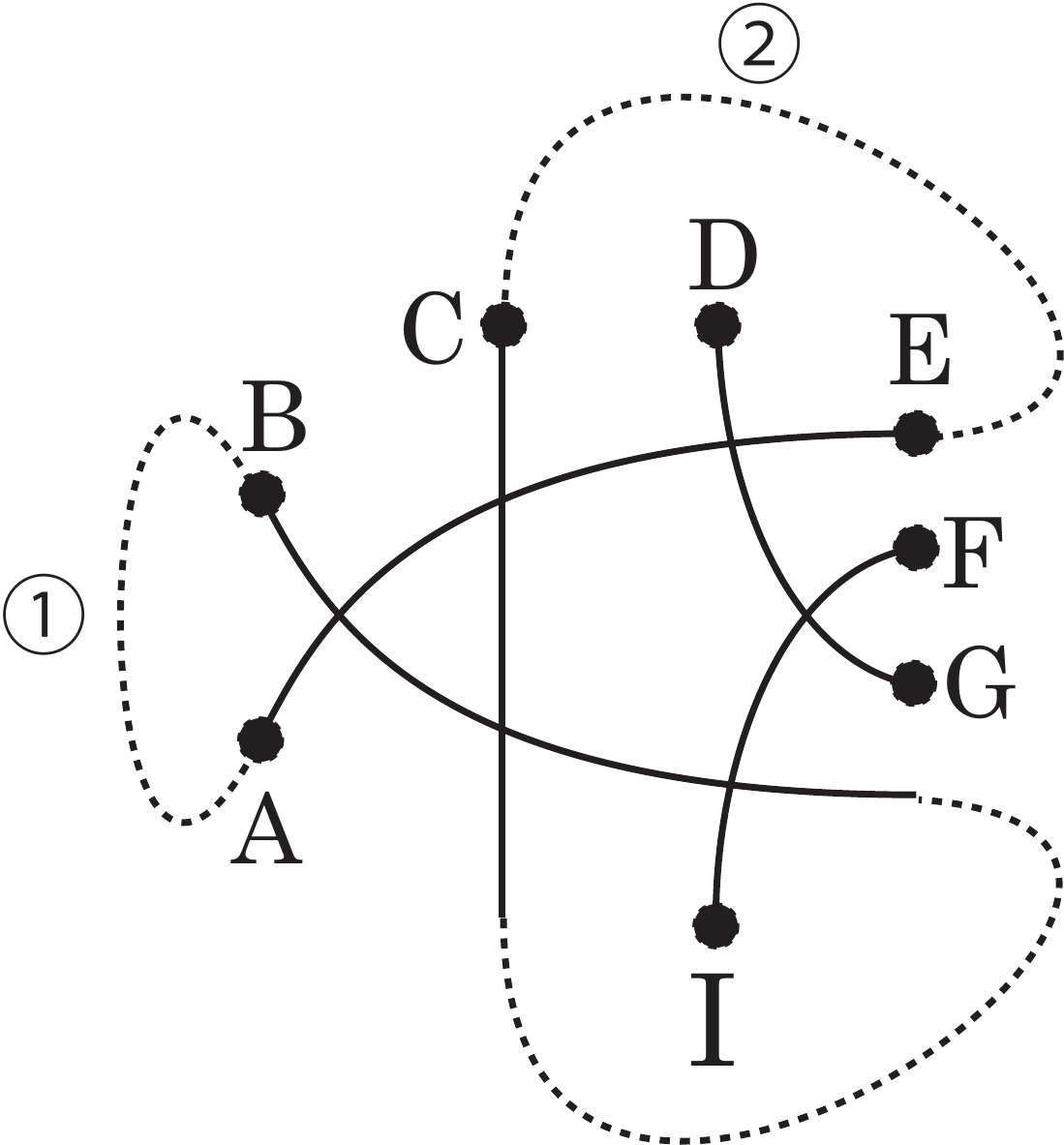} &
\includegraphics[width=2.5cm]{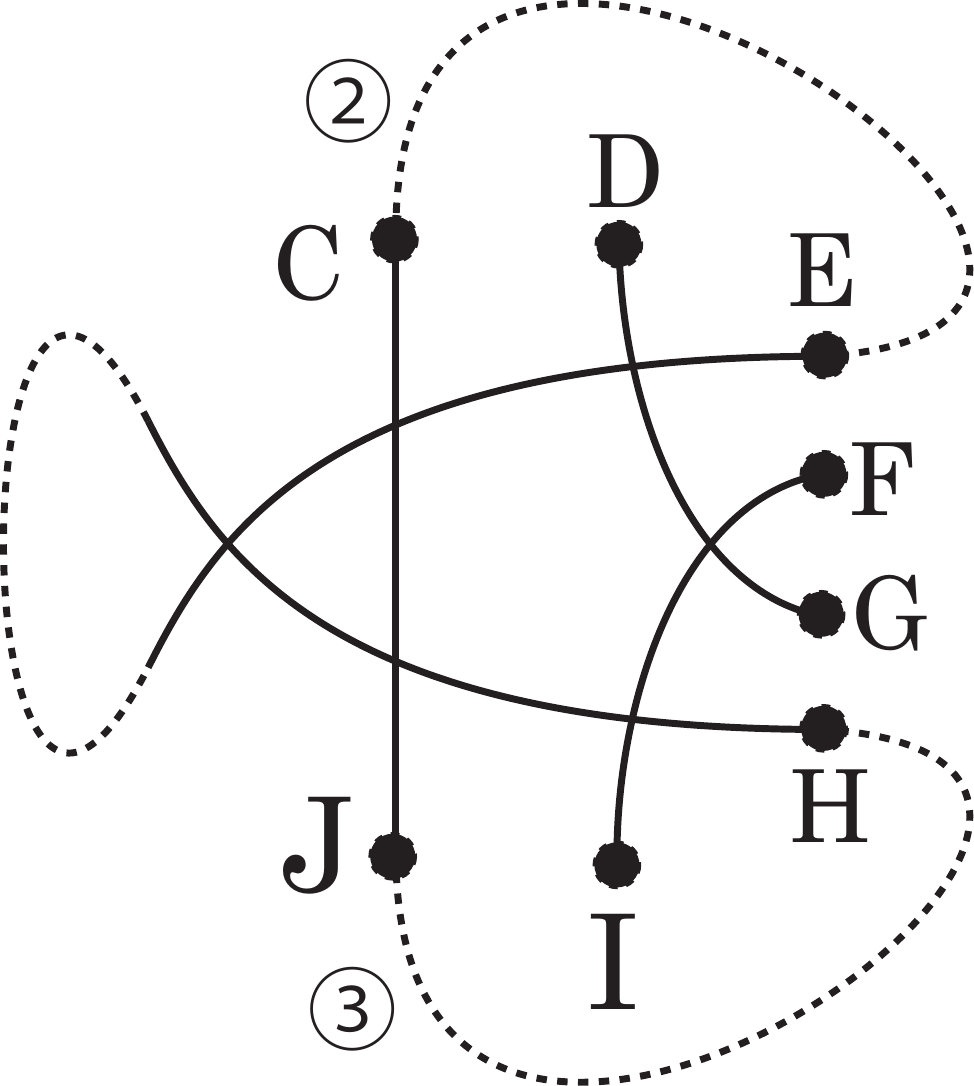} \\ \hline
\end{tabular}
\end{table}
The points of grouping are as follows.  
\begin{itemize}
\item Dotted arc number $1$ contains both DG and FI (Cases 1--8).  This condition fixes (C, E) and (H, J).  
\item Dotted arc number $2$ contains both DG and FI (Cases 9--16). This condition fixes (A, B) and (H, J).  
 
(Replacing $2$ with $3$ replicates the discussion as a result of their symmetry; hence, we omit the respective cases.)

\item Arc number $1$ contains exactly one non-dotted arc (i.e.,~DG or FI), and arc number $2$ contains exactly one non-dotted arc (Cases 17--24).  This condition fixes (H, J).  

(Replaying $2$ with $3$ replicates the discussion as a result of their symmetry, hence we omit the respective cases.)

\item Arc number $2$ contains exactly one non-dotted arc (i.e.,~DG or FI) and arc number $3$ contains exactly one non-dotted arc (Cases 25--32).  This condition fixes (A, B).  
\end{itemize}

\noindent $\bullet$ {\bf{Cases 1--8.}}  
If arc number $1$ contains both two arcs DG and FI, then we can automatically fix (C, E) and (J, H) (Table~\ref{4a}, Cases 1--8).   
%\begin{figure}[htbp]
 %\begin{center}
 %\includegraphics[width=3cm]{4a.pdf}
 %\end{center}
 %\caption{Cases 1--8.  Dotted arcs show the connections of branches.}
 %\label{4a}
%\end{figure}
Table \ref{splittingCases1--8} shows how arcs connect, considering all possibilities.  Recall that the symbol ``(X, Y)'' means that we connect X and Y.  
\begin{table}
\caption{Method to split into Cases 1--8.}\label{splittingCases1--8}
\begin{tabular}{|c|c|}\hline
 $ ($B$,~ $D$) \begin{cases}
    ($G$,~ $F$) ($I$,~ $A$) & ($Case$~1) \\
    ($G$,~ $I$) ($F$,~ $A$) & ($Case$~2)
  \end{cases}
$
& $($B$,~ $F$) \begin{cases}
    ($I$,~ $D$) ($G$,~ $A$) & ($Case$~3) \\
    ($I$,~ $G$) ($D$,~ $A$) & ($Case$~4)
  \end{cases}
$
\\ \hline
$($B$,~ $G$) \begin{cases}
    ($D$,~ $F$) ($I$,~ $A$) & ($Case$~5) \\
    ($D$,~ $I$) ($F$,~ $A$) & ($Case$~6)
  \end{cases}$
  &
  $($B$,~ $I$) \begin{cases}
    ($F$,~ $D$) ($G$,~ $A$) & ($Case$~7) \\
    ($F$,~ $G$) ($D$,~ $A$) & ($Case$~8)
  \end{cases}$ \\ \hline
\end{tabular}
\end{table}
For every case 1--8, a knot projection $P$ has at least one triple chord in $CD_P$.  See Table~\ref{cases1--8}.  
\begin{table}
\caption{Easy cases to prove.  Cases 1--8.}\label{cases1--8}
\begin{tabular}{|c|c|c|} \hline
Case 1 & Case 2 & Case 3 \\ \hline
\includegraphics[width=3.5cm]{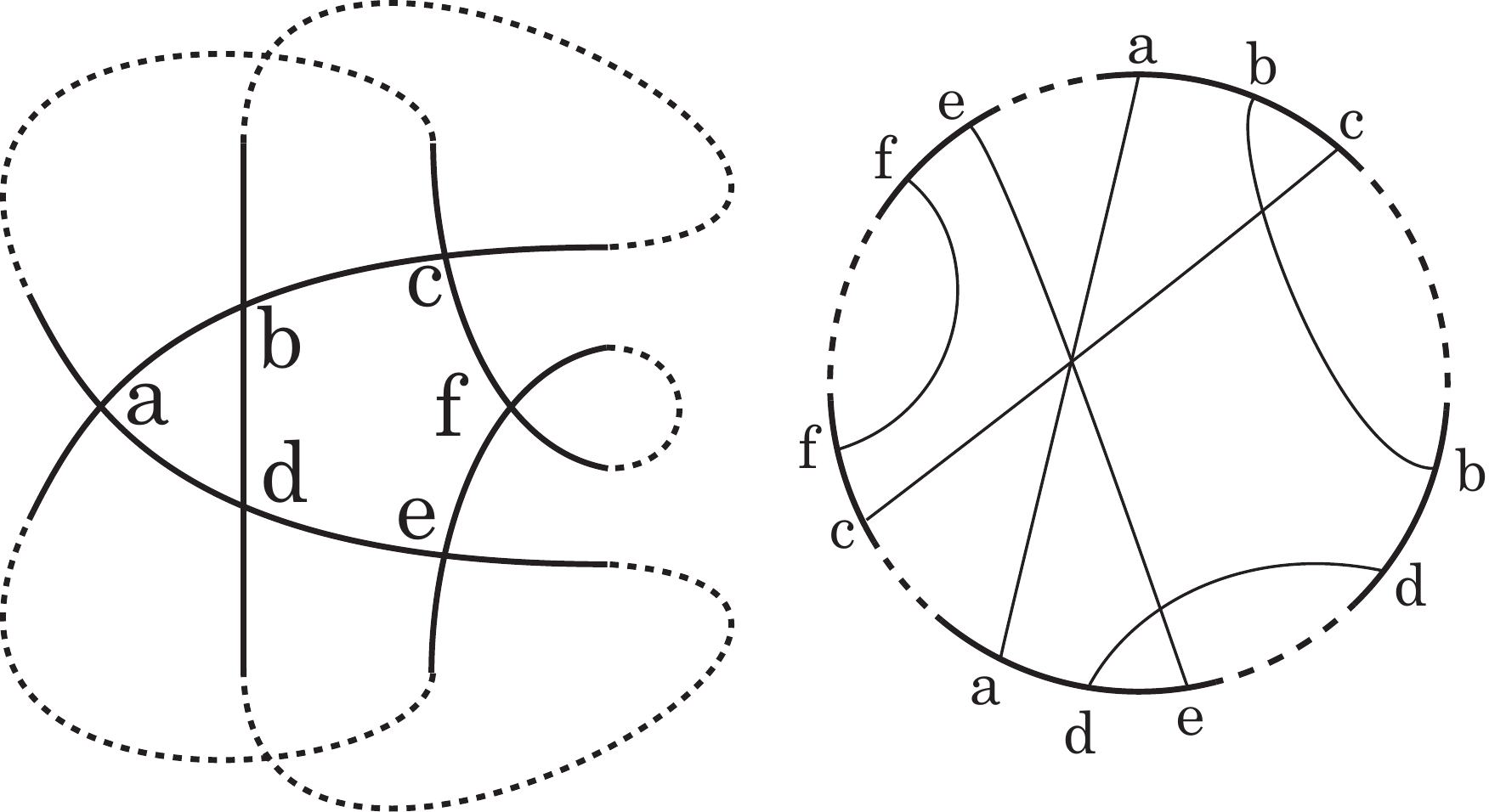} & \includegraphics[width=3.5cm]{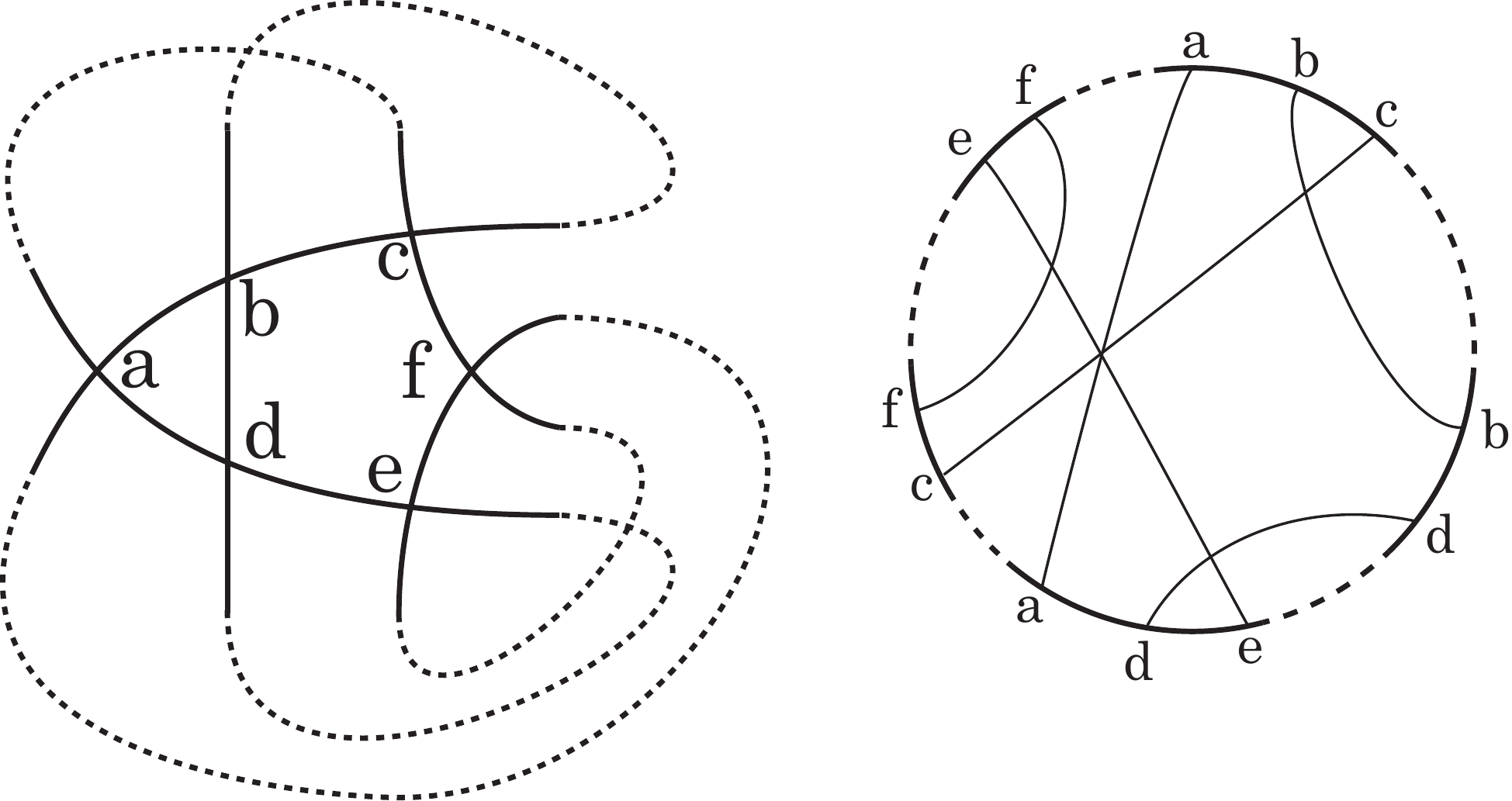} & \includegraphics[width=3.5cm]{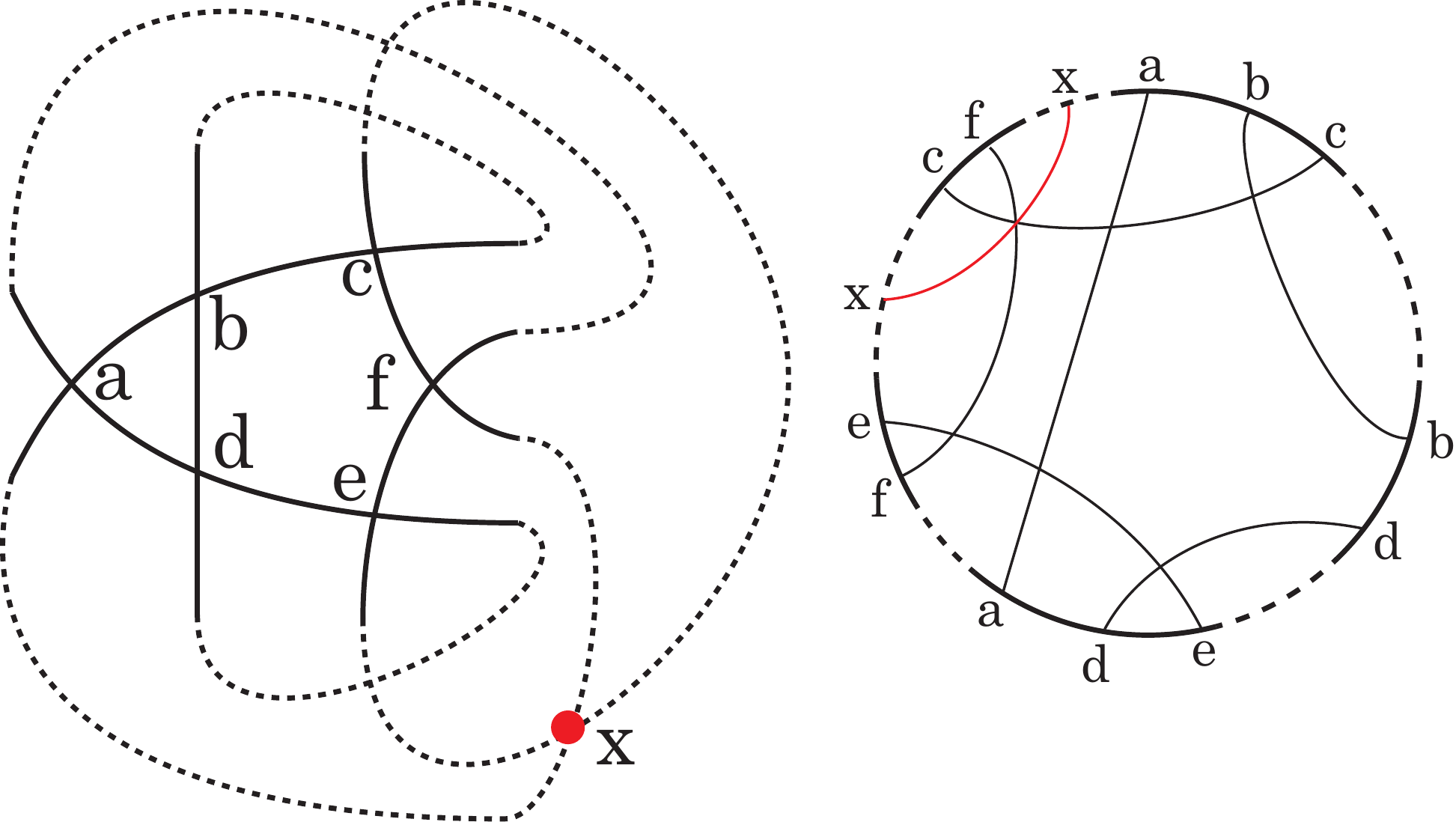} \\ \hline
Case 4 & Case 5 & Case 6 \\ \hline
\includegraphics[width=3.5cm]{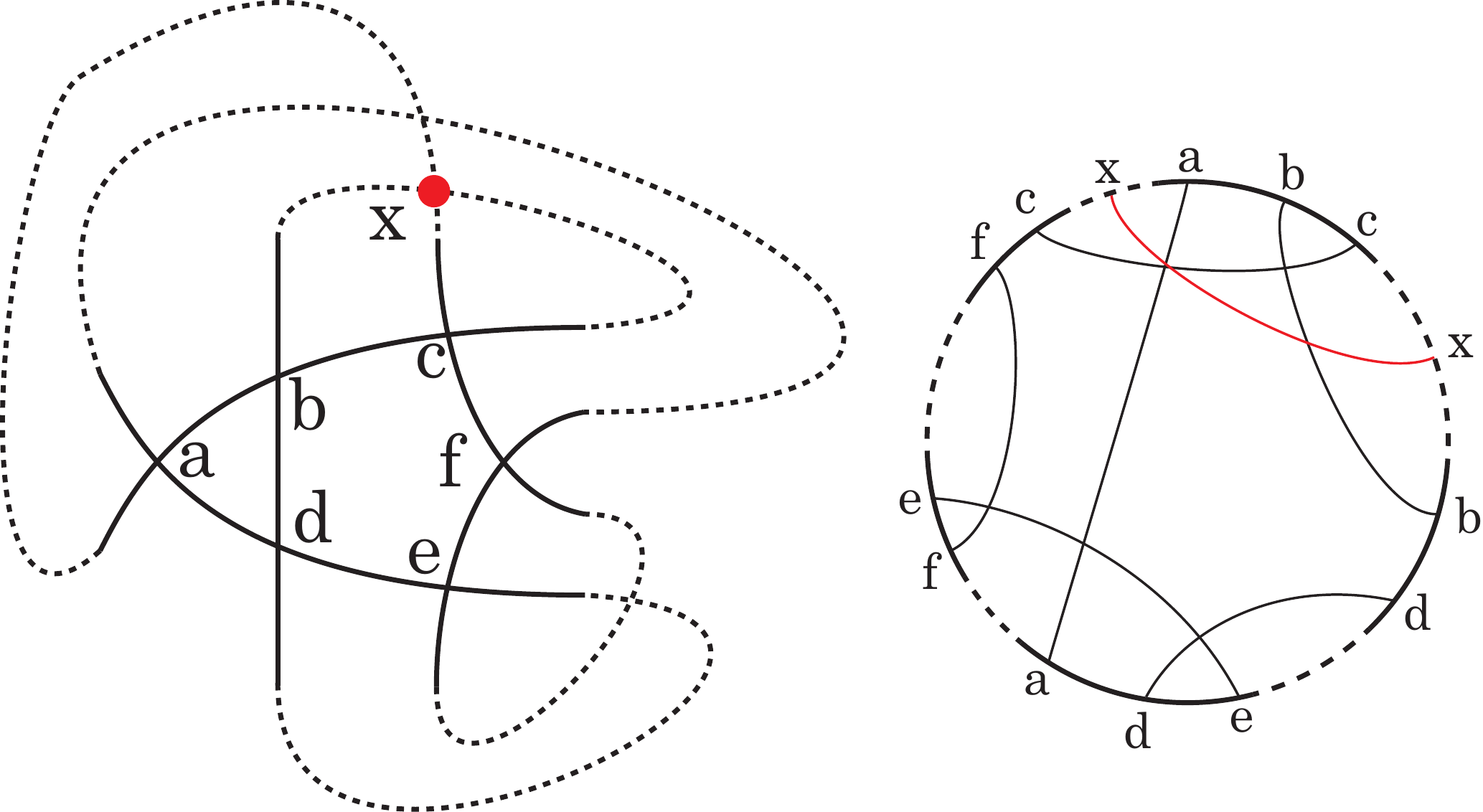} & \includegraphics[width=3.5cm]{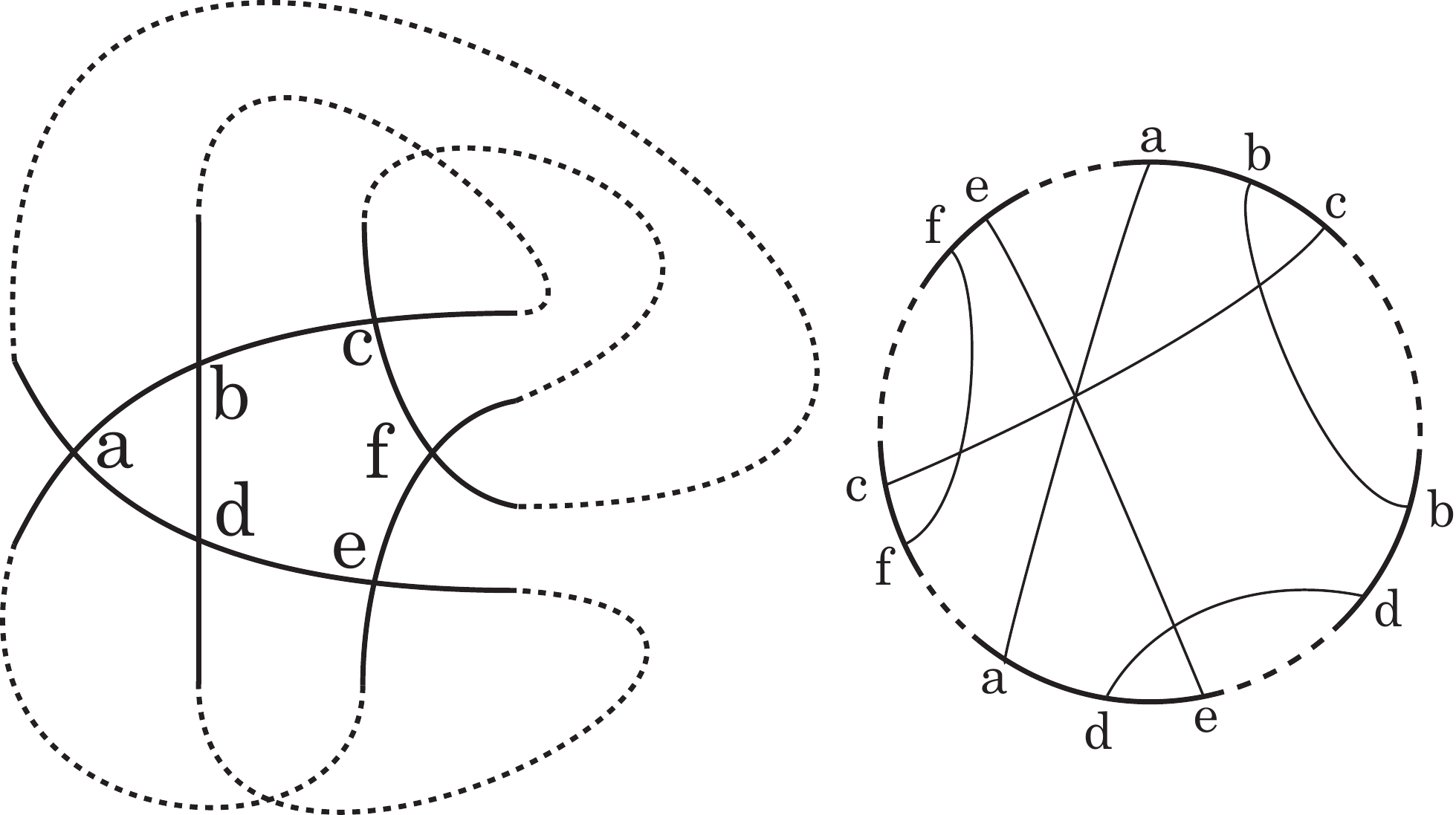} &
\includegraphics[width=3.5cm]{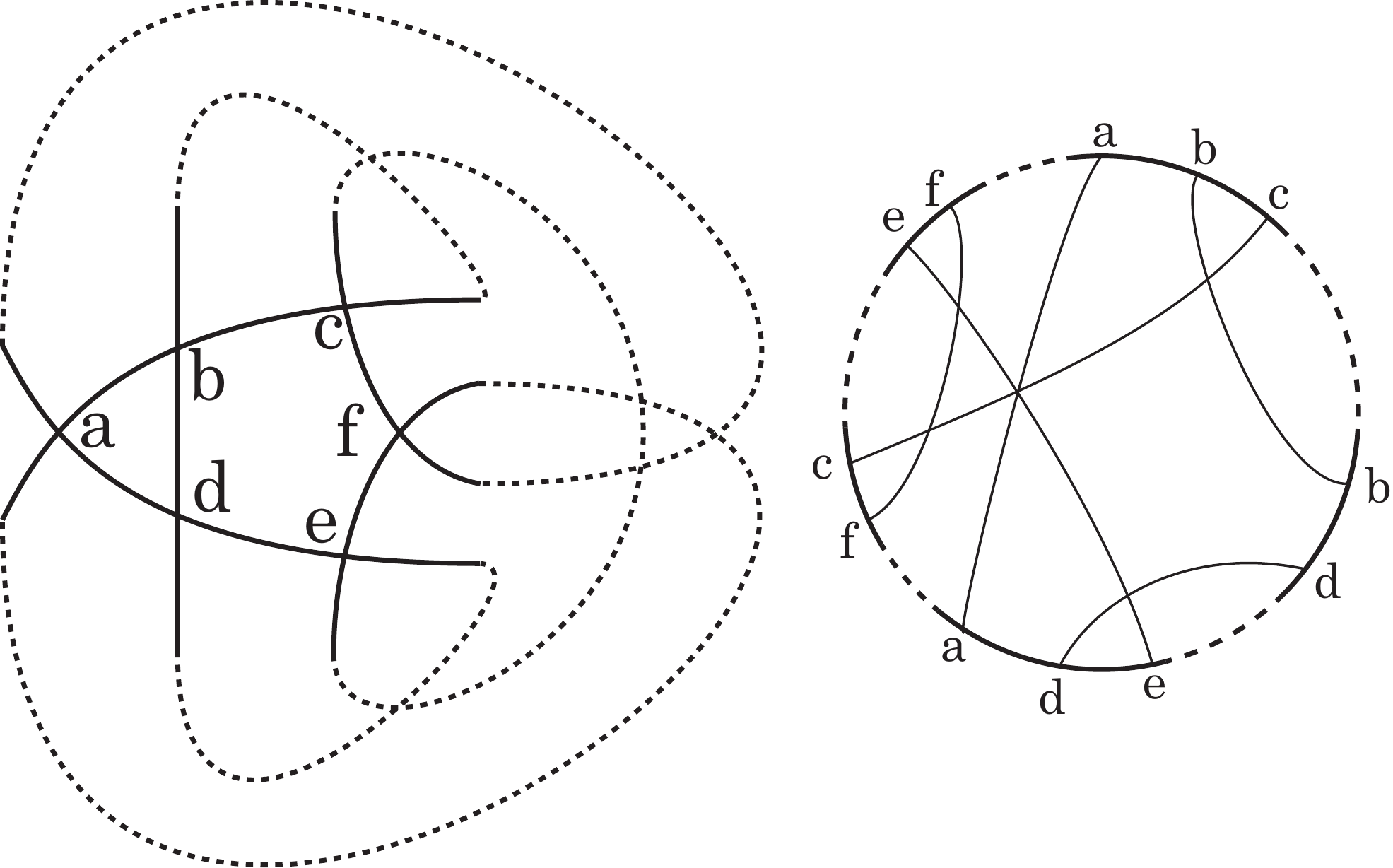} \\ \hline
Case 7 & Case 8 & \\ \hline
\includegraphics[width=3.5cm]{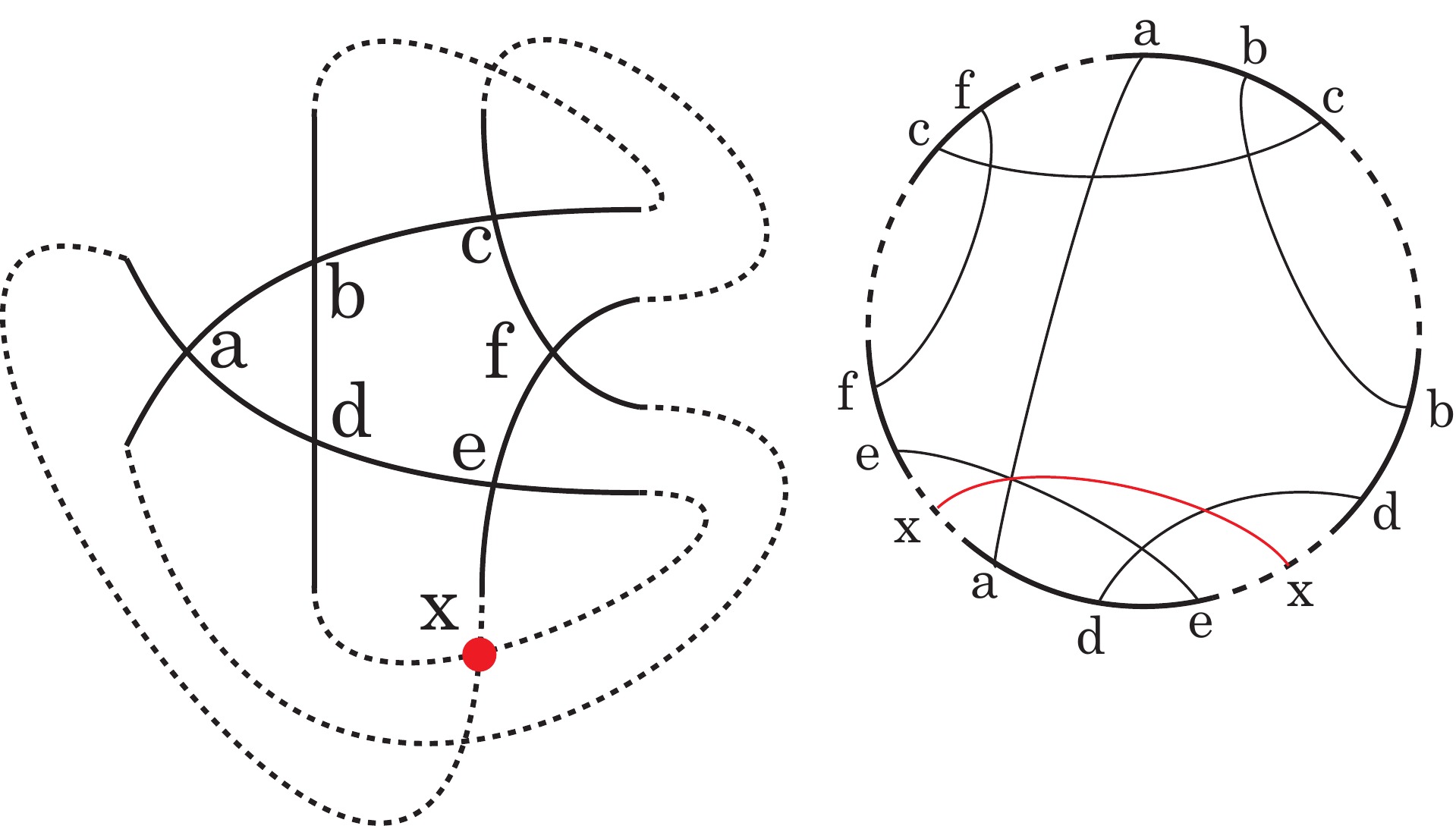} & \includegraphics[width=3.5cm]{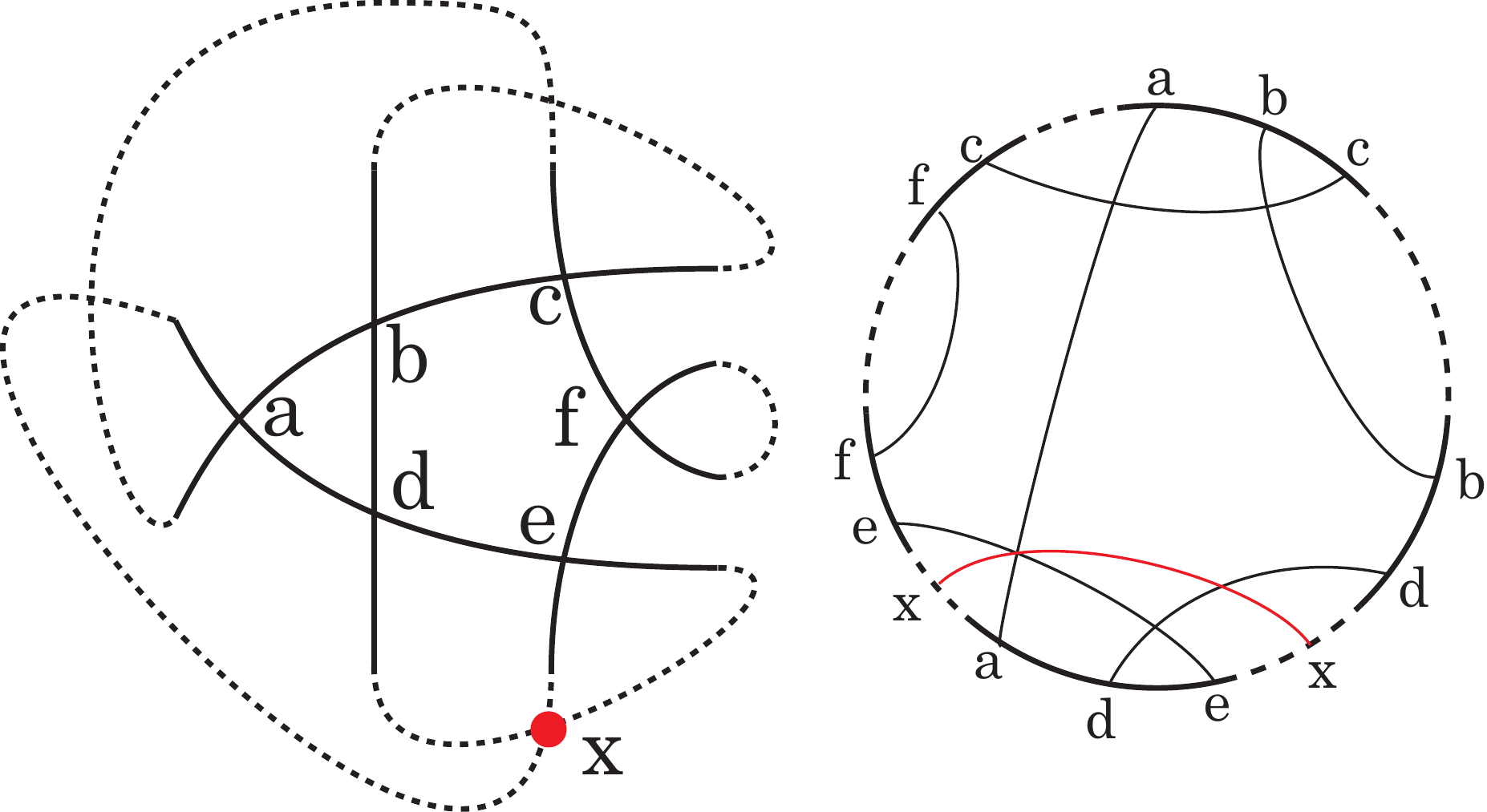} & \\ \hline
\end{tabular}
\end{table}

\noindent $\bullet$ {\bf{Cases 9--18.}}  If arc number $2$ contains both arcs DG and FI, we can automatically fix connections (A, B) and (H, J) (Table~\ref{4a}, Cases 9--16).  Table \ref{splittingCase9--16} shows how arcs connect, considering all possibilities.  
%\begin{figure}[H]
 %\begin{center}
 %\includegraphics[width=3cm]{4b.pdf}
 %\end{center}
 %\caption{Cases 9--18.  Dotted arcs show the connections of branches.}
 %\label{4b}
%\end{figure}
\begin{table}
\caption{Method to split into Cases 9--16.}\label{splittingCase9--16}
\begin{tabular}{|c|c|} \hline
  $($C$,~ $D$) \begin{cases}
    ($G$,~ $F$) ($I$,~ $E$) & ($Case$~9) \\
    ($G$,~ $I$) ($F$,~ $E$) & ($Case$~10) 
  \end{cases}$
&
$ ($C$,~ $F$) \begin{cases}
    ($I$,~ $D$) ($G$,~ $E$) & ($Case$~11) \\
    ($I$,~ $G$) ($D$,~ $E$) & ($Case$~12)
  \end{cases}
$ \\ \hline
$ ($C$,~ $G$) \begin{cases}
    ($D$,~ $F$) ($I$,~ $E$) & ($Case$~13) \\
    ($D$,~ $I$) ($F$,~ $E$) & ($Case$~14)
  \end{cases}$
&
$($C$,~ $I$) \begin{cases}
    ($F$,~ $D$) ($G$,~ $E$) & ($Case$~15) \\
    ($F$,~ $G$) ($D$,~ $E$) & ($Case$~16) 
  \end{cases}$ \\ \hline
  \end{tabular}
  \end{table}
Except for Cases 10 and 16, the existence of a triple chord is directly proved by Table \ref{cases9--16}.   
\begin{table}
\caption{Cases easily proved: Case 9, Cases 11--15.  Non-easy cases: Case 10 and its additional figure Case 10a, Case 16.}\label{cases9--16}
\begin{tabular}{|c|c|c|} \hline
Case 9 & Case 11 & Case 12  \\ \hline
\includegraphics[width=3.5cm]{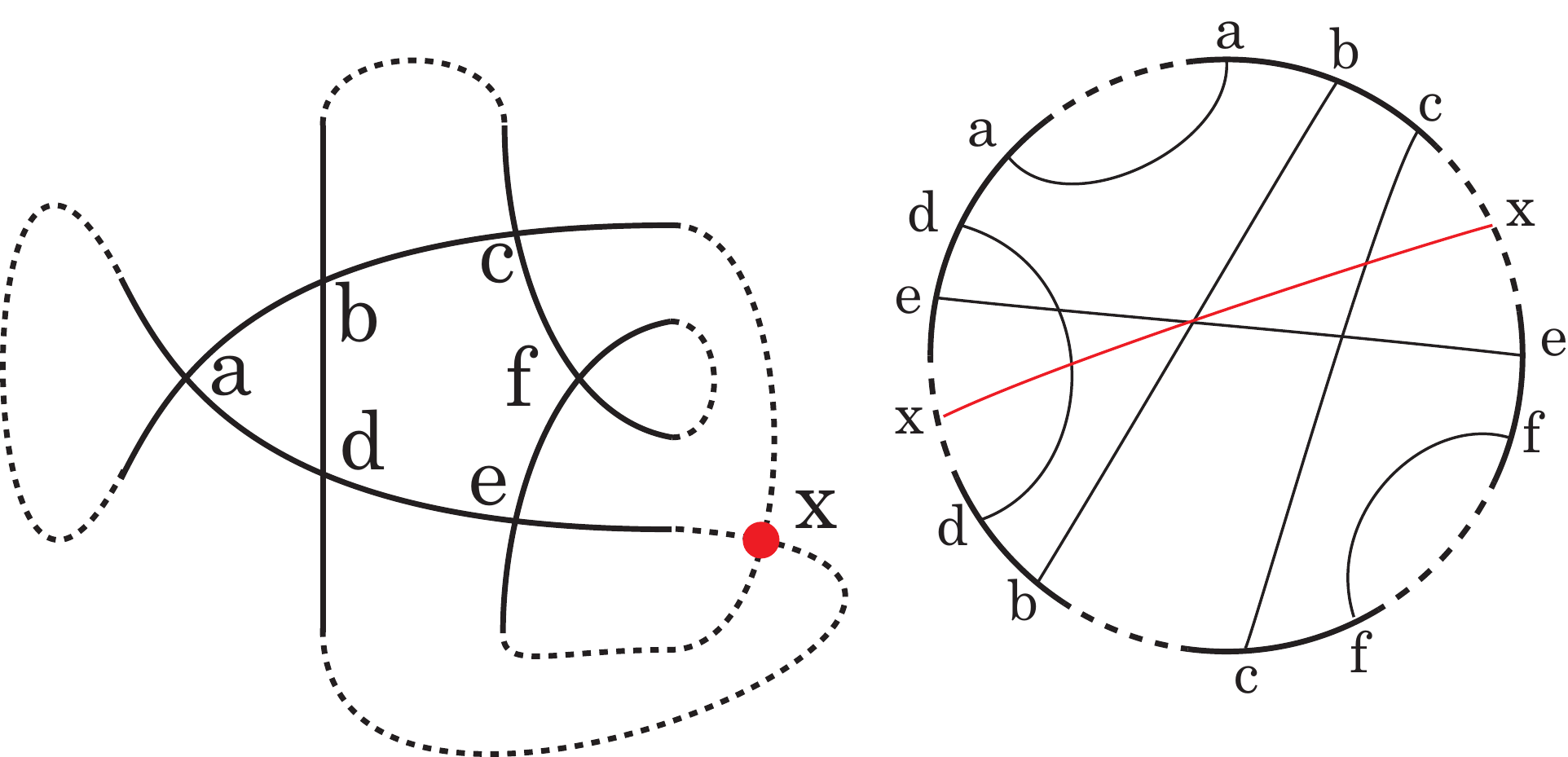} & \includegraphics[width=3.5cm]{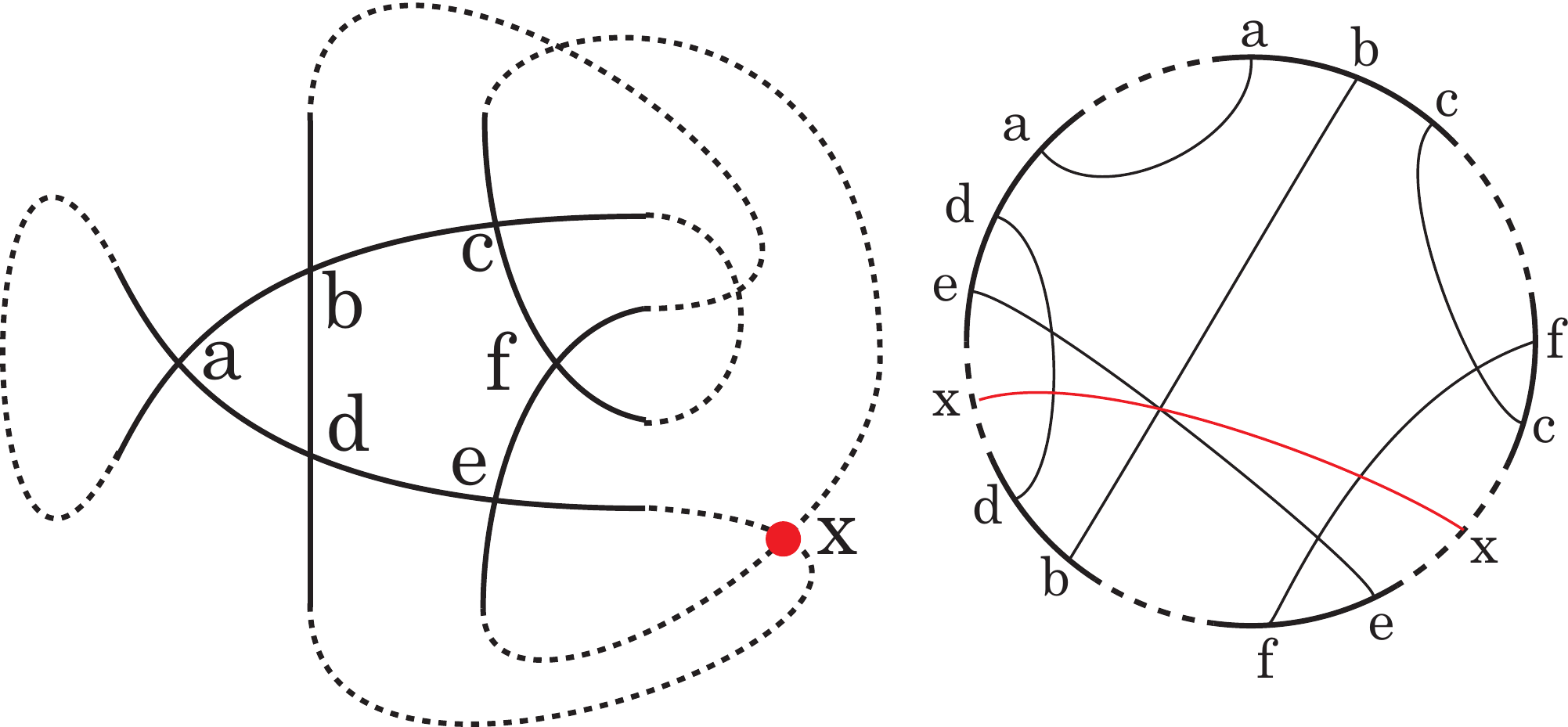} & \includegraphics[width=3.5cm]{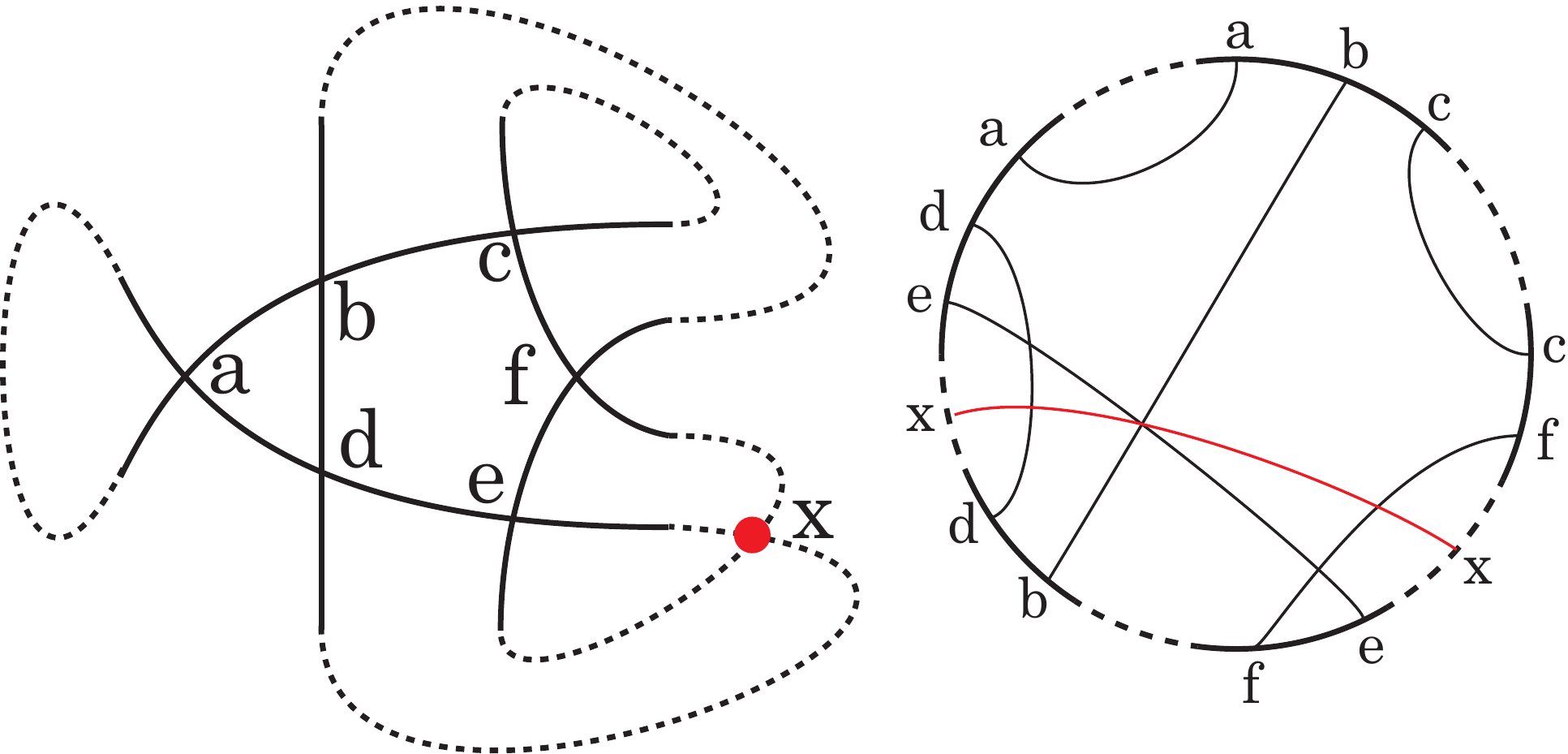} \\ \hline
Case 13 & Case 14 & Case 15 \\ \hline
\includegraphics[width=3.5cm]{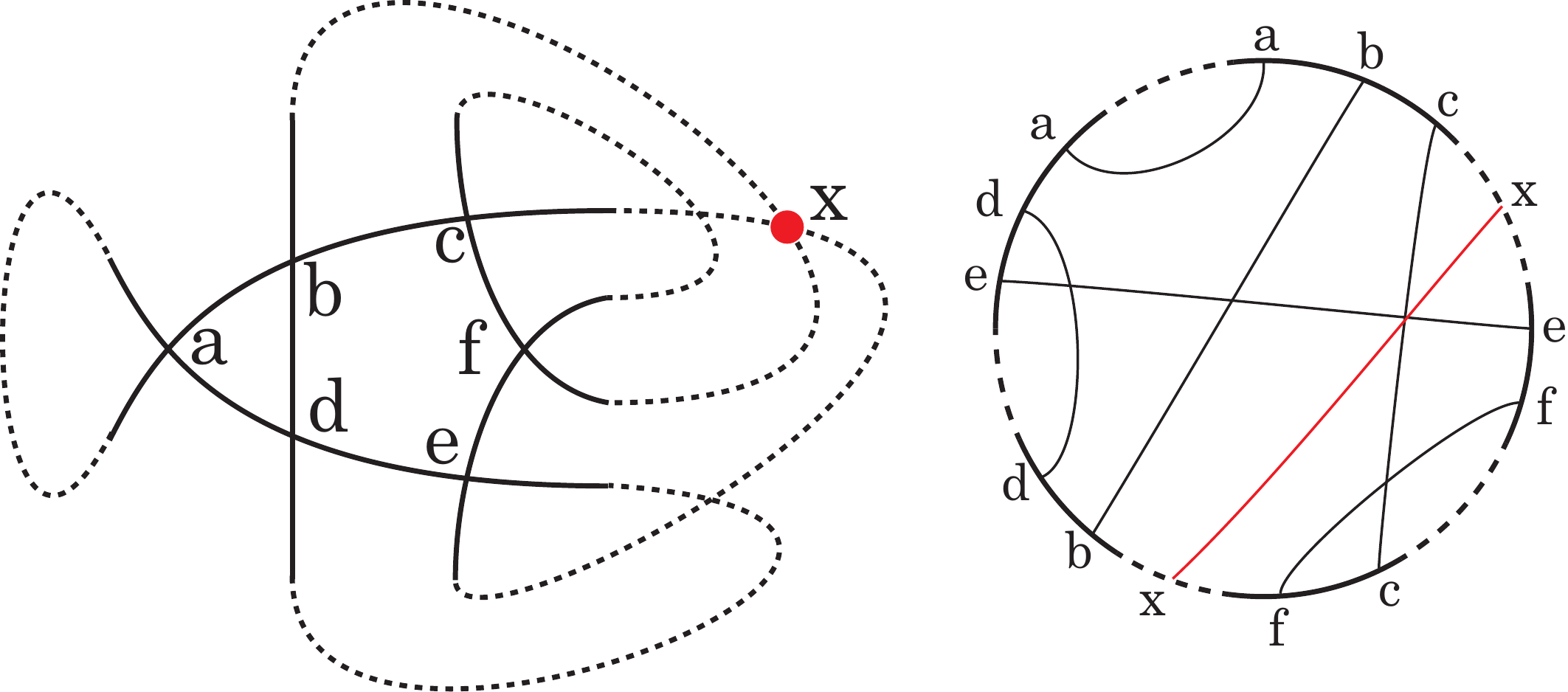} & \includegraphics[width=3.5cm]{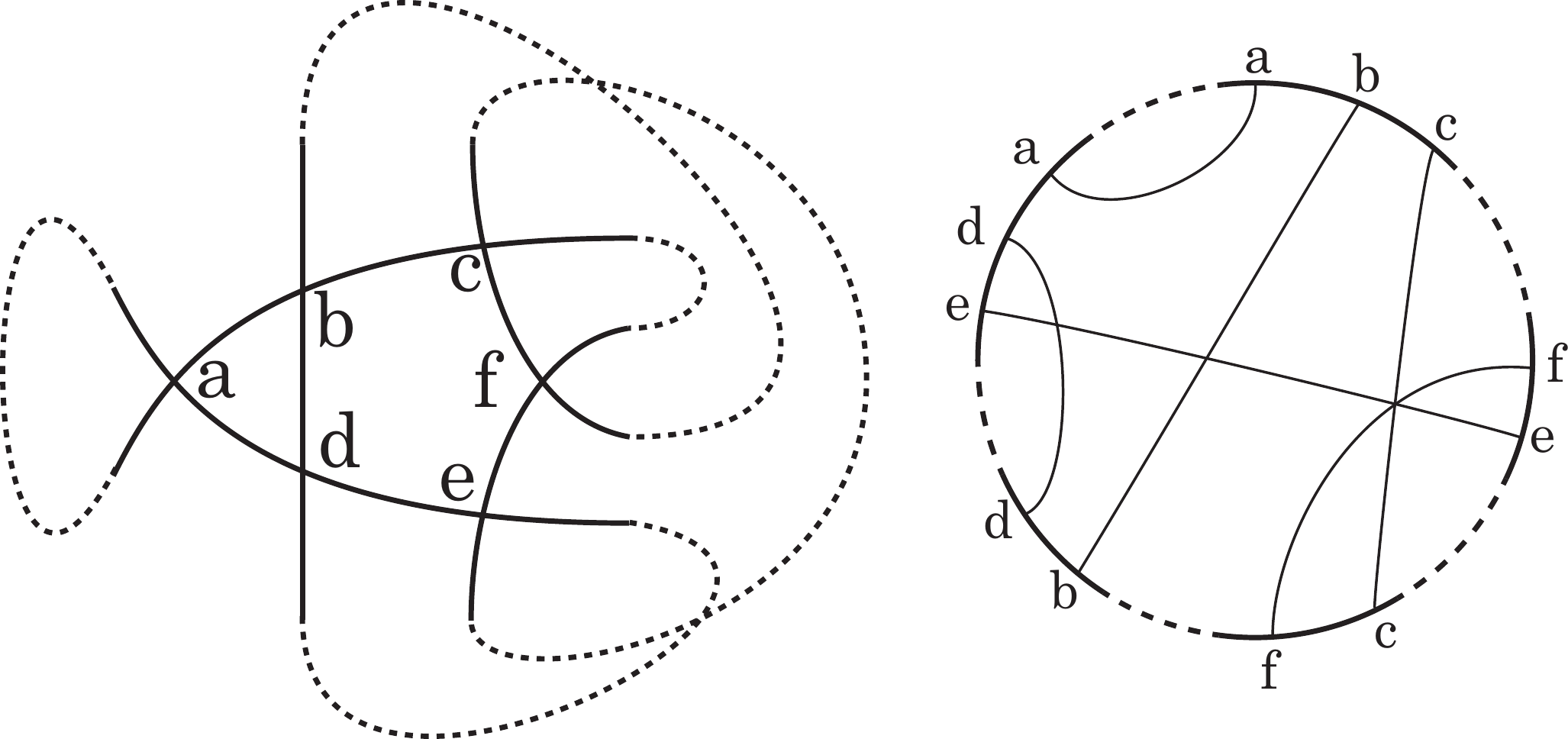} & \includegraphics[width=3.5cm]{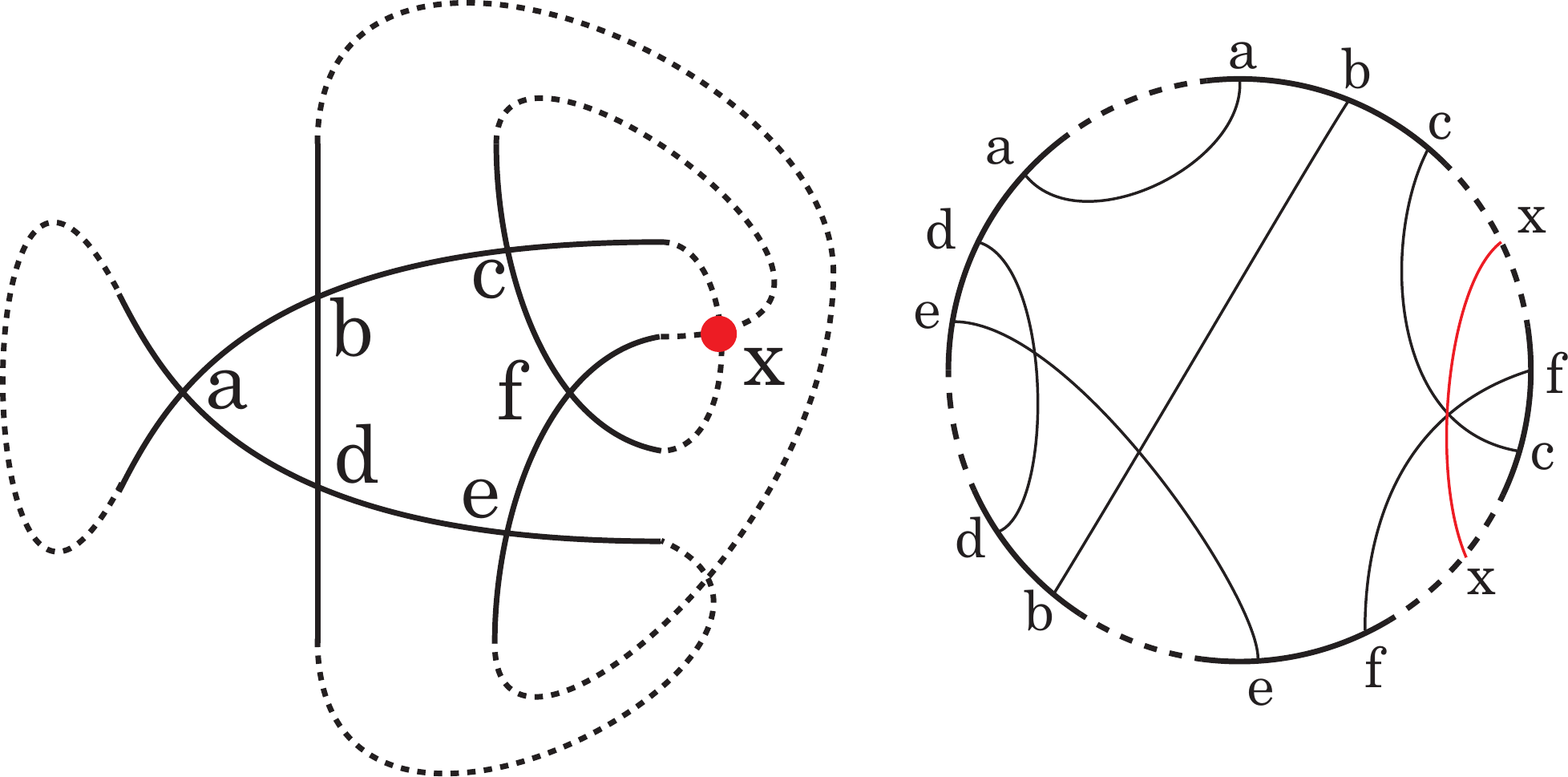} \\ \hline \hline
Case 10 & Case 10a & Case 16 \\ \hline
\includegraphics[width=3.5cm]{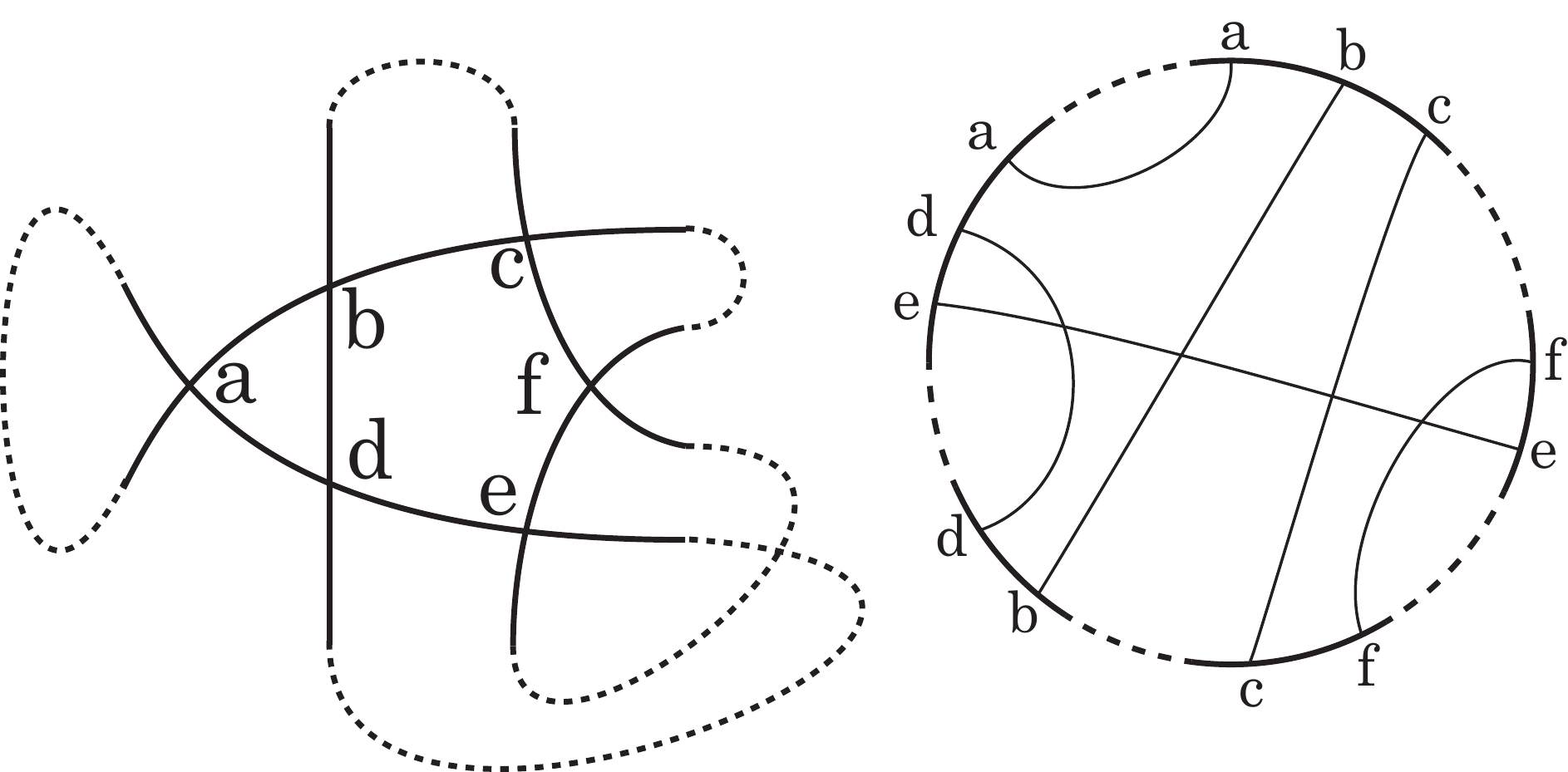} & \includegraphics[width=3.5cm]{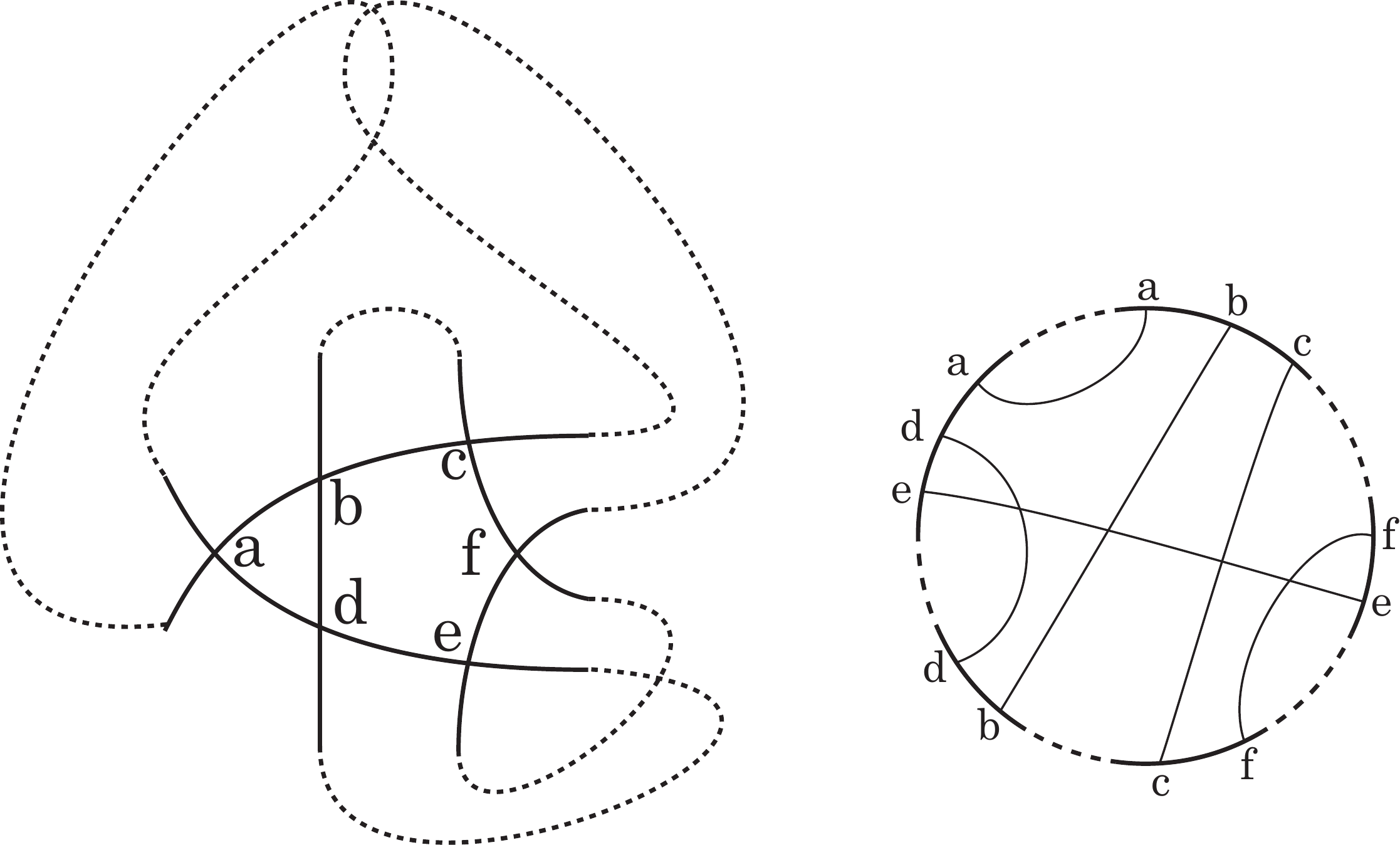} & \includegraphics[width=3.5cm]{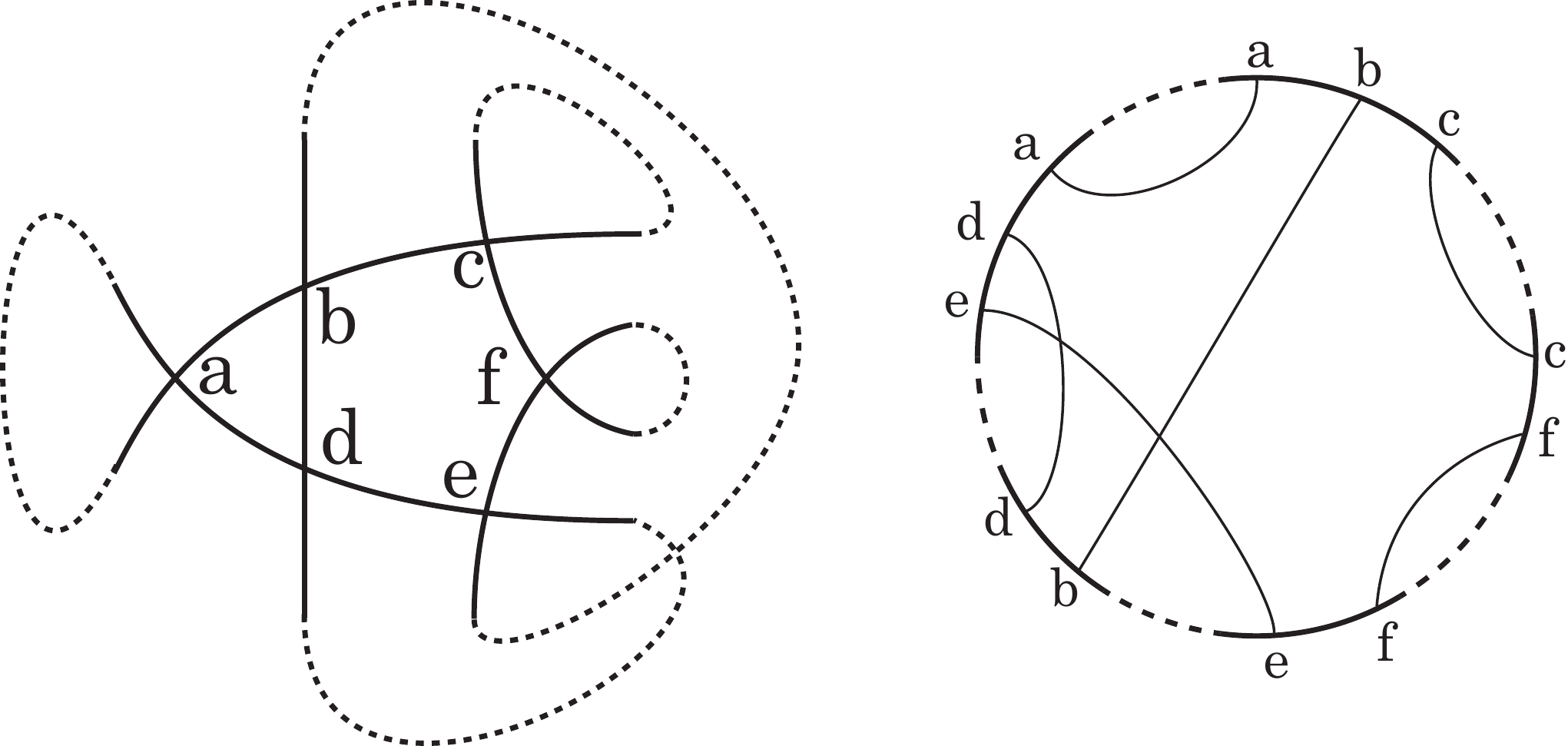} \\ \hline
\end{tabular}
\end{table}

\noindent $\bullet$ {\bf{Case 10}} (not easily proved). Observe the figure in Case 10 on the bottom line of Table \ref{cases9--16}.   
%Fig.~\ref{case10}.  
%\begin{table}
%\begin{tabular}{|c|c|} \hline
%\includegraphics[width=5cm]{case10.pdf}
%&
%\includegraphics[width=5cm]{case10a.pdf}
%\\ \hline
%\end{tabular}
%\caption{Case 10.}\label{case10t}
%\end{table}
First, this knot projection $P$ is a prime knot projection with no $1$- or $2$-gons.  Thus, $P$ is reduced (Lemma \ref{lem_reduced}).  Therefore, the dotted arc (a$\sim$a) must intersect at least one of the other dotted arcs.  If (a$\sim$a) intersects (b$\sim$c), (d$\sim$e), or (e$\sim$f), then $P$ has a triple chord in $CD_P$.  Therefore, we can assume that (a$\sim$a) intersects (c$\sim$f).  In this case, observe the figure in Case 10a on the bottom line of Table~\ref{cases9--16}.  

Next, since the knot projection we considered is a prime knot projection with no $1$- or $2$-gons, the dotted arc (b$\sim$c) intersects at least one of the other dotted arcs.  
\begin{itemize}
\item If (b$\sim$c) intersects (a$\sim$a) or (c$\sim$f), then $P$ has triple chords in $CD_P$.   
\item If (b$\sim$c) intersects (e$\sim$f), but not (a$\sim$a) or (c$\sim$f), then (e$\sim$f) intersects (a$\sim$a) or (c$\sim$f).  However, in each of these two cases, $P$ has a triple chord in $CD_P$.  
\item If (b$\sim$c) intersects (d$\sim$e), but not (a$\sim$a) or (c$\sim$f), then (d$\sim$e) intersects (a$\sim$a) or (c$\sim$f).  However, in each of these two cases, $P$ has a triple chord in $CD_P$.  
\end{itemize}
Therefore, when (b$\sim$c) intersects another dotted arc, a knot projection $P$ that we considered has a triple chord in $CD_P$.  

\noindent $\bullet$ {\bf{Case 16}} (not easily proved). Observe the right-bottom figure of Table~\ref{cases9--16}. 
The existence of a triple chord this case is proved in the same way as Case D, by omitting dotted $1$-gons (f$\sim$f).  Compare Fig.~\ref{cased} with Case 16 in Table \ref{cases9--16}.  
%\begin{figure}[H]
% \begin{center}
% \includegraphics[width=3cm]{case16.pdf}
% \end{center}
% \caption{Case 16.}
% \label{case16}
%\end{figure}

\noindent $\bullet$ {\bf{Cases 17--24.}}  Dotted arc numbers $1$ and $2$ each contains an instance of DG and FI.  This case implies fixing (H, J), i.e.,~H must connect with J (Table~\ref{4a}, Cases 17--24).  In this case, Table \ref{table17--24} shows how the case is split into eight cases, and it is easy to show that each knot projection $P$ of those cases has a triple chord in $CD_P$.  See Table \ref{17-24}.  
%\begin{figure}[h!]
 %\begin{center}
 %\includegraphics[width=3cm]{4c.pdf}
 %\end{center}
 %\caption{Cases 17--24.  Dotted arcs show the connections of branches.}
 %\label{4c}
%\end{figure}
\begin{table}
\caption{Method to split into Cases 17--24.}\label{table17--24}
\begin{tabular}{|c|c|c|} \hline
$
  ($B$,~ $D$) ($G$,~ $A$) \begin{cases}
    ($C$,~ $F$) ($I$,~ $E$) & ($Case$~17) \\
    ($C$,~ $I$) ($F$,~ $E$) & ($Case$~18)
  \end{cases}
$
&
$
  ($B$,~ $G$) ($D$,~ $A$) \begin{cases}
    ($C$,~ $F$) ($I$,~ $E$) & ($Case$~19) \\
    ($C$,~ $I$) ($F$,~ $E$) & ($Case$~20)
  \end{cases}
$
\\ \hline
$
  ($B$,~ $F$) ($I$,~ $A$) \begin{cases}
    ($C$,~ $D$) ($G$,~ $E$) & ($Case$~21) \\
    ($C$,~ $G$) ($D$,~ $E$) & ($Case$~22)
  \end{cases}
$
&
$
  ($B$,~ $I$) ($F$,~ $A$) \begin{cases}
    ($C$,~ $D$) ($G$,~ $E$) & ($Case$~23) \\
    ($C$,~ $G$) ($D$,~ $E$) & ($Case$~24)
  \end{cases}$ \\ \hline
  \end{tabular}
\end{table}

\begin{table}
\caption{Cases easily proved.}\label{17-24}
\begin{tabular}{|c|c|c|} \hline
Case 17 & Case 18 & Case 19 \\ \hline
\includegraphics[width=3.5cm]{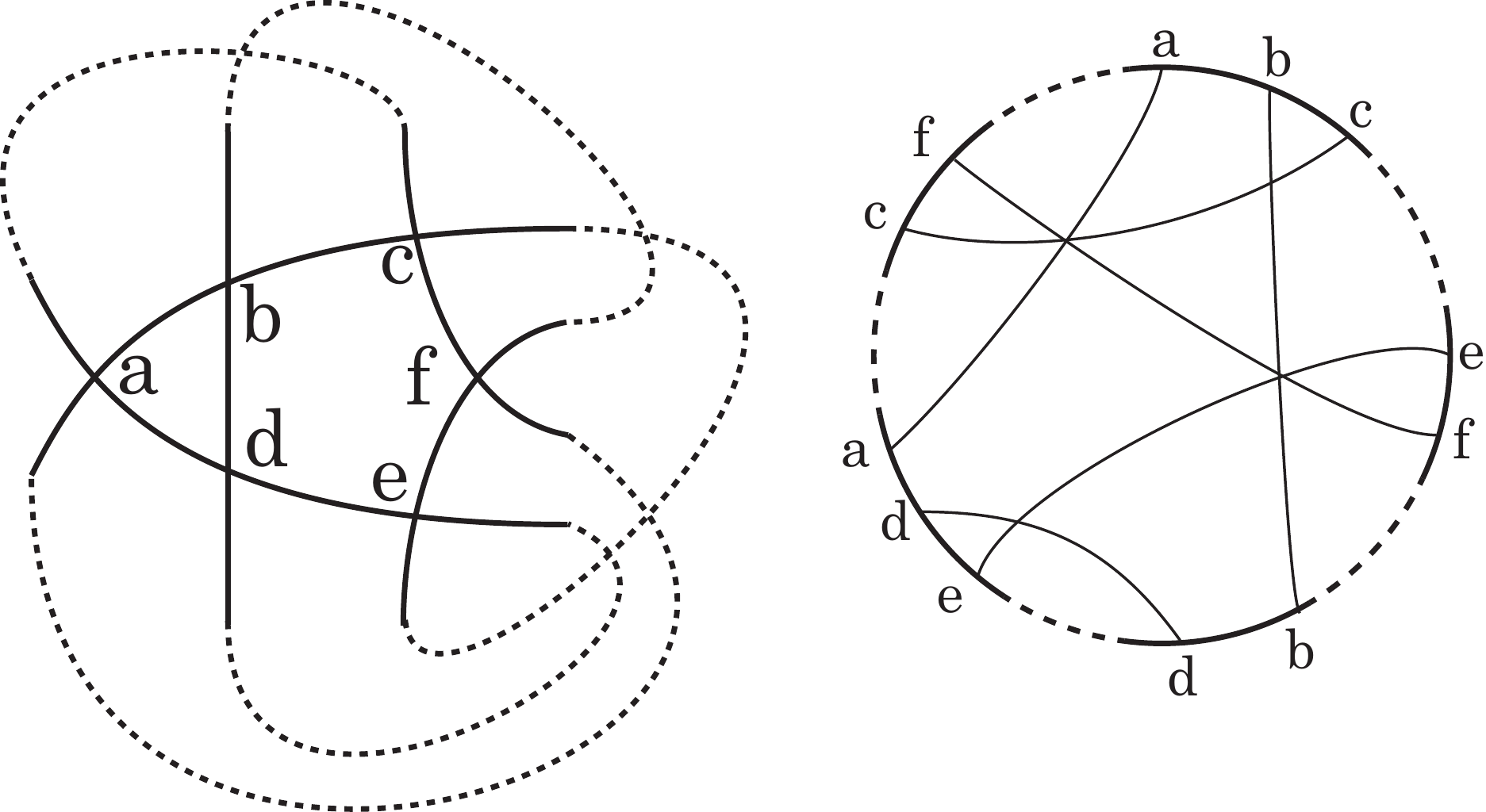}
&\includegraphics[width=3.5cm]{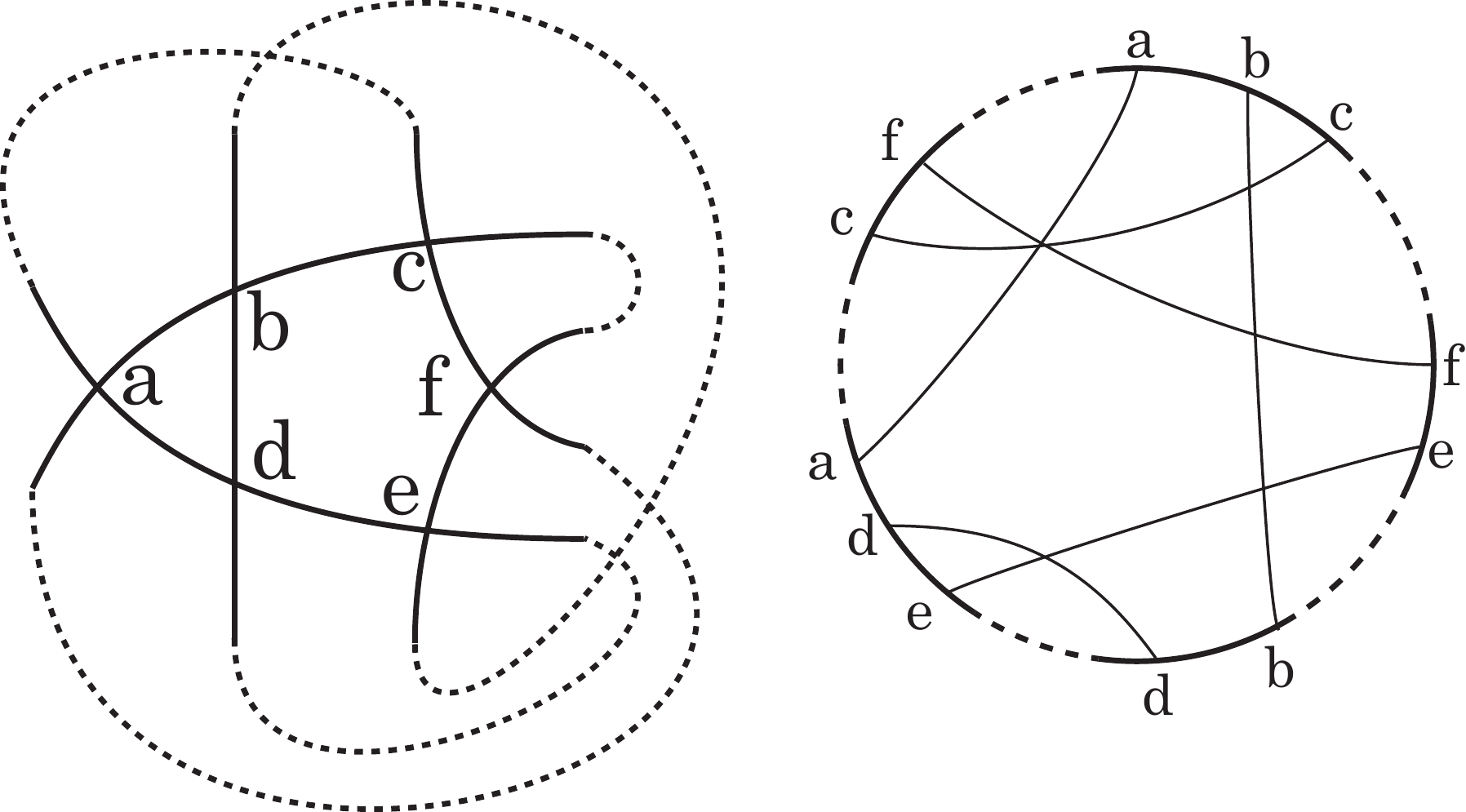}
&\includegraphics[width=3.5cm]{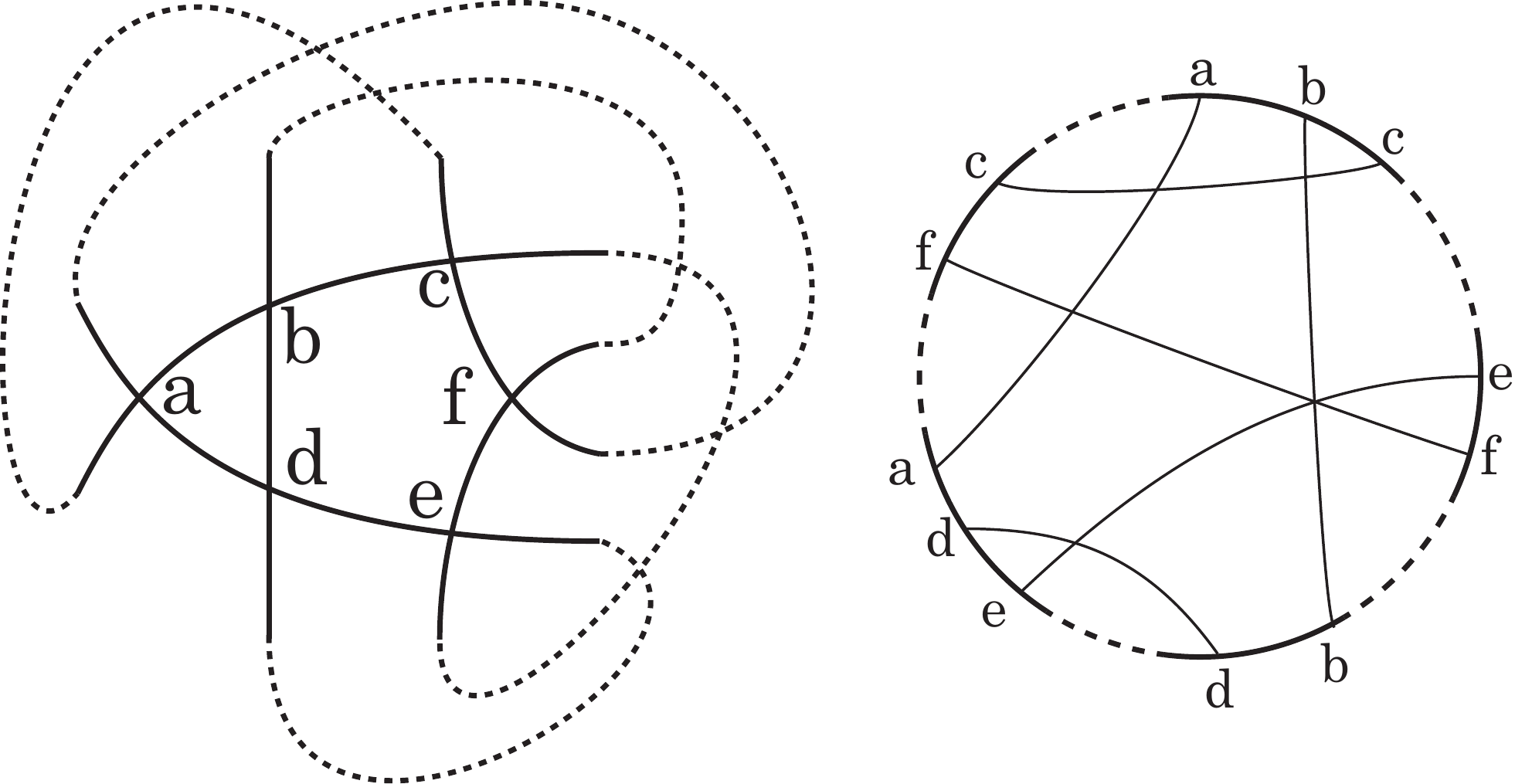}\\ \hline
Case 20& Case 21 & Case 22 \\ \hline
\includegraphics[width=3.5cm]{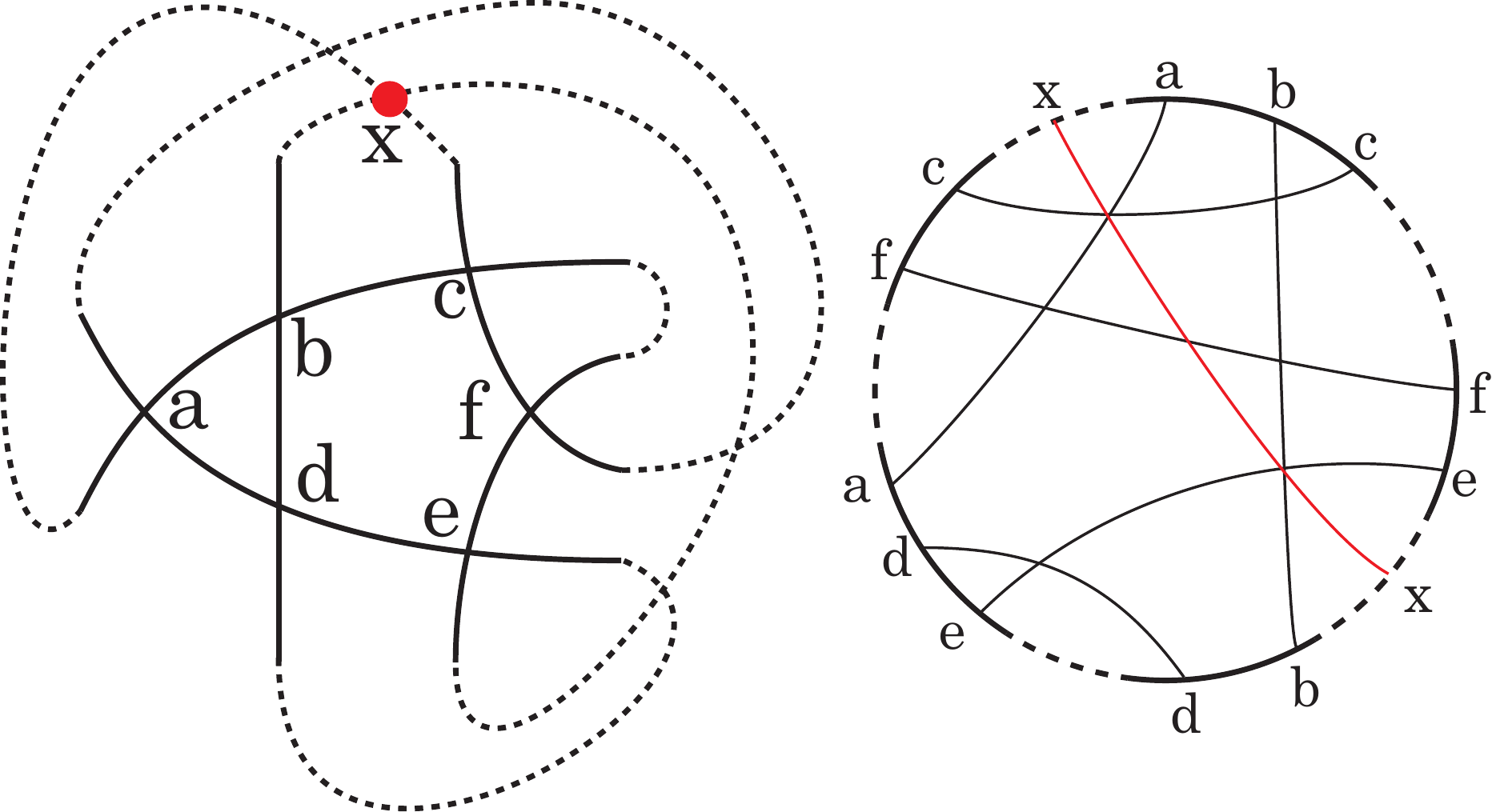}
&\includegraphics[width=3.5cm]{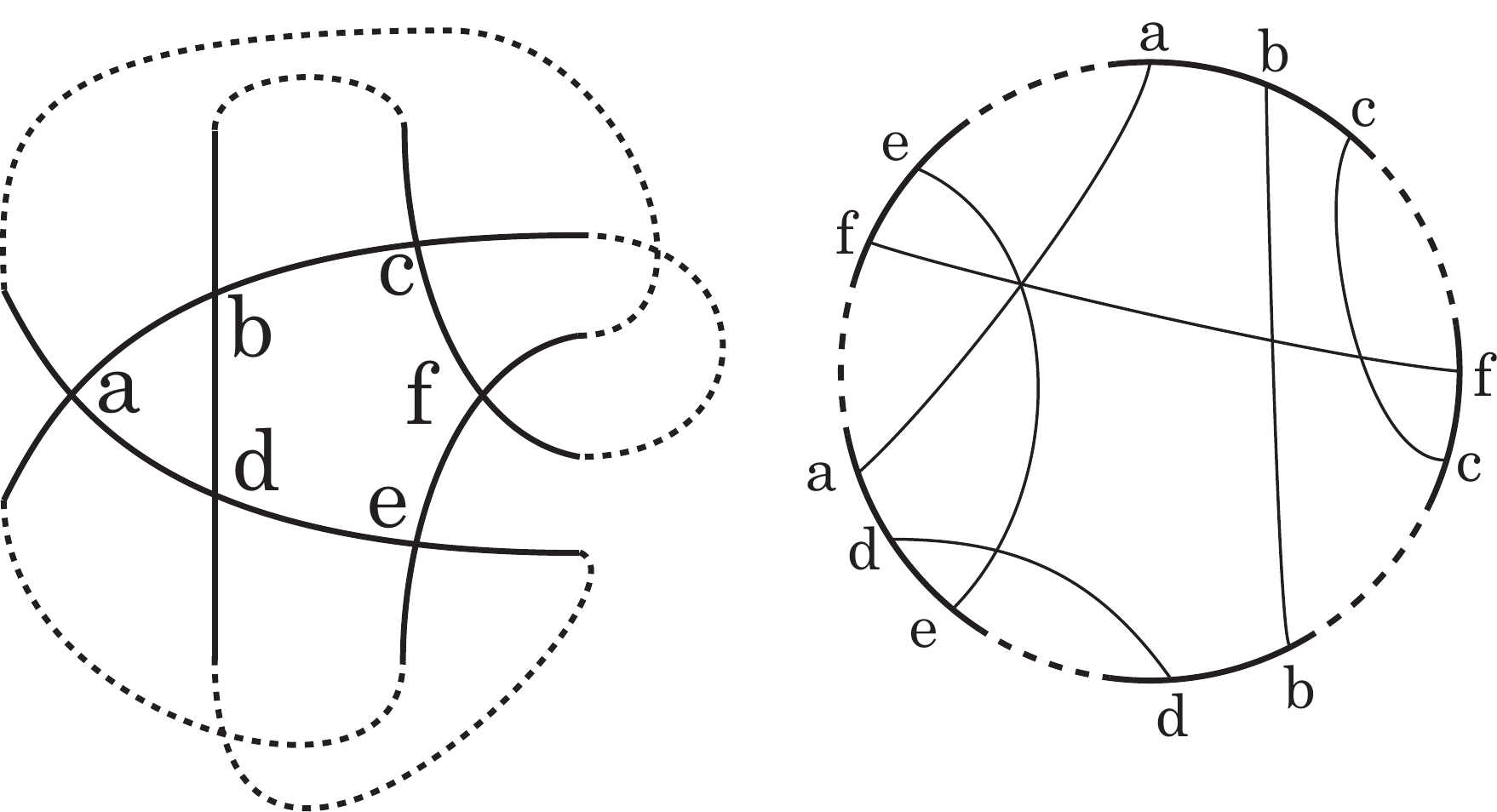}
&\includegraphics[width=3.5cm]{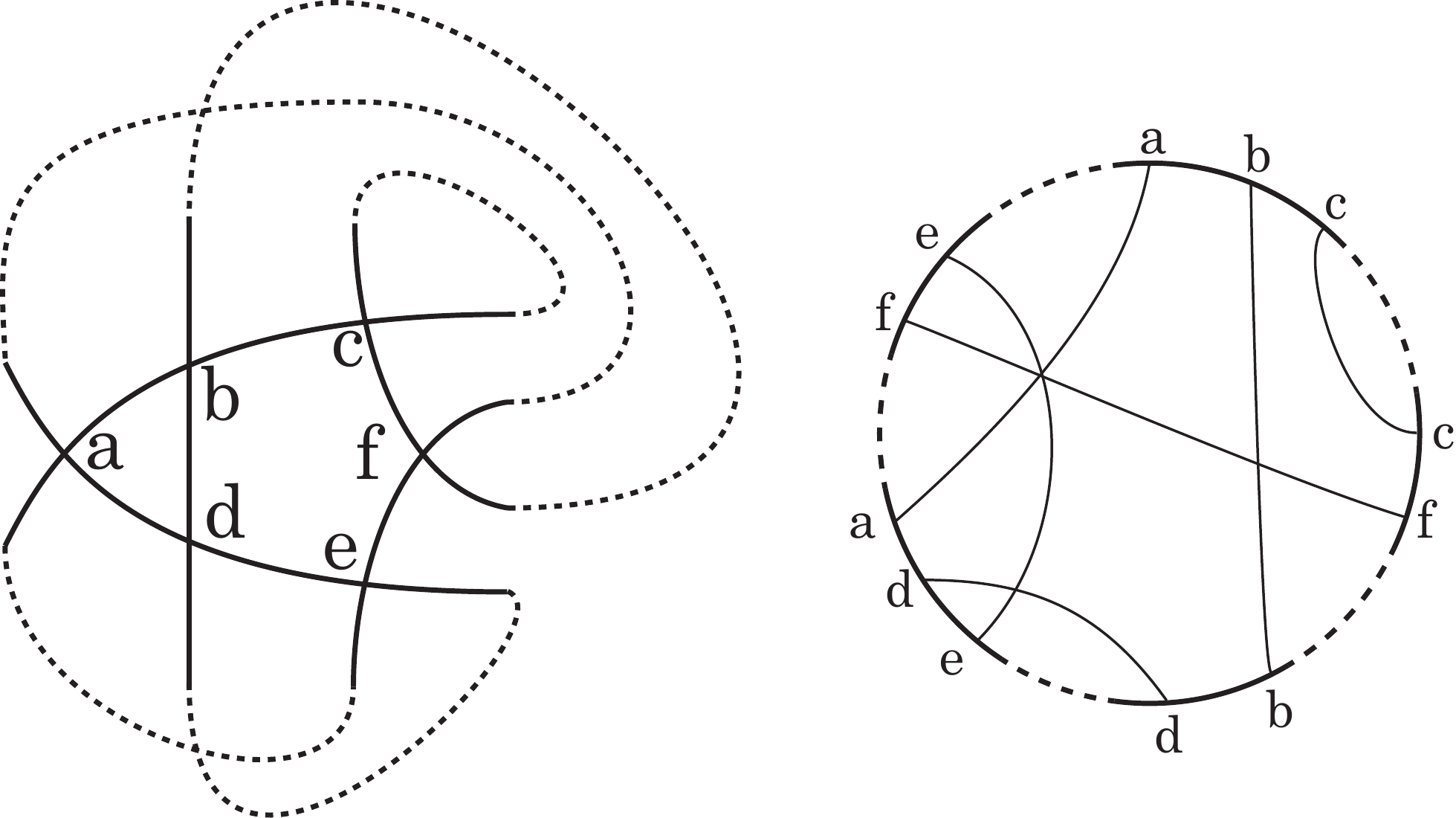}\\ \hline
Case 23 & Case 24 &  \\ \hline
\includegraphics[width=3.5cm]{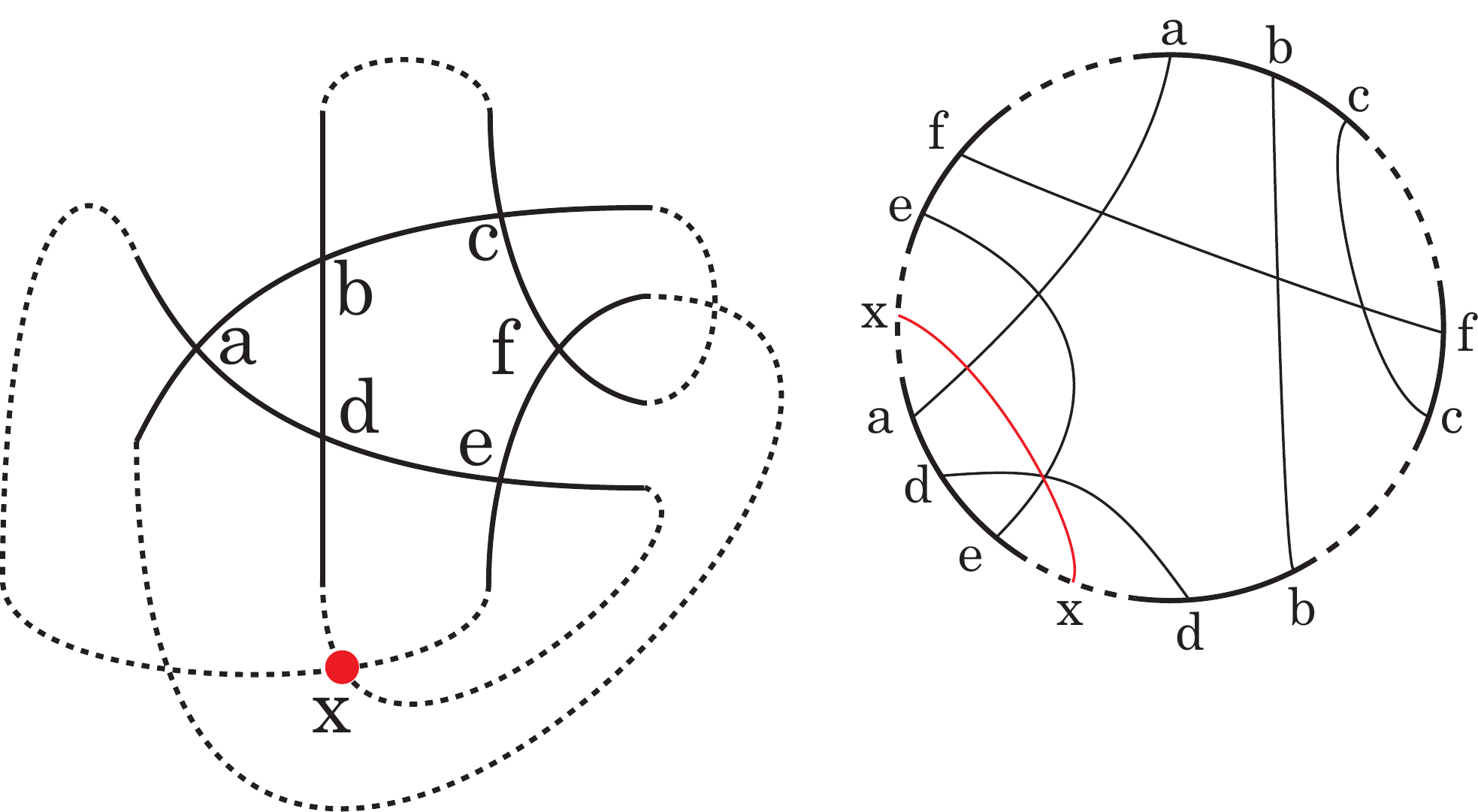}
&\includegraphics[width=3.5cm]{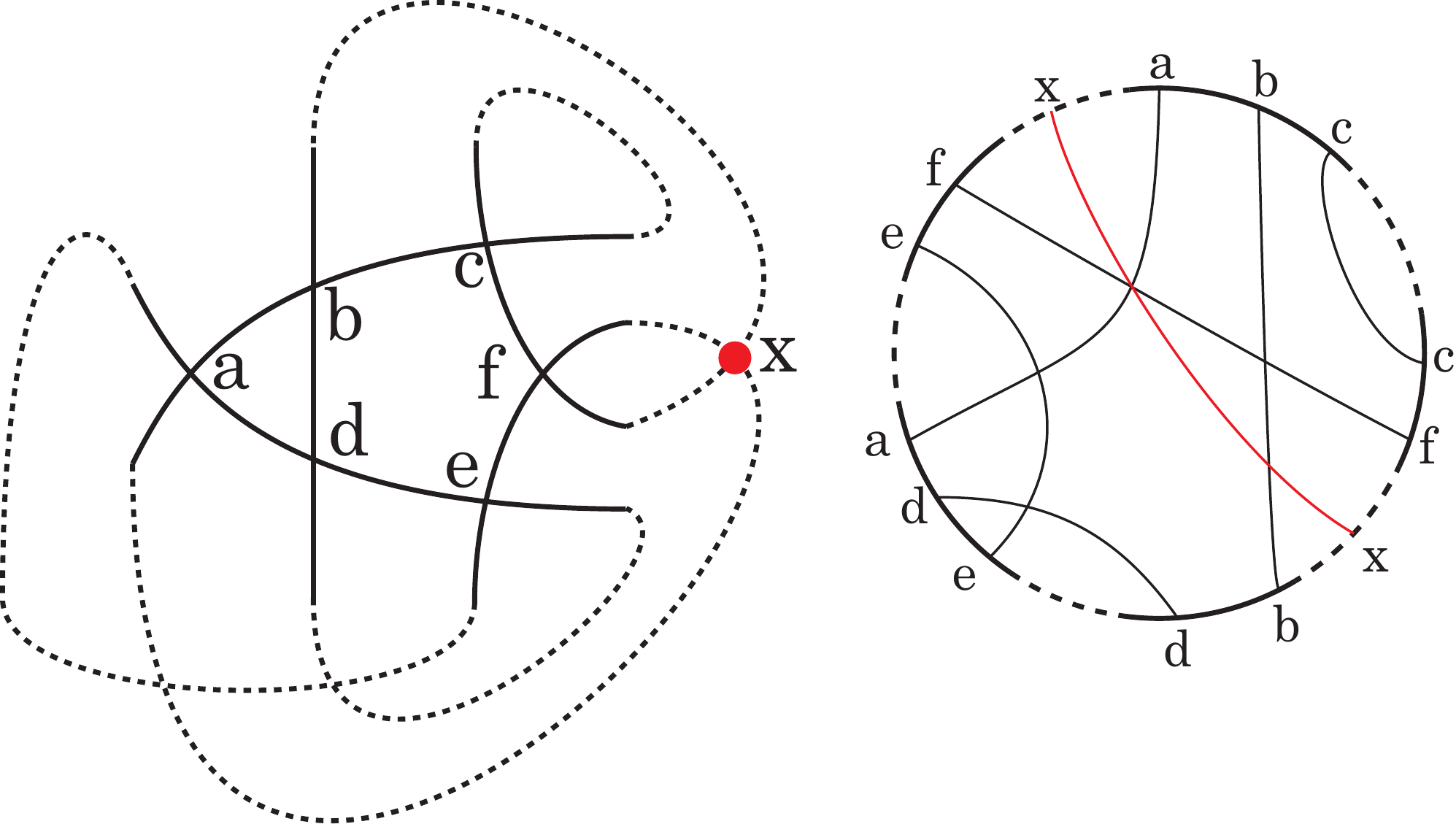}
& \\ \hline
\end{tabular}
\end{table}

\noindent $\bullet$ {\bf{Cases 25--32.}}  Dotted arc numbers 2 and 3 each contains exactly one of the non-dotted arcs.  Hence, we can automatically fix the connection (A, B) (Table~\ref{4a}, Cases 25--32).  Table \ref{splittingCases25--32} shows how arcs connect considering all possibilities.  
%\begin{figure}[H]
 %\begin{center}
 %\includegraphics[width=3cm]{4d.pdf}
 %\end{center}
 %\caption{Cases 25--32.  Dotted arcs show the connections of branches.}
 %\label{4c}
%\end{figure}
\begin{table}
\caption{Method to split into Cases 25--32.}\label{splittingCases25--32}
\begin{tabular}{|c|c|} \hline
  $($C$,~ $D$) ($G$,~ $E$) \begin{cases}
    ($H$,~ $F$) ($I$,~ $J$) & ($Case$~25.) \\
    ($H$,~ $I$) ($F$,~ $J$) & ($Case$~26.)
  \end{cases}
$
&
$
  ($C$,~ $G$) ($D$,~ $E$) \begin{cases}
    ($H$,~ $F$) ($I$,~ $J$) & ($Case$~27.) \\
    ($H$,~ $I$) ($F$,~ $J$) & ($Case$~28.)
  \end{cases}
$
\\ \hline
$
  ($C$,~ $F$) ($I$,~ $E$) \begin{cases}
    ($H$,~ $D$) ($G$,~ $J$) & ($Case$~29.) \\
    ($H$,~ $G$) ($D$,~ $J$) & ($Case$~30.)
  \end{cases}
$
&
$($C$,~ $I$) ($F$,~ $E$) \begin{cases}
    ($H$,~ $D$) ($G$,~ $J$) & ($Case$~31) \\
    ($H$,~ $G$) ($D$,~ $J$) & ($Case$~32)
  \end{cases}$ \\ \hline
\end{tabular}
\end{table}

\noindent $\bullet$ {\bf{Case 25}} (not easily proved). Observe the figure for Case 25 in the lower part of Table~\ref{cases25--32}. 
First, since the considered knot projection $P$ is reduced (cf.~Lemma~\ref{lem_reduced}), the dotted arc (a$\sim$a) intersects one of the other dotted arcs.  If (a$\sim$a) intersects (b$\sim$c) or (d$\sim$e), then $P$ has a triple chord in $CD_P$.  Therefore, we can assume that (a$\sim$a) intersects (c$\sim$f) (the figure Case 25a of Table~\ref{cases25--32}).  Here, note that the assumption that (a$\sim$a) intersects (e$\sim$f) is equivalent to the assumption that (a$\sim$a) intersects (c$\sim$f) by symmetry, hence we omit the case (a$\sim$a) intersects (e$\sim$f).  

%%\UTF{0081}i\UTF{0082}±\UTF{0082}±\UTF{0082}\UTF{00DC}\UTF{0082}\UTF{00C5}2014.5/30\UTF{0081}j%%%%

Next, consider Case 25a in Table \ref{cases25--32}.  Since $P$ is a prime knot projection with no $1$- or $2$-gons, (b$\sim$c) must intersect one of the other dotted arcs.  
\begin{itemize}
\item If (b$\sim$c) intersects (a$\sim$a) or (c$\sim$f), 
%or (e$\sim$f), 
then $P$ has a triple chord in $CD_P$.   
\item If (b$\sim$c) intersects (e$\sim$f), then $P$ has a triple chord in $CD_P$.  
\item If (b$\sim$c) intersects (d$\sim$e), but not (a$\sim$a) or (c$\sim$f), then (d$\sim$e) must intersect (a$\sim$a) or (c$\sim$f).  However, whether (d$\sim$e) intersects (a$\sim$a) or (c$\sim$f), $P$ has a triple chord in $CD_P$.  
\end{itemize}
Thus, if (b$\sim$c) intersects one of the other dotted arcs, a considered knot projection $P$ has a triple chord in $CD_P$.  

\begin{table}
\caption{Cases easily proved: Cases 26, 27 and Cases 29--32.  Non-easy cases: Case 25 and its additional figure Case 25a, Case 28.}\label{cases25--32}
\begin{tabular}{|c|c|c|} \hline
Case 26 &Case 27& Case 29 \\ \hline
\includegraphics[width=3.5cm]{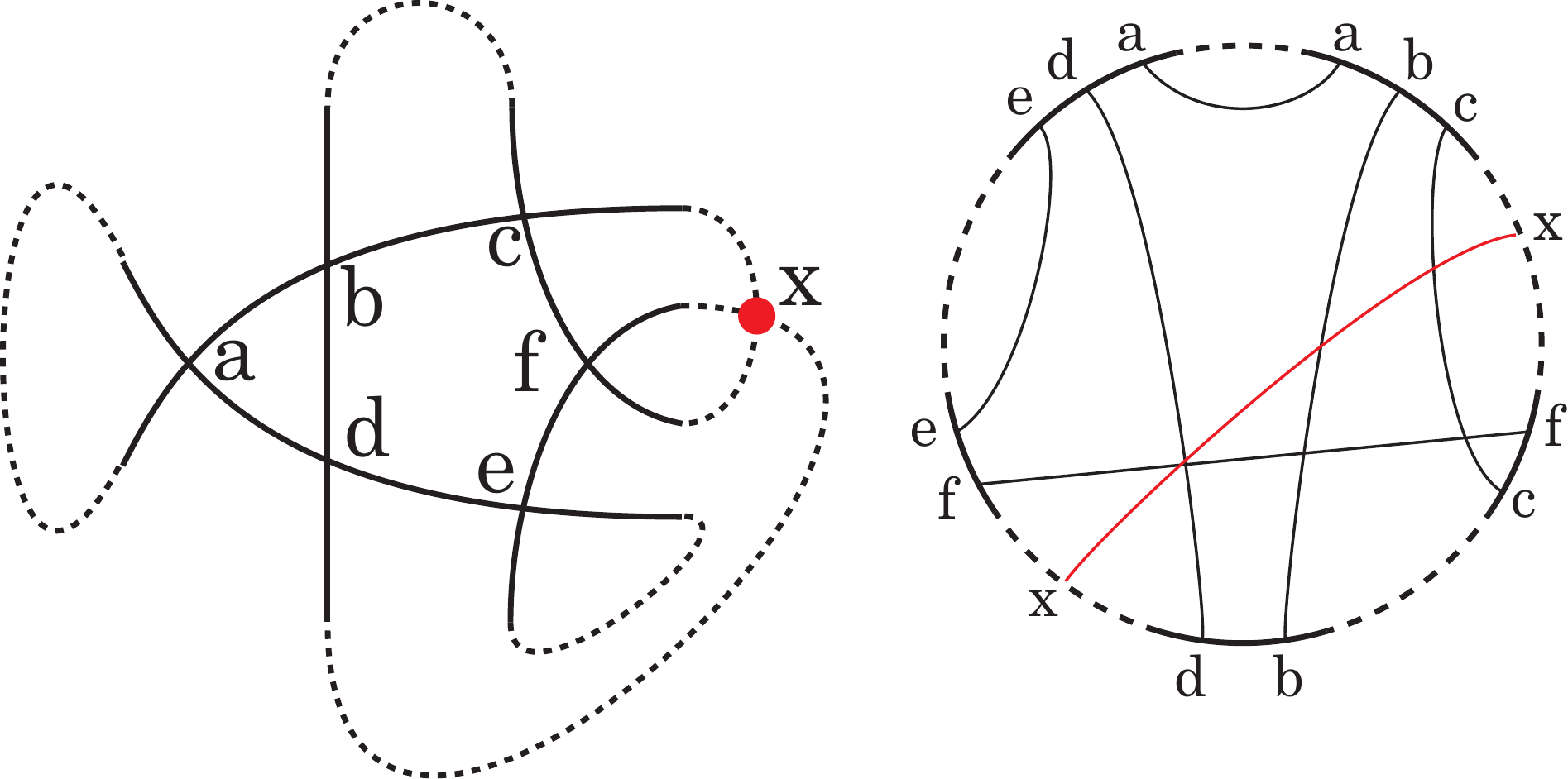} &\includegraphics[width=3.5cm]{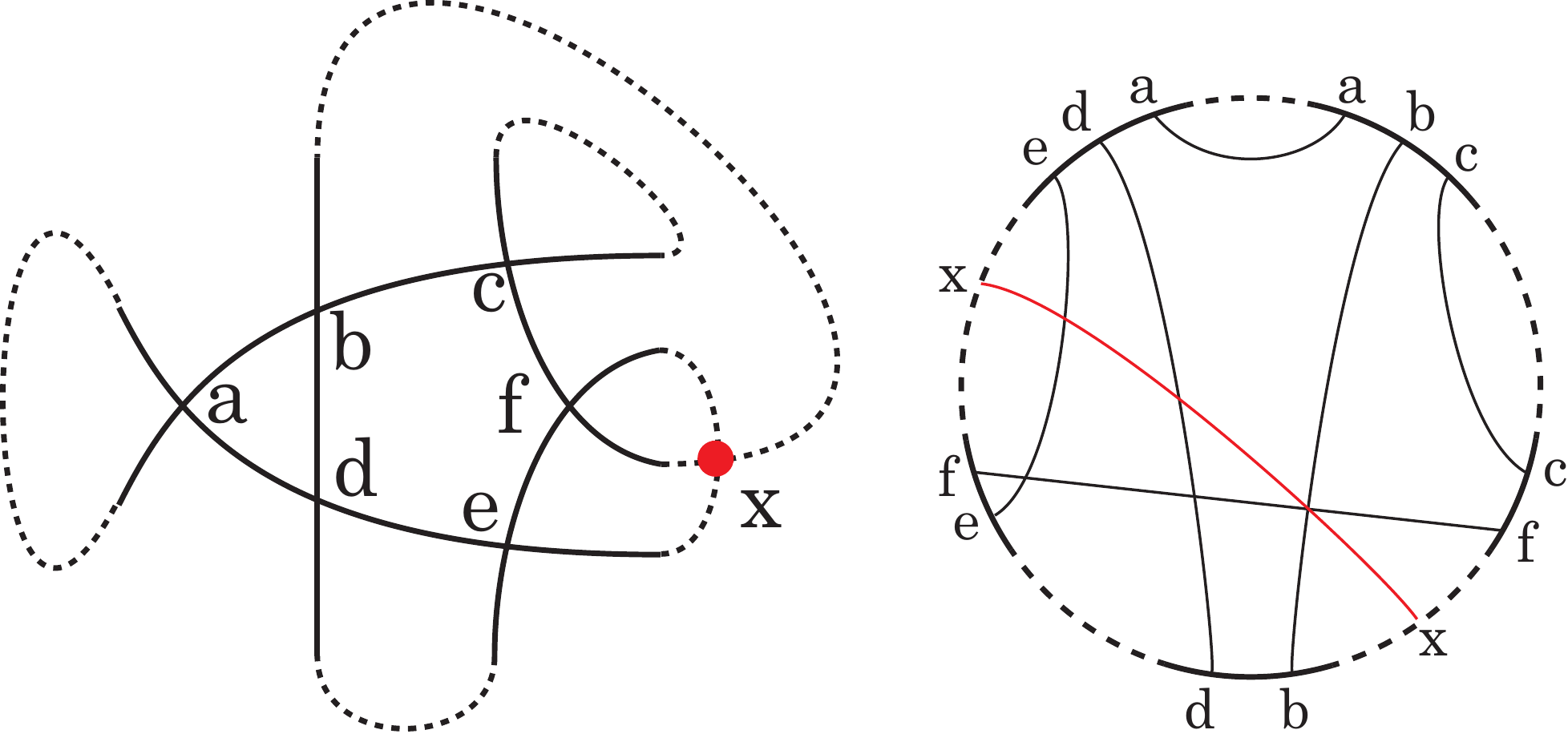}& \includegraphics[width=3.5cm]{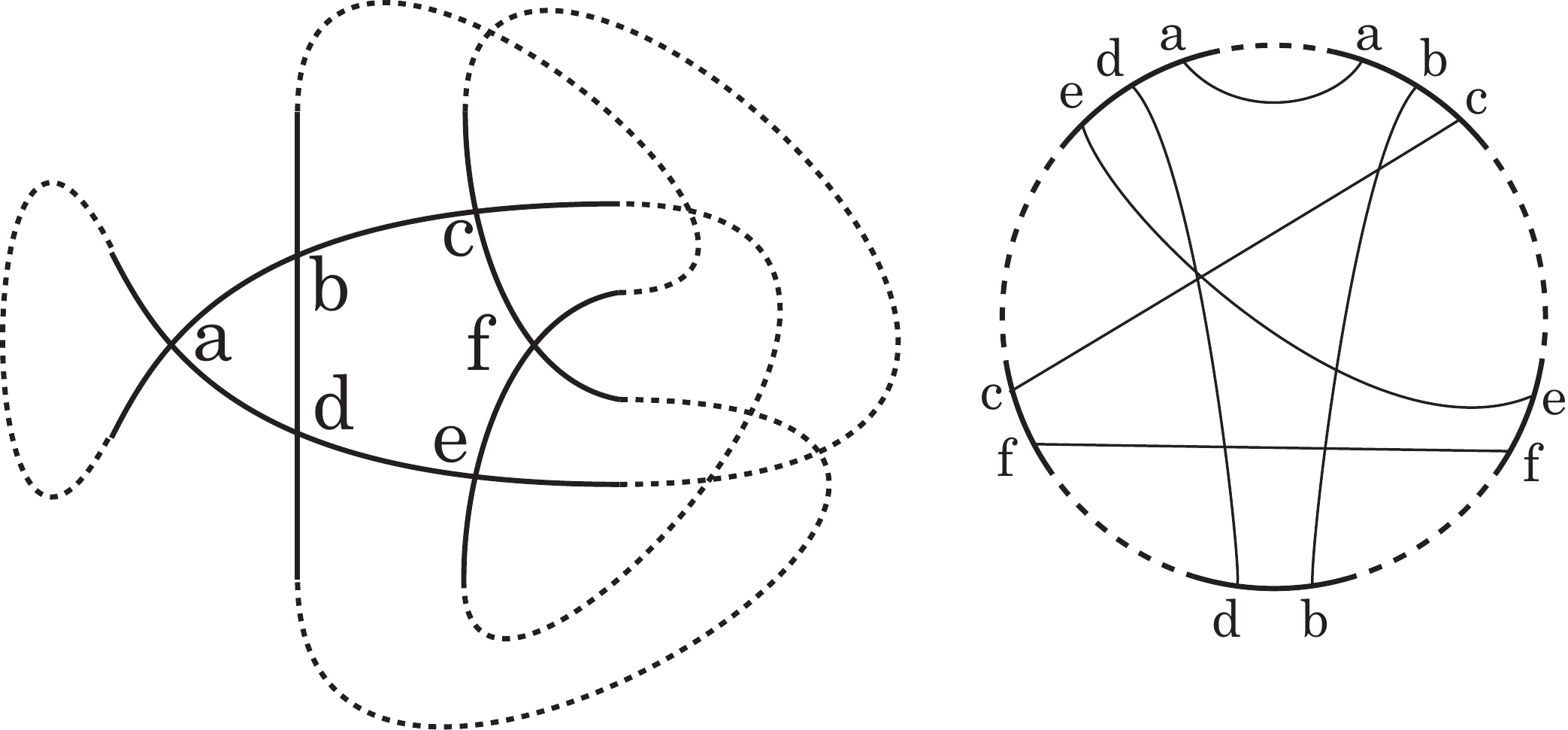} \\ \hline
Case 30 &Case 31 & Case 32 \\ \hline
\includegraphics[width=3.5cm]{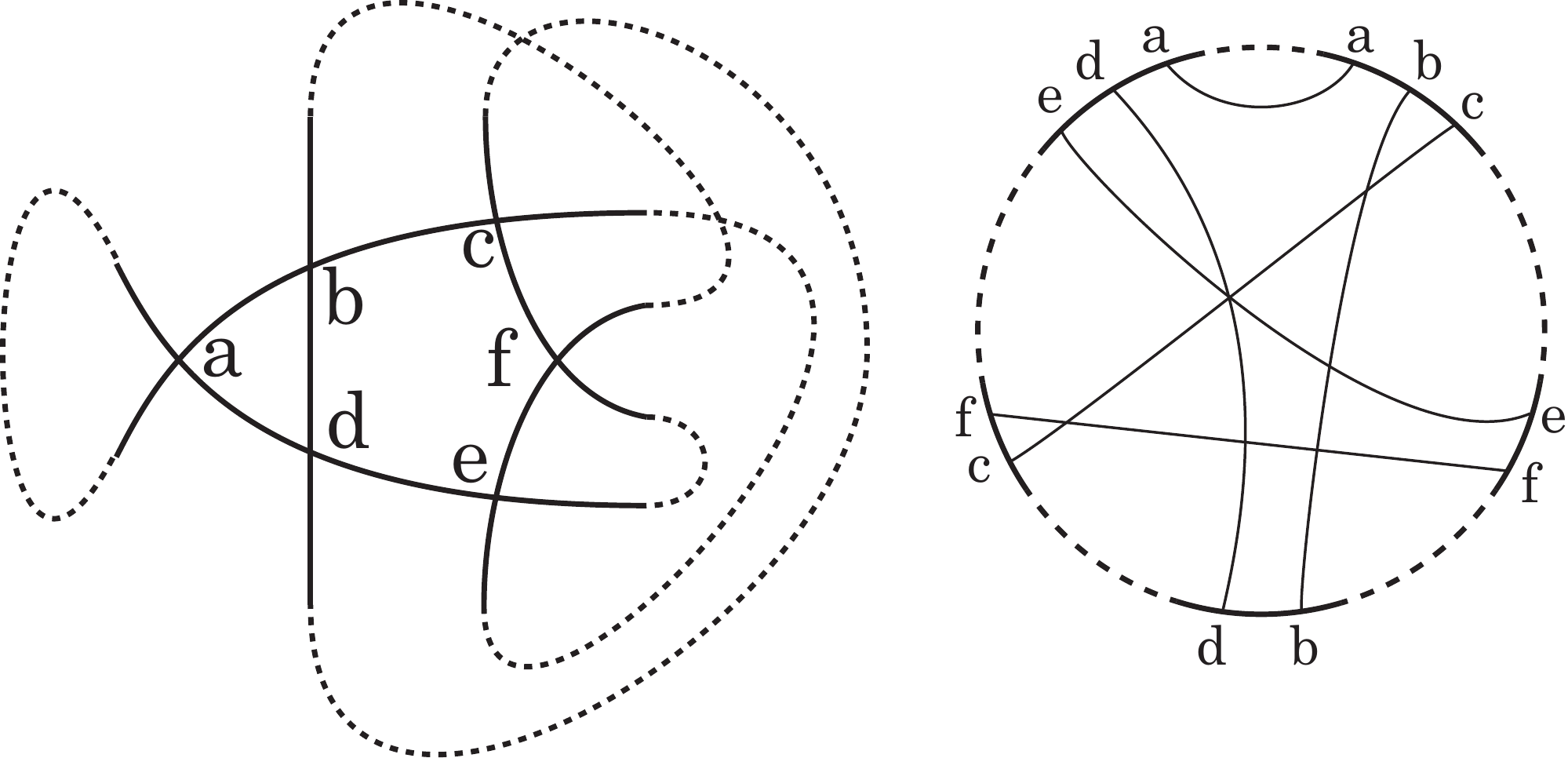} & \includegraphics[width=3.5cm]{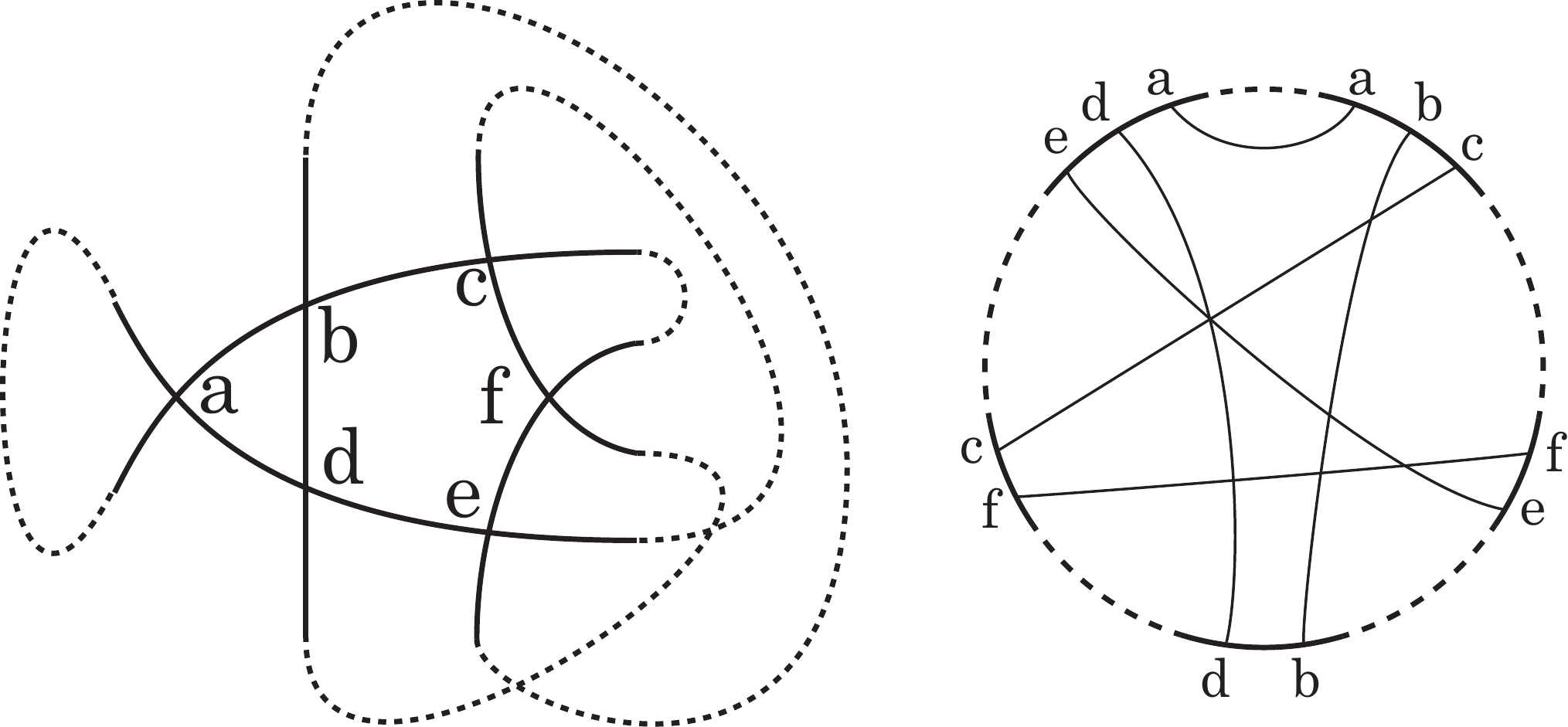} & \includegraphics[width=3.5cm]{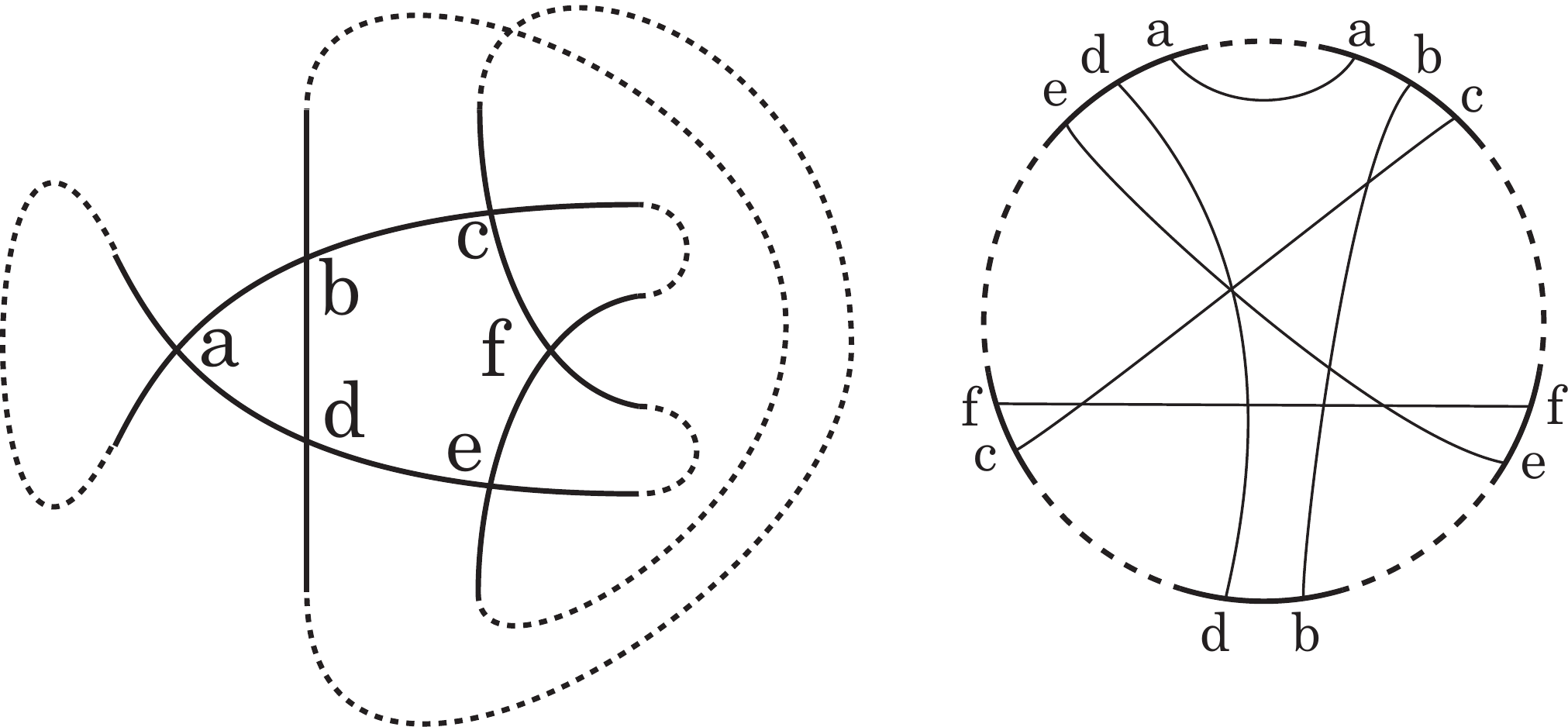} \\ \hline
\end{tabular}
\begin{tabular}{|c|c|} \hline
Case 25 & Case 25a \\ \hline 
\includegraphics[width=5.4cm]{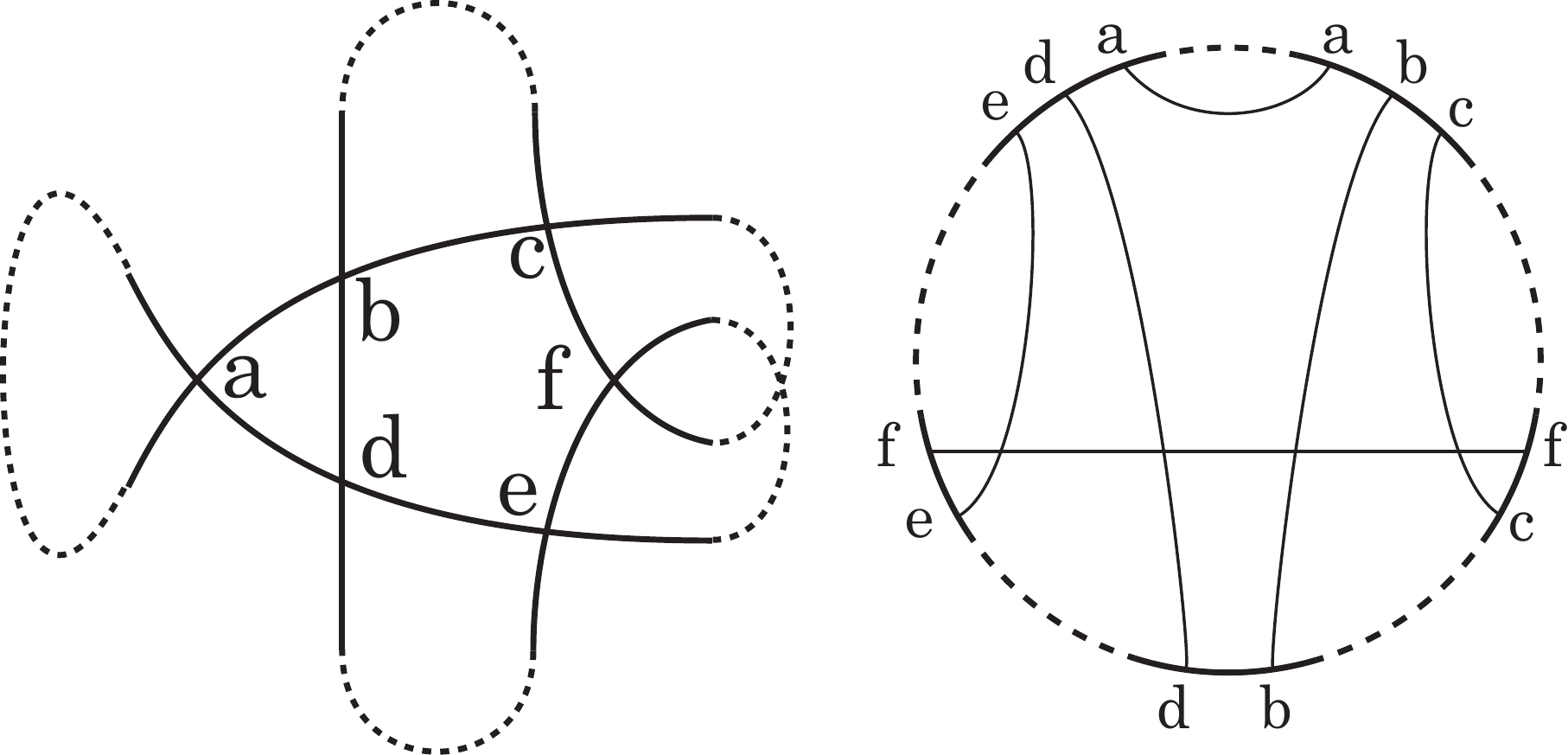} & \includegraphics[width=5.4cm]{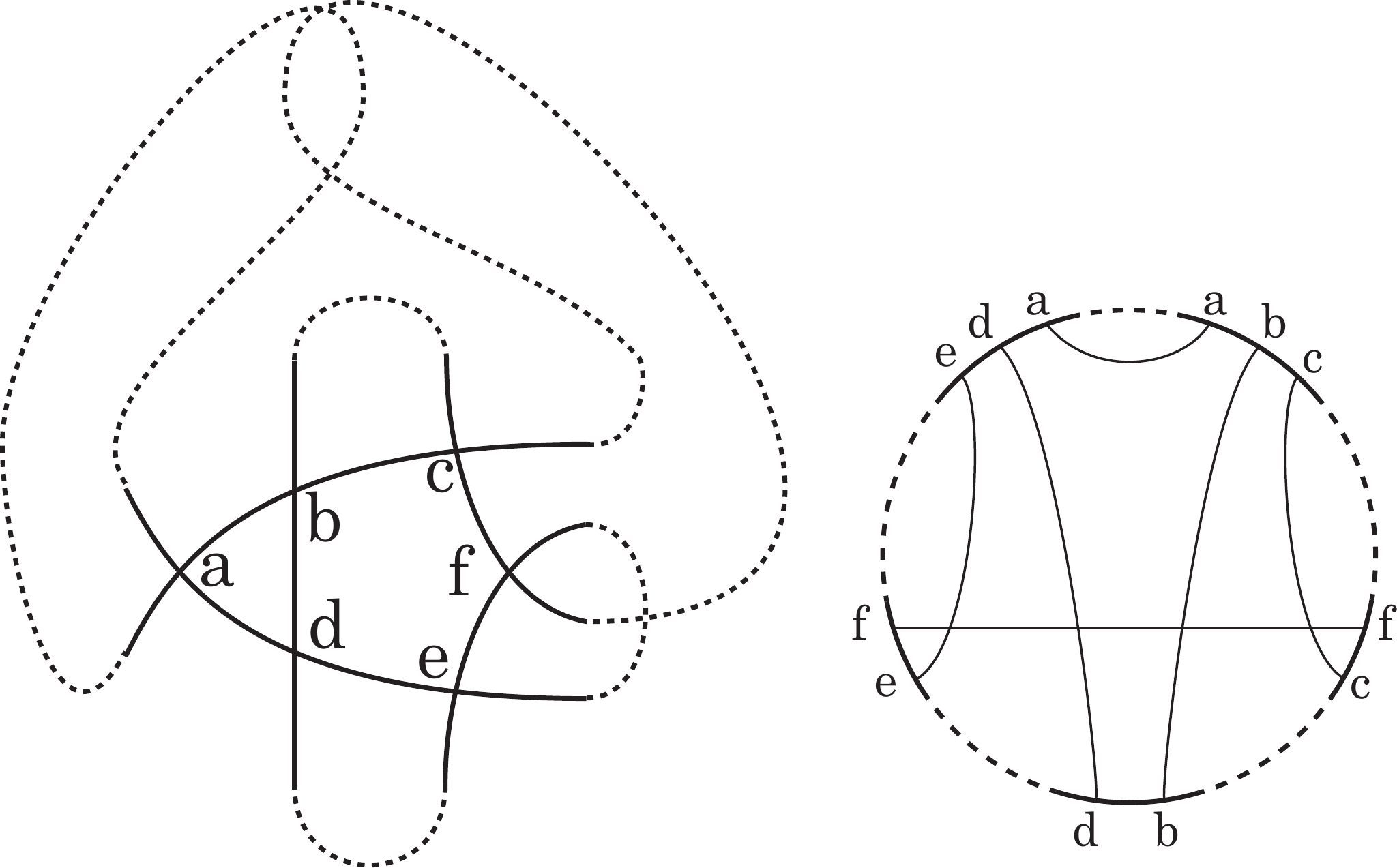} \\ \hline
Case 28 & Case 28a \\ \hline
\includegraphics[width=5.4cm]{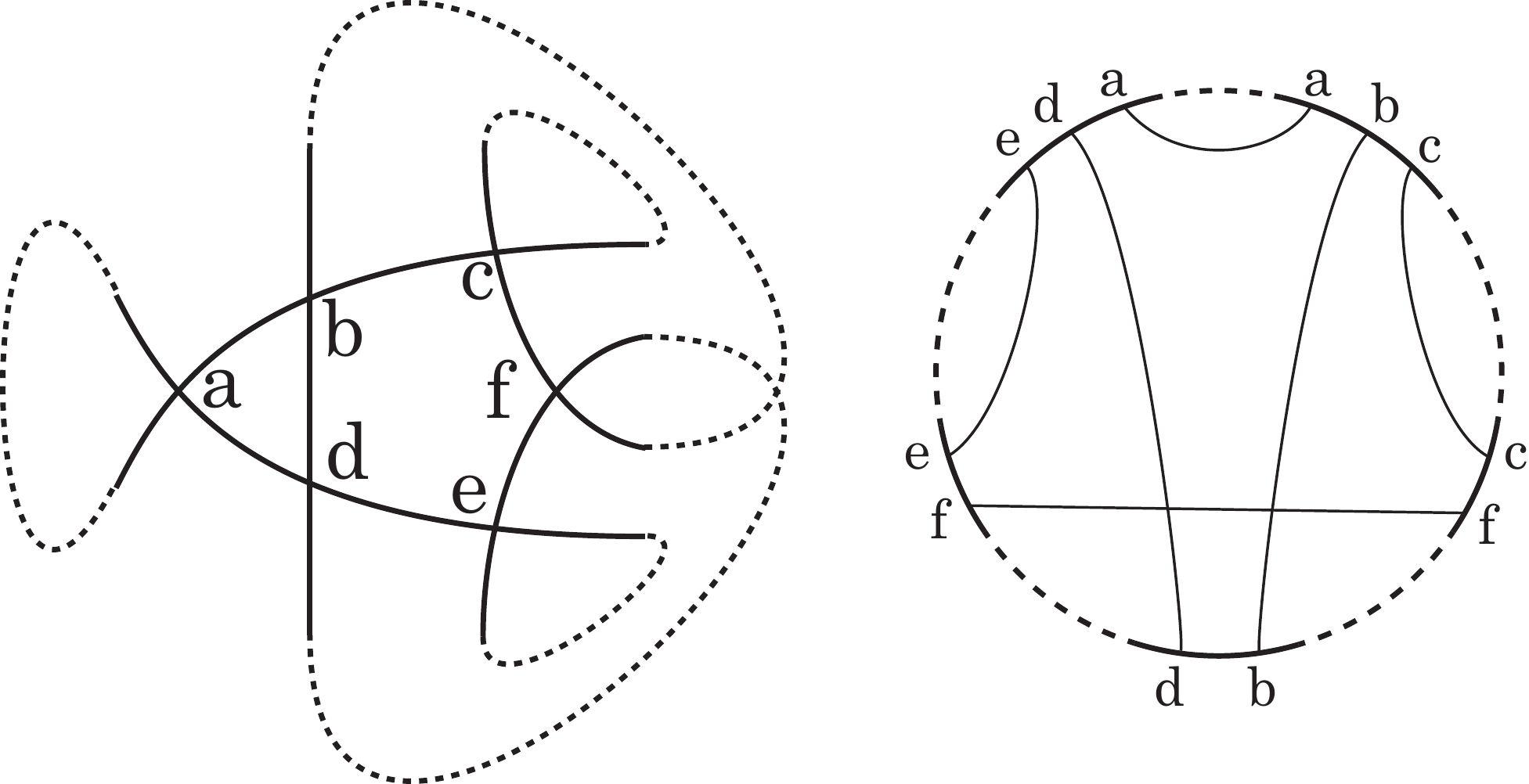} & \includegraphics[width=5.4cm]{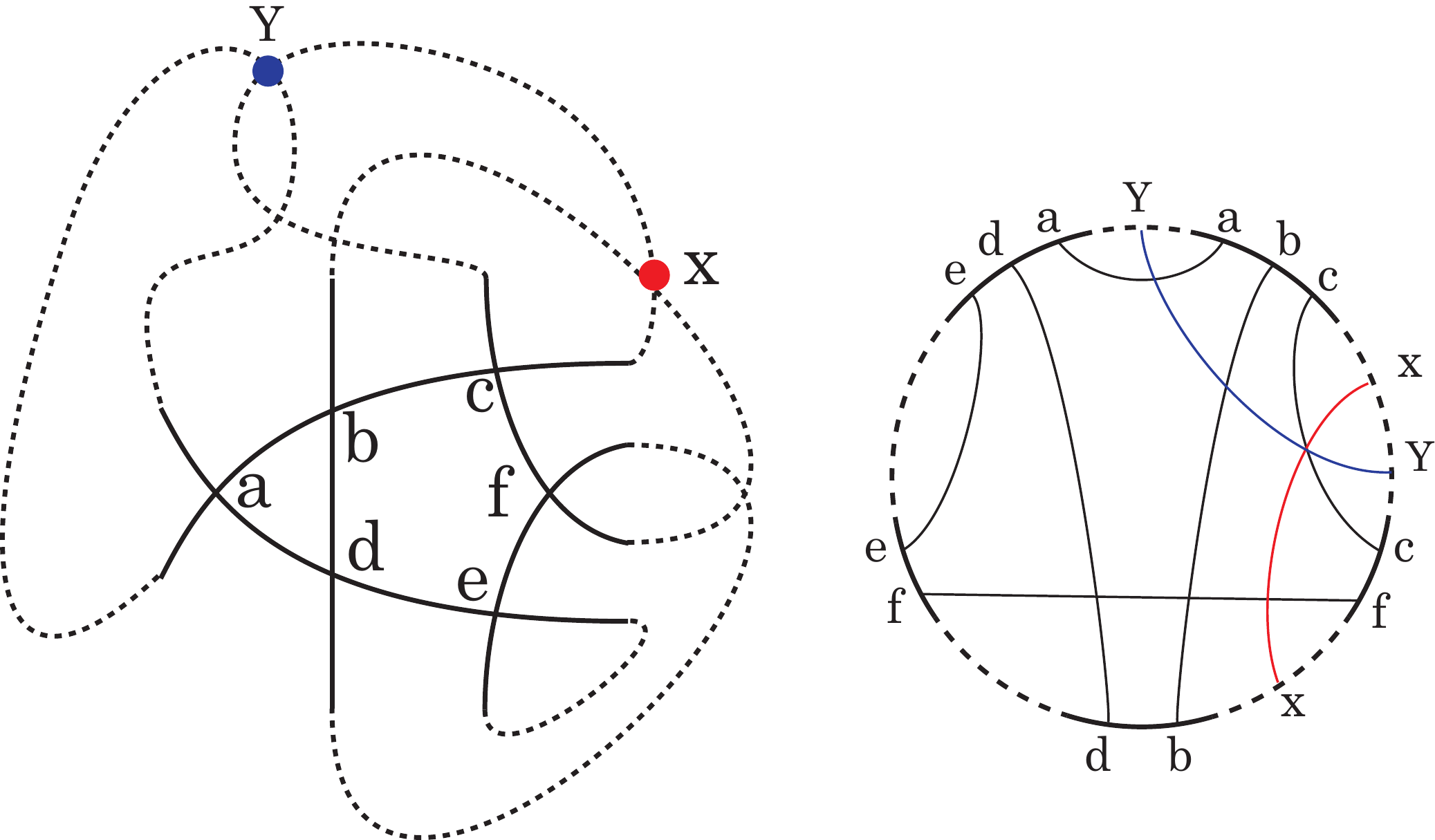} \\ \hline 
\end{tabular}
\end{table}

$\bullet$ {\bf{Case 28.}} See the bottom line of Fig.~\ref{cases25--32}.  
By the assumption, the considered knot projection $P$ is a prime knot projection with no $1$- or $2$-gons.  Thus, $P$ is reduced (Lemma \ref{lem_reduced}), and the dotted arc (a$\sim$a) intersects the other dotted arcs.  
\begin{itemize}
\item If (a$\sim$a) intersects (b$\sim$f), then $P$ has a triple chord in $CD_P$.
\item If (a$\sim$a) intersects (d$\sim$f), then $P$ has a triple chord in $CD_P$.  
\item If (a$\sim$a) intersects (c$\sim$c), but not (b$\sim$f) or (d$\sim$f), then $P$ has a triple chord in $CD_P$, as shown in the bottom line of Fig.~\ref{cases25--32}.  
\item If (a$\sim$a) intersects (e$\sim$e), but not (b$\sim$f) or (d$\sim$f), then $P$ has a triple chord in $CD_P$ by the same reasoning as that of (c$\sim$c) via their symmetry between (c$\sim$c) and (e$\sim$e).  
\end{itemize} 
These 32 cases complete the proof of Theorem \ref{main3}.  
\end{proof}

\section{Reductivity and Triple chords}\label{sec4.1}
This section mentions a relation between the reductivity of a knot projection and triple chords.  The reductivity is introduced by A. Shimizu \cite[Sec.~1]{shimizu2014} using local replacement $A^{-1}$ (also called $A^{-1}$ move in this paper) that appears in \cite{IS}.  
\begin{definition}
The local replacement $A^{-1}$ move at a double point is defined by Fig.~\ref{ych14}.  
\begin{figure}[h!]
\includegraphics[width=5cm]{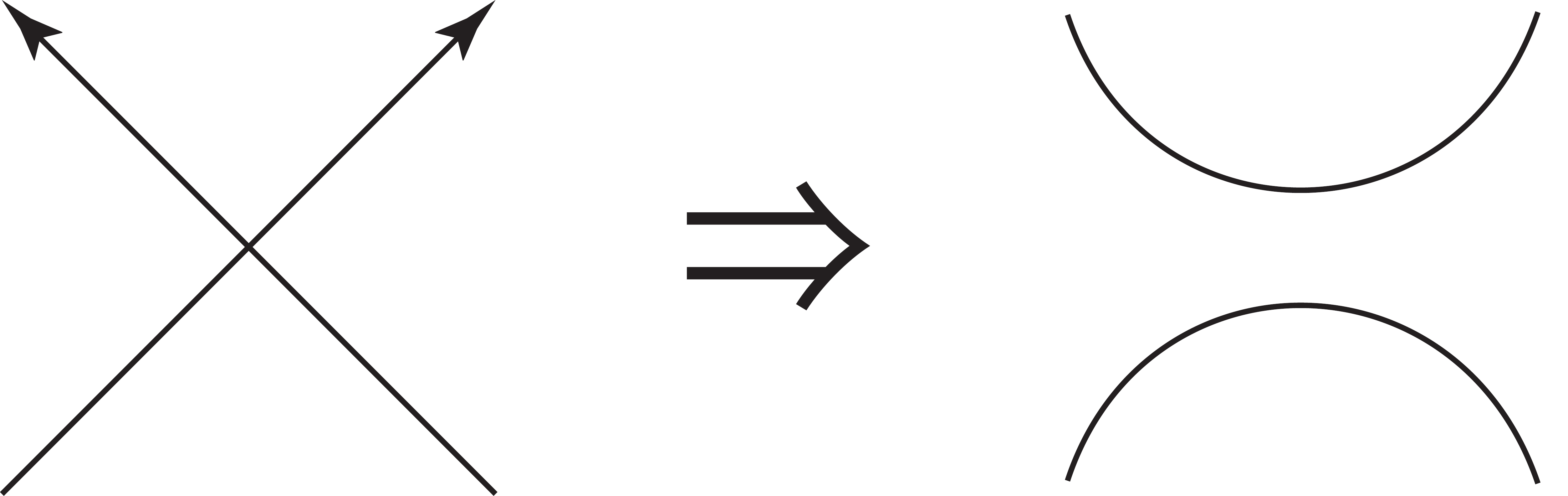}
\caption{$A^{-1}$ move.}\label{ych14}
\end{figure}
The reductivity $r(P)$ of a knot projection is the minimal number of $A^{-1}$ moves to produce a reducible knot projection.  
\end{definition}
\begin{remark}
It is worthwhile mentioning the following fact here.  Any reduced knot projections are related by a finite sequence consisting of $A^{-1}$ moves and inverse moves, where every knot projection appearing in each step in the sequence is reduced \cite[Corollary 1.2]{IS}.  Therefore, it is natural to consider the notion of reductivity \cite{shimizu2014}.  
\end{remark}
We characterize knot projections with $r(P)=1$.  
\begin{theorem}\label{kiyaku_thm}
For a reduced knot projection $P$, there exists a circle with two double points as shown in Fig.~\ref{ych15} if and only if $r(P)=1$.
\begin{figure}[h!]
\includegraphics[width=3cm]{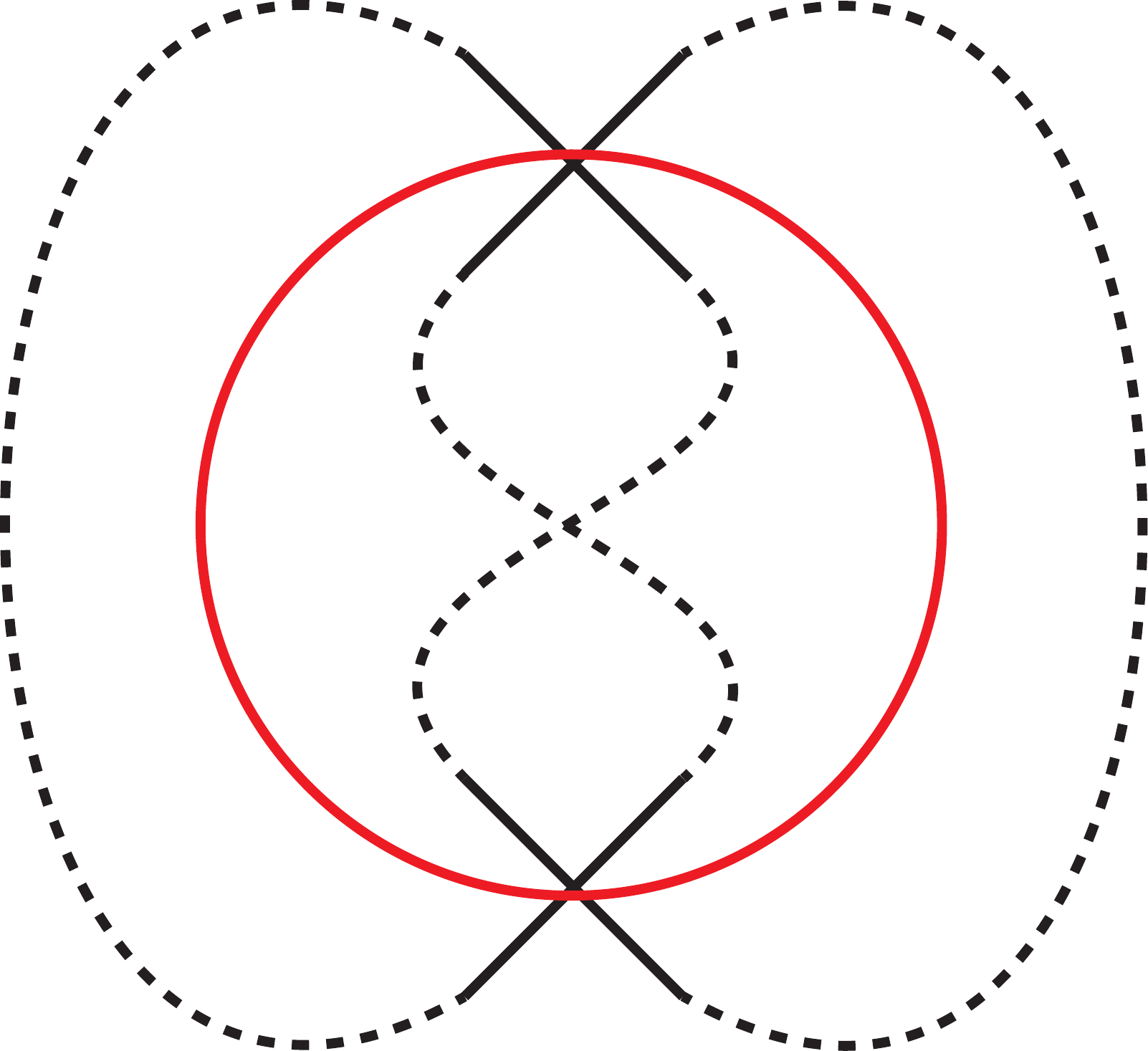}
\caption{Circle with two double points splitting $S^2$ into two disks.  Dotted arcs indicate the connections of branches.}\label{ych15}
\end{figure}
\end{theorem}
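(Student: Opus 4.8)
The plan is as follows. Since $P$ is reduced it is not reducible, so $r(P)\ge 1$, and the statement reduces to the equivalence: $P$ contains the configuration of Fig.~\ref{ych15} if and only if some single $A^{-1}$ move turns $P$ into a reducible knot projection. In both implications I would use two facts. First, the $A^{-1}$ move of Fig.~\ref{ych14} is a local replacement, supported in a small disk $N$ around the chosen double point, so $P$ and its image agree outside $N$. Second, by Fig.~\ref{ych4}, a crossing $d$ of a knot projection $Q$ is reducible precisely when there is an embedded circle $\gamma\subset S^2$ meeting $Q$ only at $d$ and having points of $Q$ on both of the disks bounded by $\gamma$.

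For the forward implication (configuration $\Rightarrow r(P)=1$), I would take the separating circle $\ell\subset P$ through the two double points $c_{1},c_{2}$ pictured in Fig.~\ref{ych15}, together with the two disks $D_{1},D_{2}$ it bounds and the branches of $P$ arranged in them as the dotted arcs indicate, and perform the $A^{-1}$ move at $c_{1}$ in the local position dictated by that arrangement. I would then exhibit explicitly, in the resulting projection $P'$, a circle running through the one remaining double point (either $c_{2}$, or the double point created by the move), meeting $P'$ nowhere else, and — because $\ell$ already separated $S^{2}$ — having points of $P'$ on both of its sides. This makes $P'$ reducible, so $r(P)\le 1$, hence $r(P)=1$. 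The only delicate point here is matching the $A^{-1}$ move to the dotted-arc pattern of Fig.~\ref{ych15}, and that is a single local verification.

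For the converse, suppose $r(P)=1$; fix a double point $c$ of $P$ for which the $A^{-1}$ move at $c$ yields a reducible projection $P'$, and fix a reducible crossing $d$ of $P'$ with a separating circle $\gamma$ for $(P',d)$ as above. The first step is to show $\gamma$ must meet the disk $N$ where the move happened: otherwise $P$ and $P'$ coincide outside $N$, the disk $N$ lies on one side of $\gamma$, one checks $\gamma$ then meets $P$ only at $d$ with points of $P$ on both sides, and so $\gamma$ already displays $d$ as a reducible crossing of $P$, contradicting that $P$ is reduced. The second step is to enumerate the finitely many ways $\gamma$ can cross $N$, and whether or not $d$ lies in $N$; in each case, reversing the $A^{-1}$ move turns the nugatory loop of $P'$ at $d$ into an embedded circle in $P$ that passes through exactly the two double points $c$ and $d$, separates $S^{2}$, and carries its branches in the pattern of Fig.~\ref{ych15} — here one must also verify that this circle is simple and that no third double point of $P$ lies on it. I expect this last step, controlling the position of $\gamma$ relative to $N$ and recognizing each resulting picture as the configuration of Fig.~\ref{ych15}, to be the main obstacle; the rest is bookkeeping around the two facts stated at the outset.
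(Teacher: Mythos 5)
Your proposal is correct and follows essentially the same route as the paper: the configuration-to-$r(P)=1$ direction is the paper's one-line argument (apply $A^{-1}$ at one of the two pictured double points, noting $r(P)\ge 1$ because $P$ is reduced), while for the converse the paper likewise pulls the reducibility circle of $P'$ back through the local disk of the move to obtain a simple circle meeting $P$ only at the moved crossing $a$ and the reducible crossing $b$. The case analysis you defer as the main obstacle is exactly what the paper does at that point: four connectivity patterns of the branches at $a$ and $b$, two of which are excluded because $P$ is an immersion of a single circle, the surviving two being precisely the configuration of Fig.~\ref{ych15}.
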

\begin{proof}
\begin{itemize}
\item ({\it{If part}}) Assume that $r(P)=1$.  Let $P'$ be a reduced knot projection obtained from $P$ by applying an $A^{-1}$ move at a double point, say $a$, of $P$, and $b$ a reducible crossing of $P'$.  Then it follows by definition that there exists a simple circle which intersects $P$ with $a$ and $b$ only.  There are four cases with respect to the connectivity among the paths at $a$ and $b$ as shown in Fig.~\ref{kiyaku2a}.  Since $P$ is an immersion of a single circle, we have the second and third cases.  
%First, $r(P) \neq 0$, and $P$ has at least two double points.  Second, after applying one $A^{-1}$ move $P \stackrel{A^{-1}}{\to} P'$ (the left-hand figure of Fig.~\ref{kiyaku1}), $P'$ must appear as shown in the right-hand figure of Fig.~\ref{kiyaku1}, where there exists a circle, the red circle shown in Fig.~\ref{kiyaku1} (cf.~Fig.~\ref{ych4}).  Note that Fig.~\ref{ych4} means that the circle must not intersect the dotted arcs and split $S^{2}$ into two distinct disks, $R_1$ and $R_2$, where one part of the knot projection belongs to $R_1$ and the other part belongs to $R_2$ (the right-hand figure of Fig.~\ref{kiyaku1}).  
%\begin{figure}[htbp]
%\includegraphics[width=8cm]{kiyaku1.pdf}
%\caption{$A^{-1}$ move (left) and the circle splitting $S^2$ into two disks (right).}\label{kiyaku1}
%\end{figure}
%Observe the first line of Fig.~\ref{kiyaku2a}.  When $P'$ becomes,  first, the right-hand figure of Fig.~\ref{kiyaku1}, we consider all the possibilities of connections of fragments of spherical curves by dotted arcs.  
\begin{figure}[htbp]
\includegraphics[width=8cm]{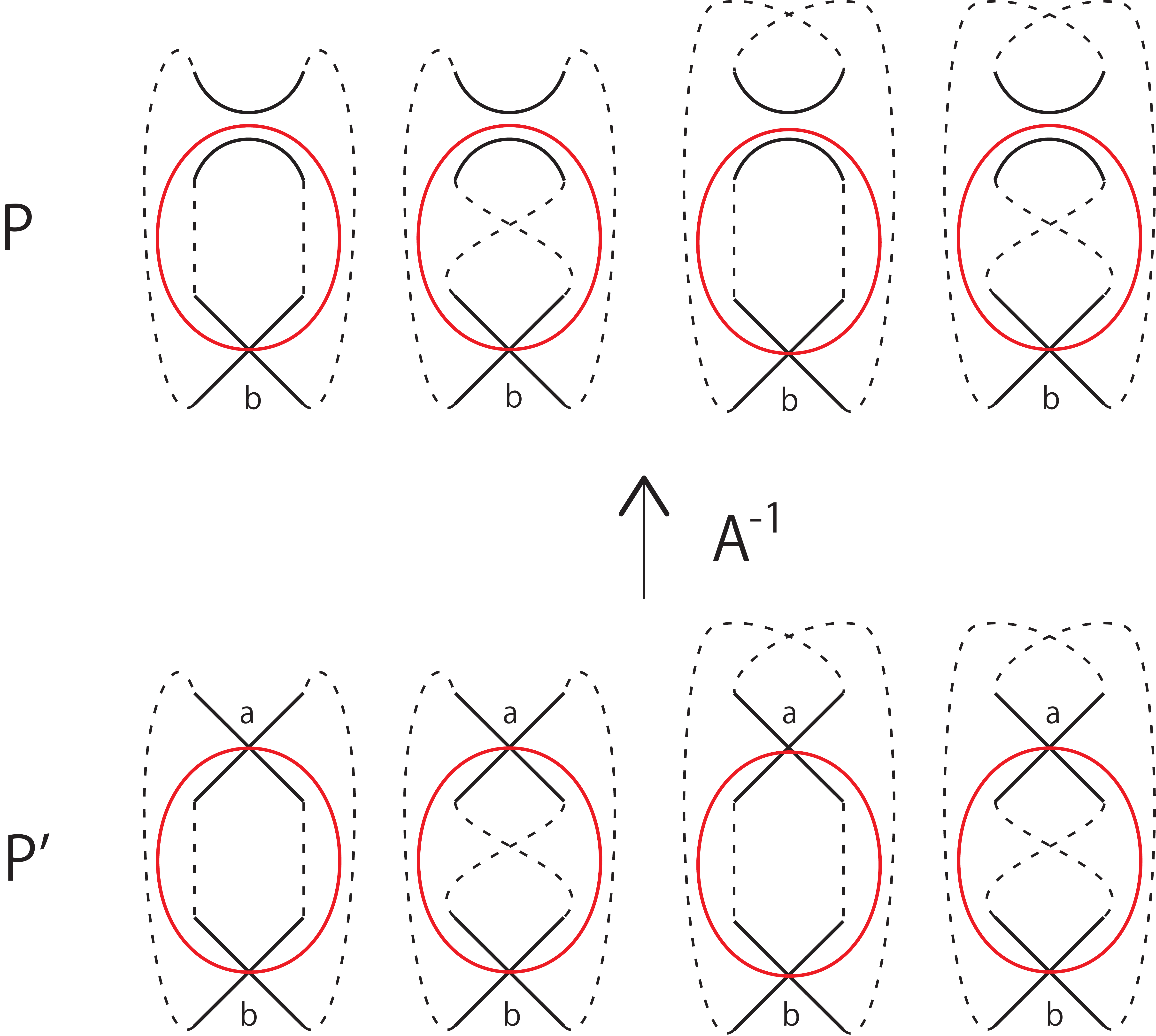}
\caption{All the possibilities of the distributions of arcs and the circle splitting $S^2$ into two disks after applying one $A^{-1}$ move (upper) and all the possibilities before applying $A^{-1}$ (lower).}\label{kiyaku2a}
\end{figure}
%Recovering the double point using the inverse of $A^{-1}$ (i.e., $A$), we have four cases, as in the lower line of Fig.~\ref{kiyaku2a}.  Since we are considering a knot projection that is one component of a spherical curve, we have only two cases in the middle of the lower line of Fig.~\ref{kiyaku2a}.  That completes the proof of the {\it{if part}}.  
\item ({\it{Only if part}}) 
Assume that a reduced knot projection $P$ has two double points as shown in Fig.~\ref{ych15}.  By applying an $A^{-1}$ move at one of the double points, we have a reducible knot projection.  
%Assume that a knot projection $P$ is reduced and a circle looks Fig.~\ref{ych15}.  When we apply an $A^{-1}$ move at an arbitrary double point of the two double points in Fig.~\ref{ych15}, the $A^{-1}$ move produces the right-hand figure of Fig.~\ref{kiyaku1}, and thus $r(P)=1$.  
%This completes the proof of the {\it{only if part}}.  
\end{itemize}
%{\color{red}{\UTF{0097}v\UTF{008F}\UTF{00D8}\UTF{0096}\UTF{0178}}}
\end{proof}
\begin{corollary}\label{cor1}
A knot projection $P$ with $r(P)=1$ has at least one triple chord in $CD_P$.  
\end{corollary}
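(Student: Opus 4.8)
The plan is to combine Theorem \ref{kiyaku_thm} with the analysis of the ``triple-chord–producing'' configurations already developed in Section \ref{sec_main3}. By Theorem \ref{kiyaku_thm}, a knot projection $P$ with $r(P)=1$ contains a simple circle meeting $P$ in exactly two double points, arranged as in Fig.~\ref{ych15}. First I would record the local picture this forces: the simple circle splits $S^2$ into two disks, and on each side the two branches entering the two double points are joined by dotted arcs in one of the two admissible ways (the second and third cases of Fig.~\ref{kiyaku2a}, since $P$ is the image of a single immersed circle). So up to the symmetry of the picture there are essentially one or two configurations to examine, each a genuine knot projection with a prescribed two-chord sub-diagram in $CD_P$ together with dotted arcs on both sides whose endpoints are fixed.

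Next I would argue that in each such configuration a triple chord is forced. The key local observation, used repeatedly in the proof of Theorem \ref{main3}, is that if $P$ is reduced then any dotted arc must intersect at least one other dotted arc — a dotted arc that met no other would produce a reducible crossing (or a $1$-gon), contradicting reducedness. Applying this to the dotted arcs in Fig.~\ref{ych15}: one of the two dotted arcs in one of the disks must cross another strand of $P$, and because the two double points of the fixed circle already contribute two of the three mutually intersecting chords of a candidate $\tr$, the extra crossing supplies the third chord connecting the two preimages appropriately. Concretely, I would draw $CD_P$ for the configuration(s), mark the two chords coming from the two double points on the splitting circle, observe that these two chords are linked in $CD_P$ in the standard ``$\otimes$'' pattern, and then note that the forced additional crossing yields a chord linked with both — i.e.\ exactly the sub-diagram $\tr$. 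This is the same mechanism as in Cases A, D, 10, 25, 28 of Section \ref{sec_main3}, where a forced intersection of a dotted arc upgrades a two-chord configuration to a triple chord.

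The main obstacle I anticipate is bookkeeping rather than conceptual: one must check that \emph{every} way of routing the forced extra dotted arc still yields $\tr$, i.e.\ that there is no escape case in which the new crossing fails to link with both original chords. I would handle this exactly as in the body of the paper — enumerate the finitely many combinatorial possibilities for which strand the extra dotted arc meets, and in each case exhibit the triple chord (or reduce to a sub-case already treated). Since the splitting circle of Fig.~\ref{ych15} is very rigid, this enumeration should be short: at most a handful of cases, each dispatched by a single figure of $CD_P$. I would therefore present the corollary's proof as: invoke Theorem \ref{kiyaku_thm} to obtain Fig.~\ref{ych15}; use reducedness to force an intersecting dotted arc; and read off $\tr$ from the resulting chord diagram in each of the (few) cases.
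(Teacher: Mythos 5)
Your top-level plan coincides with the paper's: apply Theorem \ref{kiyaku_thm} to get the configuration of Fig.~\ref{ych15}, observe that the chords of the two double points $a$ and $b$ on the splitting circle are linked in $CD_P$, and then produce a third double point whose chord links both. However, the mechanism you propose for producing that third double point has a genuine flaw. Your ``key local observation'' --- that reducedness forces \emph{any} dotted arc to intersect some other dotted arc, since otherwise one gets a reducible crossing or a $1$-gon --- is not true for a general dotted arc. In the proofs of Section \ref{sec_main3} that argument is only ever applied to loop-type arcs such as (a$\sim$a), i.e.\ arcs returning to the same double point, where non-intersection does indeed create a reducible crossing; an embedded arc joining two \emph{distinct} double points (and the relevant arcs in Fig.~\ref{ych15} join $a$ to $b$) can avoid all other arcs without making $P$ reducible. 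Moreover, Corollary \ref{cor1} assumes neither primeness nor the absence of $2$-gons, so you cannot import the Section \ref{sec_main3} case machinery wholesale; the only hypothesis available is that $P$ is reduced (which follows from $r(P)=1$, since a reducible projection has $r(P)=0$).

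What actually forces the third crossing --- and what the paper uses --- is not reducedness but the connection pattern itself: in the surviving (second and third) cases of Fig.~\ref{kiyaku2a}, the ends of the two branches of $P$ entering the disk bounded by the red circle interleave along its boundary, so by a Jordan-curve argument those two strands must cross at some double point $x$ inside the region. The paper's proof is then immediate: take such an $x$, and the chords of $a$, $b$, $x$ pairwise link, giving the triple chord in $CD_P$ --- no enumeration of routings is needed. Note also that one must take $x$ to be a crossing \emph{between the two different strands} inside the disk; a self-crossing of a single strand inside the region has both preimages on the same side of the chords of $a$ and $b$ and does not complete a triple chord, which is another reason a blanket ``some forced intersection exists'' argument is not enough without the interleaving.
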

\begin{proof}
If $P$ satisfies $r(P)=1$, then it has two double points, say $a$ and $b$, as shown in Fig.~\ref{ych15}.  Let $x$ be a double point of $P$ in the region surrounded by the red circle.  Then the double points $a$, $b$, and $x$ gives a triple chord in $CD_P$.  
%By Theorem \ref{kiyaku_thm}, if a knot projection satisfies $r(P)=1$, then $P$ looks like Fig.~\ref{ych15}, and there exists at least one intersection point $x$ between two dotted arcs within a red circle such as that in the figure.  Draw the chord diagram corresponding to Fig.~\ref{ych15} and put one chord corresponding to $x$.  Therefore, we can find a triple chord in $CD_P$.  This completes the proof.
%{\color{red}{\UTF{0097}v\UTF{008F}\UTF{00D8}\UTF{0096}\UTF{0178}}}
\end{proof}

\noindent{\bf{Acknowledgments.}}  The authors would like to thank Professor Kouki Taniyama for his helpful comments.

\end{document}